\numberwithin{equation}{section}
\let\@fnsymbol\@arabic
\theoremstyle{plain}
\newtheorem{thm}{Theorem}[section]
\newtheorem{lemma}[thm]{Lemma}
\newtheorem{prop}[thm]{Proposition}
\newtheorem{problem}[thm]{Problem}
\theoremstyle{definition}
\newtheorem{defin}[thm]{Definition}
\newtheorem{rmk}[thm]{Remark}
\theoremstyle{remark}
\newcommand{\BV}{\mathrm{BV}}
\newcommand{\foraa}{\text{for a.a.\ }}
\newcommand{\pairing}[4]{ \sideset{_{#1 }}{_{ #2}}  {\mathop{\left\langle #3 , #4  \right\rangle}}}
\newcommand{\testw}{\eta}
\newcommand{\bigF}{f}
\newcommand{\aein}{\text{a.e.\ in }}
\newcommand{\eps}{\varepsilon}
\newcommand{\om}{\omega}
\newcommand{\kk}{_n^k}
\newcommand{\kkm}{_{n,M}^k}
\newcommand{\km}{_n^{k-1}}
\newcommand{\kmm}{_n^{k-2}}
\newcommand{\wto}{\rightharpoonup}
\newcommand{\wtos}{\mathrel{\mathop{\rightharpoonup}\limits^*}}
\newcommand{\E}{\mathcal{E}}
\newcommand{\calE}{\mathcal{E}}
\newcommand{\calG}{\mathcal{G}}
\newcommand{\calH}{\mathcal{H}}
\newcommand{\calL}{\mathcal{L}}
\newcommand{\calR}{\mathcal{R}}
\newcommand{\calC}{\mathcal{C}}
\newcommand{\calX}{\mathcal{X}}
\newcommand{\bdm}{\begin{displaymath}}
\newcommand{\edm}{\end{displaymath}}
\newcommand{\be}{\begin{equation}}
\newcommand{\ee}{\end{equation}}
\newcommand{\bes}{\begin{equation*}}
\newcommand{\ees}{\end{equation*}}
\newcommand{\bea}{\begin{eqnarray}}
\newcommand{\eea}{\end{eqnarray}}
\newcommand{\beas}{\begin{eqnarray*}}
\newcommand{\eeas}{\end{eqnarray*}}
\mathchardef\emptyset="001F
\newcommand{\N}{\mathbb{N}}
\newcommand{\R}{\mathbb{R}}
\newcommand{\Rsym}{ {\R^{d \times d}_{\textnormal{sym}} } }
\newcommand{\Tsym}{ {\R^{d \times d \times d \times d}_{\textnormal{sym}} } }
\newcommand{\KK}{\mathbb{K}}
\newcommand{\A}{\mathbb{A}}
\newcommand{\BB}{\mathbb{B}}
\newcommand{\DD}{\mathbb{D}}
\newcommand{\CC}{\mathbb{C}}
\newcommand{\Om}{\Omega}
\newcommand{\w}{\theta}
\newcommand{\calZ}{\mathcal{Z}}
\newcommand{\baru}{\overline u_n}
\newcommand{\barv}{\overline v_n}
\newcommand{\barw}{\overline \theta_n}
\newcommand{\barz}{\overline z_n}
\newcommand{\barth}{\overline \theta_n}
\newcommand{\bartheta}{\overline \theta_n}
\newcommand{\bartau}{\overline \tau_n}
\newcommand{\bareta}{\overline \eta_n}
\newcommand{\underu}{\underline u_n}
\newcommand{\underz}{\underline z_n}
\newcommand{\underth}{\underline\theta_n}
\newcommand{\undertau}{\underline \tau_n}
\newcommand{\barh}{\overline{\hs}_n}
\newcommand{\barbigF}{\overline \bigF_n}
\newcommand{\barH}{\overline{\hv}_n}
\newcommand{\acca}{\mathcal{H}}
\newcommand{\integ}[3]{\int_{#1} #2 \, \d #3}
\newcommand{\integlin}[4]{\int_{#1}^{#2} #3 \, \d #4}
\newcommand{\argmin}{\mathop{\rm argmin}}
\renewcommand{\d}{\mathrm{d}}
\newcommand{\dd}{\,\mathrm{d}}
\renewcommand{\div}[1]{\mathop\mathrm{div}\,#1}
\renewcommand{\mod}[1]{\left|#1\right|}
\newcommand{\norm}[2]{\left\|#1\right\|_{#2}}
\newcommand{\ps}{\cdot}
\newcommand{\psm}{:}
\newcommand{\Xteta}{X}
\newcommand{\dom}{\mathrm{dom}}
\newcommand{\weaksto}{\wtos}
\newcommand{\rmC}{\mathrm{C}}
\newcommand{\weakto}{\rightharpoonup}
\newcommand{\fv}{f_\mathrm{V}}
\newcommand{\fs}{f_\mathrm{S}}
\newcommand{\hv}{H}
\newcommand{\hs}{h}
\newcommand{\taun}{\tau_n}
\newcommand{\fte}{\eps t}
\newcommand{\ures}{u_\eps}
\newcommand{\zres}{z_\eps}
\newcommand{\thres}{\theta_\eps}
\newcommand{\dures}{\dot u_\eps}
\newcommand{\dzres}{\dot z_\eps}
\newcommand{\dthres}{\dot\theta_\eps}
\newcommand{\uresn}{u_{\eps_n}}
\newcommand{\zresn}{z_{\eps_n}}
\newcommand{\thresn}{\theta_{\eps_n}}
\newcommand{\duresn}{\dot u_{\eps_n}}
\newcommand{\fve}{f_{\mathrm{V},\eps}}
\newcommand{\hve}{H_\eps}
\newcommand{\hse}{h_\eps}
\newcommand{\bigFe}{\bigF_\eps}
\newcommand{\hvue}{H^\eps}
\newcommand{\hsue}{h^\eps}
\begin{document}
\hyphenation{quasi-stat-ic}
\nocite{*}
\title{Rate-independent damage in thermo-viscoelastic materials \\ with inertia }
\author{
Giuliano Lazzaroni
\thanks{DMA, Universit\`{a} degli Studi di Napoli Federico II, via Cintia, Monte S.\ Angelo, 80126 Napoli, Italy.
Email: {\ttfamily giuliano.lazzaroni@sissa.it}}
\and
Riccarda Rossi
\thanks{DIMI, Universit\`{a} degli Studi di Brescia, Via Valotti 9, 25133 Brescia, 
Italy.
Email: {\ttfamily riccarda.rossi@unibs.it} } 
\and
Marita Thomas
\thanks{
Weierstrass Institute for Applied Analysis and
Stochastics, Mohrenstr.~39, 10117 Berlin, Germany.
Email: {\ttfamily marita.thomas@wias-berlin.de}
}
\and
Rodica Toader
\thanks{DMIF, Universit\`{a} degli Studi di Udine, Via delle Scienze 206, 33100 Udine, Italy.
Email: {\ttfamily rodica.toader@uniud.it}
}
}
\date{}
\maketitle
\begin{abstract}
We present a model for rate-independent, unidirectional, partial damage in visco-elastic
materials with inertia and thermal effects.
The damage process is modeled by means of an internal variable, governed by a rate-independent flow rule.
The heat equation and the momentum balance for the displacements are coupled in a highly nonlinear way.
Our assumptions on the corresponding energy functional also comprise the case of the
Ambrosio-Tortorelli phase-field model (without passage to the brittle limit).
We discuss a suitable weak formulation and prove an existence theorem obtained with the aid of a  (partially)
decoupled time-discrete scheme
and variational convergence methods.
We also carry out the asymptotic analysis for vanishing viscosity and inertia and obtain
a fully rate-independent limit model
for displacements and damage, which is independent of temperature.
\par
\bigskip
\noindent {\bf 2010 MSC:}
35Q74, 
74H20, 
74R05, 
74C05, 
74F05, 
%
\par
\medskip
\noindent {\bf Keywords:} Partial damage, Rate-independent systems,
Elastodynamics, Phase-field models,   Heat equation, 
Energetic
solutions, Local solutions.
\end{abstract}
%
%
\section{Introduction}
Gradient damage models have been extensively studied in recent years,
in particular in order to understand the behavior of brittle or quasi-brittle materials.
In this paper we present a model for rate-independent, unidirectional, partial damage
in visco-elastic materials with inertia and thermal effects.
Thus we deal with a PDE system composed of the (damped) equation of elastodynamics,
a rate-independent flow rule for the damage variable,
and the heat equation, coupled in a highly nonlinear way.
We prove an existence result basing on time-discretization
and variational convergence methods, where
the analytical difficulties arise from the interaction of
rate-independent and rate-dependent phenomena.
We study also the relationship of our model with a
fully rate-independent system by time rescaling.
\par
Following Fr\'emond's approach \cite{Fre02NST}, damage is
represented through an internal variable, in the context of
generalized standard materials \cite{HalNgu75MSG}. The damage
process is unidirectional, meaning that no healing is allowed; we do
not use the term ``irreversibility'' to avoid confusion with
thermodynamical notions. In our model the evolution of this variable
is rate-independent: this choice is due to the consideration that,
to damage a certain portion of the material, one needs a quantity of
energy that is independent of the  rate of damage,  see e.g.\
\cite{KoMiRo006RIAD}. Rate-independent damage has been widely
explored over the last years, cf.\ e.g.\ 
\cite{MiRou06,FraGar,BouMiRou07,ThoMie09DNEM,GarLar,
Thom11QEBV,FiKnSt10YMQS,KnRoZa11}. 
 For different studies on rate-dependent damage we refer to e.g.\ \cite{FreNed96DGDP,BS,bss}
in the isothermal case and \cite{BB,RR1,RoRoESTC14,HK} for temperature-dependent systems.
\par
Energy can be dissipated not only  by  damage growth, but also by
viscosity and heat, both phenomena having a rate-dependent nature.
Rate-independent processes coupled with viscosity, inertia, and also
temperature have first been analyzed in the two pioneering papers
\cite{Roub09RIPV,Roub10TRIP},  cf.\ also  \cite[Chap.\ 5]{MR3380972}.   
Under the assumption of small strains,
the momentum equation is linearized and is formulated using
Kelvin-Voigt rheology and inertia. The nonlinear heat equation is
coupled with the momentum balance through a thermal expansion term:
this reflects the fact that temperature changes produce additional
stresses. Here, we extend Roub\'i\v cek's ansatz for the
temperature-dependent setting to a unidirectional process, thus
dealing with a discontinuous rate-independent dissipation potential,
cf.\ \eqref{dissip-potential} below. Existence results for an
Ambrosio-Tortorelli-type  system with unidirectional damage,
inertia, and damping were already provided in \cite{LarOrtSue} in
the isothermal case.
\par
\paragraph{The PDE system.}
More precisely, we address the analysis of the following PDE system:
\begin{subequations}
\label{ourPDE}
\begin{alignat}{3}
\label{eq:u}
&&&\rho \ddot{u}-\div\left(\DD(z,\theta)e(\dot u) + \CC(z)e( u) -\theta\,\BB \right)=\fv \quad
& \text{in } (0,T)\times\Omega \,,
\\
\label{eq:z}
&&&
\partial \mathrm{R}_1 (\dot z)
+ \mathrm{D}_z G(z,\nabla z) - \div(\mathrm{D}_\xi G(z,\nabla z) )
+\tfrac12 \CC'(z)e(u)\psm e(u)  \ni 0 \quad & \text{in } (0,T)\times\Omega \,,
\\
\label{eq:theta} &&&
\dot\theta-\div(\KK(z,\theta)\nabla\theta)=\mathrm{R}_1(\dot z)+
\DD(z,\theta ) e(\dot u)\psm e(\dot u) - \theta\,\BB\psm e(\dot u)
+ \hv \quad & \text{in } (0,T)\times\Omega \,,
\end{alignat}
\end{subequations}
where the unknowns are the displacement vector field $u$, the damage
variable $z$, and the absolute temperature $\theta$, all the three
being functions of the time $t\in(0,T)$ and of the position $x$ in
the reference configuration of a material $\Om$, a bounded subset of
$\R^d$, with $d\in\{2,3\}$. Here,
$e(u):=\frac{1}{2}(\nabla u+\nabla u^\top)$ denotes the  linearized   strain
tensor.
\par
In \eqref{eq:u}, the constant $\rho>0$ is the mass density. Moreover,
$\DD(z,\theta)$ and $\CC(z)$ are the viscous and the elastic stress tensors
and are both bounded, symmetric, and positive definite   on symmetric matrices,   uniformly in $z$ and $\theta$.
This reflects two hypotheses of the model, motivated by analytical reasons:
first, we cannot renounce the presence of some damping in the  momentum balance; 
second, we restrict ourselves to the case of partial damage,
assuming that even in its most damaged state the material keeps some elastic properties.
In order to account for the phenomenological effect that an increase of damage reduces
the stored elastic energy, see e.g.\ \cite{LemDes05EDM}, it is assumed that
the elastic tensor $\CC(z)$ depends monotonically on
the internal variable $z$, cf.\ also \cite{FreNed96DGDP,Fre02NST,MieHofWel}.
\par
According to the rate-independent and unidirectional nature of the damage process,
$\mathrm{R}_1$ is a 1-homogeneous dissipation potential of the form
\begin{equation}
\label{dissip-potential}
\mathrm{R}_1 (v):=
\begin{cases}
 |v| & \text{if } v \leq 0 \,, 
\\
 +\infty &  \text{otherwise,}
\end{cases}
\end{equation}
which enforces the internal variable $z$ to be nonincreasing in time.
Indeed, we assume that $z=1$ marks the sound material and $z=0$ the most damaged state.
\par
The gradient term $G(z,\nabla z)$ is needed to regularize damage;
in particular, this term also  allows for a nonconvex dependence on $z$
as in  many  phase-field models. 
Moreover, for suitable choices we retrieve
the Modica-Mortola term appearing in
the Ambrosio-Tortorelli functional, see Remark \ref{rmk:a-t}.
The flow rule \eqref{eq:z}
is given as a subdifferential inclusion, where  $\partial$
denotes the subdifferential in the sense of convex analysis 
of $\mathrm{R}_1$ while $\mathrm{D}_z$ and $\mathrm{D}_\xi$ stand for 
the G\^ateaux derivatives of $G(\cdot,\xi)$ and $G(z,\cdot)$, respectively. 
This is a compact way to write a (semi)-stability condition of Kuhn-Tucker type.
\par
The term $\theta\,\BB$, where $\BB$ is a fixed symmetric matrix,
derives from thermodynamical considerations and
is a coupling term between the momentum \eqref{eq:u} and the heat equation \eqref{eq:theta}.
The information on the heat conductivity of the material is contained in the symmetric matrix $\KK(z,\theta)$.
We suppose that $\KK(z,\cdot)$ satisfies subquadratic growth conditions uniformly in $z$,
which are borrowed from  \cite{RoRoESTC14} and  which are  in the same spirit as in  \cite{FePeRo09ESPT}.
These conditions are fundamental
in the proof of some a priori estimates; see the discussion below \eqref{freeEn}
for appropriate examples from materials science.
\par
All the aforementioned quantities are independent of time and space,
whilst the external force $\fv$ and the heat source $H$ are functions of both.
The system is complemented with the natural boundary conditions
\begin{subequations}
\label{bc}
\begin{alignat}{3}
\label{bc-a}
\left(\DD(z,\theta)e(\dot u)+ \CC(z)e(u)- \theta\,\BB  \right) \nu &= \fs
&&\quad\text{on } (0,T)\times\partial_{\mathrm{N}}\Om \,,\\
\label{bc-b}
u&=0 &&\quad\text{on } (0,T)\times\partial_\mathrm{D}\Om \,,\\
\label{bc-c}
\mathrm{D}_\xi G(z,\nabla z) \, \nu&=0 &&\quad\text{on } (0,T)\times\partial\Om\,,\\
\label{bc-d}
\KK(z,\theta)\nabla\theta\ps\nu&=\hs &&\quad\text{on } (0,T)\times\partial\Om \,,
\end{alignat}
\end{subequations}
where $\partial_\mathrm{D}\Om$ and $\partial_{\mathrm{N}}\Om:=\partial\Om\backslash\partial_{\mathrm{D}}\Om$ are
the Dirichlet and the Neumann part of the boundary,
$\nu$ denotes the outer unit normal vector to $\partial\Om$,
and $\fs$ and $h$ are prescribed external data depending on time and space.
As for the Dirichlet data, we restrict to homogeneous boundary conditions,
see Remark \ref{rmk:time-dependent-loading} for a discussion on this choice.
Moreover, Cauchy conditions are given on $u(0)$, $\dot u(0)$, $z(0)$, and $\theta(0)$.
We refer to Section \ref{s:ass} for the precise assumptions on the domain and the given data.
\par
\paragraph{The energetic formulation.}
Due to the rate-independent character of the flow rule \eqref{eq:z}
and to the nonconvexity of the underlying energy, proving the
existence of solutions to the PDE system \eqref{ourPDE}  in its
pointwise form seems to be out of reach. As customary in
rate-independent processes, we will resort to a weak solvability
concept, based on the notion of \emph{energetic solution}, see
\cite{Miel05ERIS} and references therein. For fully rate-independent
systems, governed (in the classical PDE-formulation) by the
static momentum balance for $u$ and the rate-independent flow rule
for $z$, the energetic formulation consists of two properties:
\begin{compactitem}
\item \emph{global stability:} at each time $t$ the configuration $(u(t),z(t))$ is a global minimizer
of the sum of energy and dissipation;
\item \emph{energy-dissipation balance:}
the sum of the energy at time $t$ and of the dissipated energy in $[0,t]$ 
equals the initial energy plus the work of external loadings.
\end{compactitem}
Over the last decade, this approach has been extensively applied to several mechanical problems
and in particular to fracture, see e.g.\ \cite{FraLar,DFT05,DMLaz}, and damage,
see e.g.\ \cite{MiRou06,ThoMie09DNEM,Thom11QEBV}.
\par
However, in a context where other rate-dependent phenomena are present,
the global stability condition is too restrictive.
Following \cite{Roub09RIPV,Roub10TRIP} we will replace it
with a \emph{semistability} condition, where
the sum of energy and dissipation is minimized with respect to the internal variable $z$ only,
while the displacement $u(t)$ is kept fixed,
see also \cite{RosRou,BarRou,RouNCTVE}.
Accordingly, we will weakly formulate system \eqref{ourPDE} by means of
\begin{compactitem}
\item semistability,
\item the (dynamic) momentum equation in a weak sense,
\item a suitable energy-dissipation balance,
\item the heat equation in a weak sense.
\end{compactitem}
\par
\paragraph{Existence result.}
Theorem \ref{thm:main} states the existence of energetic solutions
to the initial-boundary value problem for system \eqref{ourPDE}.
For the proof we rely on a well-established method for showing
existence for rate-independent processes \cite{Miel05ERIS},
adjusted to the coupling with viscosity, inertia, and temperature in  \cite{Roub10TRIP}.
Although we follow the approach of the latter paper,
let us point out that the results therein do not account for some properties of our model, namely,
\begin{compactitem}
\item the unidirectionality of damage, see \eqref{dissip-potential},
\item the dependence of the viscous tensor $\DD(z,\theta)$ on damage and temperature.
\end{compactitem}
These features are important for the modeling of volume-damage, as well as for
the phase-field approximation of fracture and surface damage models,
see also Remark \ref{rmk:a-t}, and cause some analytical difficulties.
\par
As in many works on rate-independent systems,
our existence proof is based on time-discretization and approximation by means of solutions
to incremental problems. Differently from \cite{Roub10TRIP}, in our discrete scheme
the approximate flow rule is decoupled from the other two equations,
which may produce more efficient numerical simulations.
Moreover, the assumption of a constant heat capacity allows us to avoid a so-called
enthalpy transformation and, together with the subquadratic growth of the heat conductivity,
to deduce a priori estimates
and the positivity of the temperature by carefully adapting the methods developed in
\cite{FePeRo09ESPT,RoRoESTC14}.
\par
When 
 taking the time discrete-to-continuous limit, 
we first pass 
to the limit in the weak momentum balance.
From this we also deduce a (time-continuous)
mechanical energy \emph{inequality} 
by lower semicontinuity arguments.
Next we pass to the limit in the
semistability inequality using so-called mutual
recovery sequences. As a further step we verify that 
the mechanical energy balance is satisfied as an \emph{equality}:
this follows 
from the momentum balance and the semistability so far
obtained. This result allows  us  to conclude the convergence of the
viscous dissipation terms, which, in turn, is crucial for the limit
passage in the heat equation.
See Sections \ref{Mombal}--\ref{EEHeat}.  
\paragraph{Some remarks on the thermal properties of system \eqref{ourPDE}
and its applicability.}
For the thermodynamical derivation of the PDE system \eqref{ourPDE}
one may follow the thermomechanical modeling
by Fr\'emond in \cite[Chapter 12]{Fre02NST} or
Roub\'i\v cek in \cite{Roub10TRIP}. In particular, the free energy density
associated with \eqref{ourPDE} is given by
\begin{equation}
\label{freeEn}
F(e(u),z,\nabla z,\theta):=\tfrac{1}{2}\CC(z)e(u):e(u)+G(z,\nabla z)+\varphi(\theta)-\theta\,\BB:e(u)\,,
\end{equation}
which leads to the entropy density $S$ and the internal energy density $U$ of the form
\begin{align*}
S(e(u),z,\nabla z,\theta)&=-\partial_\theta F
=\BB:e(u)-\varphi'(\theta)\,,\\
U(e(u),z,\nabla z,\theta)&=F+\theta\, S=\tfrac{1}{2}\CC(z)e(u):e(u)+G(z,\nabla z)
+\varphi(\theta)-\theta\,\varphi'(\theta)\,,
\end{align*}
where $\varphi$ is a function such that $c_\mathrm{V}(\theta):=\partial_\theta U=-\theta\,\varphi''(\theta)$
is the specific heat capacity,
and $S$ and $U$ satisfy a Gibbs' relation: $\partial_\theta U=\theta\,\partial_\theta S$.
Starting from the entropy equation, which balances the changes of entropy with the heat flux
and the heat sources given by the dissipation rate and  the  external sources~$H$,
\begin{equation*}
\theta \, \partial_\theta S \,  \dot\theta  +\mathop{\mathrm{div}}j=
\mathrm{R}_1(\dot z)+ \left( \DD(z,\theta)e(\dot u)  -\theta\,\BB \right)  :e(\dot u)+H\,,
\end{equation*}
and then invoking Fourier's law $j=  -  \KK(z,\theta)\nabla\theta$ as well as the above Gibbs' relation,
the choice $\varphi(\theta)=\theta(1-\log\theta)$ indeed results
in the heat equation \eqref{eq:theta} with $c_\mathrm{V}(\theta)=\mathrm{const.}=1$.
\par
In fact, the temperature dependence of the heat capacity can be described
by the classical Debye model, see e.g.\
\cite[Sect.\ 4.2, p.\ 761]{Wed97LPC}.
In a first approximation it predicts a cubic growth of $c_\mathrm{V}$ with respect to temperature up to a
certain, material-specific temperature, the so-called Debye temperature $\theta_\mathrm{D}$, whereas
for $\theta\gg\theta_\mathrm{D}$ it can be approximated by $c_\mathrm{V}\equiv\mathrm{const}$.
Thus, the use of \eqref{eq:theta} with $c_\mathrm{V}(\theta)=\mathrm{const.}$
(normalized to $c_\mathrm{V}(\theta)=1$ for shorter presentation) is justified
if the temperature range of application is assumed to be above Debye temperature,
i.e., $\theta\gg\theta_\mathrm{D}$. Indeed, our main existence Theorem \ref{thm:main},
see also Proposition~\ref{prop:exist-discrete-problem}, contains
an enhanced positivity estimate, which ensures that the temperature $\theta$, as a
component of an energetic solution $(u,z,\theta)$,
always stays above a tunable threshold (to be tuned to~$\theta_\mathrm{D}$),
provided that the initial temperature and
the heat sources $H$ are suitably large, see \eqref{teta-pos+}.
\par
In this context, let us here also allude to our hypothesis on the heat
conductivity tensor $\KK(z,\theta)$,
which is assumed to have subquadratic growth in $\theta$, see \eqref{ass-K-b}.
According to experimental findings, cf.\ \cite{Eie64MDWH,Kle12LWP}, polymers
such as e.g.\  polymethylmethacrylate  (PMMA), exhibit
such a subquadratic growth of the heat conductivity.
In contrast, for metals the heat conductivity is ruled by the electron thermal conductivity.
For this, the Wiedemann-Franz law  states a linear dependence on the temperature,
cf.\ \cite[Chapter 17]{CalRet12FMSE}.
Moreover, let us mention that the analytical results in \cite{FePeRo09ESPT}
are obtained under the assumption of superquadratic growth,
which is justified by the examples  on nonlinear heat conduction given in \cite{ZelRaj02PSWH},
that are related to radiation heat conduction or electron/ion heat conduction in a plasma.
Thus, in conclusion, the thermal properties of our model rather comply with polymers than with metals.
\paragraph{Vanishing viscosity and inertia.}
 Finally, in Section  \ref{s:6}  ahead we will address the    analysis of system \eqref{ourPDE}
as the rates of the external load and of the heat sources become slower and slower. Therefore, we will rescale time by a factor
$\eps$ and perform the asymptotic analysis as $\eps \downarrow 0$ of the rescaled  system, i.e.\
with   vanishing viscosity and inertia in the momentum equation, and  vanishing viscosity in the heat equation. Before entering into the
details of our result, let us briefly overview some related literature.
  \par  
On the one hand, the asymptotic analysis for vanishing viscosity and inertia of the sole  momentum balance  has been the subject of earlier
work: we refer, e.g., to \cite{MaSiGaMM} for study of   the   purely elastic limit of dynamic viscoelastic solutions 
to a frictional contact problem, in terms of a graph solution notion. This problem was approached from a more abstract viewpoint in \cite{MaReSo},
with applications to finite-dimensional mechanical systems featuring elastic-plastic behavior with linear hardening in \cite{MaPeMM}. 
On the other hand, a well-established approach to  fully rate-independent systems consists in viscously regularizing the rate-independent flow rule 
for the internal variable (typically coupled with a purely elastic equilibrium equation for the displacements), and taking the vanishing-viscosity limit. 
This  leads to \emph{parameterized}/$\BV$ solutions, encoding
information on the energetic behavior of the system at jumps, see e.g.\ \cite{EM,MRS09,MRS10,DDS}, as well as e.g.\ 
\cite{KMZ,LT,KnRoZa11} for applications to fracture and damage.  We also mention \cite{MR2851894,MR3651608} for finite-dimensional singularly perturbed second order potential-type equations.  
The convergence of kinetic variational inequalities to rate-independent quasistatic variational inequalities was tackled in \cite{MiPeMa}. 
 \par
Let  us point out that our analysis is substantially
 different from the   ``standard'' vanishing-viscosity  approach to rate-independent systems, since in our context viscosity (and inertia for the momentum equation) vanish in the heat and momentum balances, only, while
 we keep the flow rule for the damage parameter rate-independent. 
  In fact, our study is akin to the vanishing-viscosity
and inertia  analysis
 that has been  addressed, in the momentum equation only, for isothermal, rate-independent processes with  dynamics
in 
\cite{Roub09RIPV,Roub13ACVB},
 leading to an energetic-type notion of solution.
 We also refer to \cite{DMScQEPP13,Scal14LVDP} for a combined vanishing-viscosity limit in the momentum equation and in the flow rule, in the cases
  of perfect plasticity and delamination, respectively
  \par
The coupling with the temperature equation attaches 
   an additional difficulty   to   our own  vanishing-viscosity analysis. 
   Because of this,  it will be essential to assume an appropriate scaling of the tensor
of heat conduction coefficients:
in fact, we shall require that the conductivity matrix ($\KK$ in \eqref{eq:theta}) diverges as inertia and viscosity vanish.  
This reflects the
fact that in the slow-loading regime  heat propagates at infinite speed. 
Thus, in the  slow-loading  limit  we will  obtain that the temperature is  spatially constant and its evolution is  fully decoupled
from the one of the mechanical variables.   Indeed, in  Theorem \ref{Thm6.1}.
 we will
prove convergence  as $\eps \downarrow 0$ of energetic solutions $ (u_\eps, z_\eps,\theta_\eps)$ of the rescaled system to a triple $(u,z,\Theta)$ such that  
\begin{compactitem}
\item[-]
$(u,z)$ 
is 
\emph{local solution}  (according to the notion introduced in 
\cite{Miel08?DEMF, Roub13ACVB}) to the (fully rate-independent)  system consisting of the
static momentum balance and of the rate-independent flow
rule for damage;
\item[-]  under a suitable scaling condition on the heat sources, the spatially constant function $\Theta$  satisfies an ODE that involves 
a nonnegative defect measure arising from 
 the limit of the viscoelastic dissipation term.
\end{compactitem} 
\par
\paragraph{Plan of the paper.}
The assumptions on the material quantities
and the statement of the existence results for energetic solutions are given in Section \ref{s:3}.
In Section \ref{s:4} we present the properties of time-discrete solutions,
hence in Section \ref{s:5} we prove the main theorem by passing
to the time-continuous limit by variational convergence techniques.
Finally,  Section \ref{s:6} is devoted to  the asymptotics
for vanishing viscosity and inertia.
\par
%
%
%
%
%
%
%
%
%
%
%
\section{Setup and  main result}
\label{s:3}
%
\noindent {\bf Notation: } Throughout this paper, for a given Banach
space $X$ we will denote by $\pairing{}{X}{\cdot}{\cdot}$ the
duality pairing between $X^*$ and $X$, and by $\BV([0,T];X), $
resp.\ $\rmC_\mathrm{weak}^0 ([0,T];X)$, the
space of the bounded variation, resp.\ weakly continuous, functions
 with values in $X$. Notice
that we shall consider any $v\in \BV([0,T];X) $ to be defined
\emph{at all} $t\in [0,T].$
We also mention that the symbols $c,\,C, \, C' \ldots $ will be used
to denote a positive constant depending on given data, and possibly
varying from line to line. Furthermore in proofs, the symbols $I_i$,
$i=1,\ldots$, will be place-holders for several integral terms
popping up in the various estimates. We warn the reader that we will
not be self-consistent with the numbering so that, for instance, the
symbol $I_1$ will occur in several proofs with different meanings.
%
\subsection{Assumptions}
\label{s:ass}
%
We now specify the assumptions on the domain $\Omega$,
on the nonlinear functions  featured
in  \eqref{ourPDE}, on the initial data, and on the loading and
source terms, under which our existence result, Theorem \ref{thm:main}, holds.
Let us mention in advance that, in order to
simplify the exposition in Sections \ref{s:3}--\ref{s:5}, and
in view of the analysis  for vanishing viscosity and inertia
in Section\ \ref{s:6}, cf.\ \eqref{pistar},
we will suppose that   the matrix of thermal expansion coefficients
is a given symmetric matrix $\BB \in \Rsym$.
 We  instead  allow the elasticity and viscosity
tensors to depend on the state variables $z$ and $(z,\theta)$,
respectively, thus  we need to impose suitable growth and coercivity
conditions.  We will also make growth assumptions for the matrix of
heat conduction coefficients, which are suited for our analysis and
which are in the line of \cite{FePeRo09ESPT, RoRoESTC14}.
These growth conditions  will play a key role in the derivation
of estimates for the temperature $\theta$, in that it will allow us
to cope with the quadratic right-hand side of \eqref{eq:theta}.
 Before detailing the standing assumptions of this paper, let us mention that, 
to  ease the presentation, we will assume the functions of the temperature featuring in the model to be defined also for nonpositive values of 
$\theta$. At any rate,
later on we will prove the existence of solutions such that the temperature is bounded from below by a positive constant, see \eqref{strict-pos}--\eqref{teta-pos+}.
\paragraph{\bf Assumptions on the domain.}
We assume that
\begin{equation}
\begin{split}
\label{ass-dom}
&\Omega\subset\R^d\,,\;d\in\{2,3\}\,,\text{ is a bounded domain with Lipschitz-boundary
$\partial\Omega$ such that\ }\\
&\partial_\mathrm{D}\Om\subset\partial\Om\text{ is nonempty and relatively open and }
\partial_{\mathrm{N}}\Om:=\partial\Om\backslash\partial_\mathrm{D}\Om\,.
\end{split}
\end{equation}
 Moreover, we will use the following notation for the state spaces for $u$ and $z$:
\begin{equation}
\begin{split}
\label{statesp}
H_{\mathrm{D}}^1(\Omega;\R^d)&:=
\{v\in H^1(\Omega;\R^d)\colon \ v=0 \text{ on } \partial_{\mathrm{D}}\Om \text{ in  the  trace sense} \}\,,\\
\calZ&:=\{ z \in W^{1,q}(\Omega)\colon z \in [0,1] \ \aein \Omega \}\,,
\end{split}
\end{equation}
with fixed $q>1$, cf.\ \eqref{G-growth}.
Analogous notation will be employed for the Sobolev spaces $W^{1,\gamma}_\mathrm{D}$, $\gamma\geq1$.
\paragraph{\bf Assumptions on the material tensors.}
We require that the tensors $\BB\in\R^{d \times d}$,  $\CC\colon\R\to \R^{d \times d \times d \times d}$,
and $\DD\colon\R\times\R\to \R^{d \times d \times d \times d}$ fulfill
\begin{subequations}
\label{ass-CD}
\begin{align}
&\label{ass-B} \BB \in \Rsym \text{ and set }
C_\BB:=\mod{\BB}\,,
\\
&
\label{conti}
 \CC\in \rmC^{0,1} (\R;  \R^{d \times d \times d \times d})
\text{ and } \DD \in \rmC^0 (\R\times\R;  \R^{d \times d \times d \times d}) \,,
\\
\label{assCD-1}
&
\CC(z), \, \DD(z,\theta) \in\Tsym \text{ and are positive definite for all }
z \in \R\,,\ \theta\in\R\,,
\\
& \label{conti-c} \exists\, C_\CC^1, \, C_\CC^2 >0  \ \ \forall\, z
\in \R \ \ \forall\, A\in \Rsym \colon \quad
C_\CC^1 \mod{A}^2 \le \CC(z)A: A\le C_\CC^2 \mod{A}^2\,,\\
&
\label{assCD-3}
\exists\, C_\DD^1, \, C_\DD^2 >0  \ \ \forall\, z \in \R \ \ \forall\, \theta \in \R \ \
\forall\, A\in \Rsym \colon \quad
C_\DD^1 \mod{A}^2 \le \DD(z\,,\theta)A: A\le C_\DD^2 \mod{A}^2\,.
\end{align}
\end{subequations}
In the expressions above, $\Rsym$ denotes the subset of
symmetric matrices in $\R^{d \times d}$ and $\Tsym$ is the subset of
symmetric tensors in $\R^{d \times d \times d \times d}$. In
particular,
$$
\CC(z)_{ijkl}{=}\CC(z)_{jikl}{=}\CC(z)_{ijlk}{=}\CC(z)_{klij}\;\text{
and }\;
\DD(z,\theta)_{ijkl}{=}\DD(z,\theta)_{jikl}{=}\DD(z,\theta)_{ijlk}{=}\DD(z,\theta)_{klij}\,.
$$
In addition to \eqref{ass-CD}, we impose that $\CC(\cdot)$ is
monotonically nondecreasing, i.e.,
\begin{equation}
\label{mono}
\forall\, A\in \Rsym \;\;\forall\,0\leq z_1\leq z_2\leq1 \colon \quad
\CC(z_1)A:A\leq \CC(z_2)A:A\,.
\end{equation}

\paragraph{\bf Assumptions on the damage regularization.}
We require that $G\colon\R\times\R^d\to  \R\cup\{\infty\}$  fulfills
\begin{subequations}
\label{assG}
\begin{eqnarray}
\label{Gind}
&&\text{Indicator: For every } (z,\xi)\in\R\times\R^d\colon\quad G(z,\xi)<\infty\;\Rightarrow\;z\in[0,1]\,;\\
\label{Gcont}
&&\text{Continuity: } 
G \text{ is continuous on its domain } \mathrm{dom}(G)\,, \quad  G\ge0\,,  \quad
\text{and} \quad  
G(0,0)=0\,;\\
\label{Gconv}
&&\text{Convexity:  For every }z\in\R,\; G(z,\cdot)\text{ is convex;}\\
\nonumber
&&\text{Growth: There exist constants }q>1 \text{ and }C_G^1,C_G^2>0
\text{ such that for every }(z,\xi)\in\mathrm{dom}(G)\qquad\qquad\\
\label{G-growth}
&&\hspace*{4cm}
C_G^1 (\mod{\xi}^q-1) \le G(z,\xi) \le C_G^2(\mod{\xi}^q+1)\,.
\end{eqnarray}
\end{subequations}
\begin{rmk}[Properties of the regularizing term]
\label{LSCG} 
Since we are encompassing the feature that
$z(\cdot,x)$ is decreasing for almost all $x \in \Omega$,
starting from an initial datum $z_0 \in [0,1]$ a.e.\ in $\Omega$,
the $z$-component of any energetic solution to \eqref{ourPDE} will fulfill
$z(t,x) \leq 1$ a.e.\ in $\Omega$. Therefore, we could
weaken \eqref{Gind} and just require that the domain of  $G$ is a subset of $[0,\infty)$.

Furthermore, we may require the  third  of  \eqref{Gcont} without loss of generality,
since adding a constant to $G$ shall not affect our analysis.

Further observe that the above assumptions \eqref{assG} ensure that the integral functional
\begin{equation*}
\calG\colon L^r(\Omega)\times L^q(\Omega;\R^d)\to\R\cup\{\infty\}\,,\quad
\calG(z,\xi):=\int_\Omega G(z,\xi)\,\mathrm{d}x
\end{equation*}
is lower semicontinuous with respect to strong convergence in $L^r(\Omega)$ for any $r\in[1,\infty)$
and weak convergence in $L^q(\Omega;\R^d)$, cf.\ e.g.\ \cite[Theorem 7.5, p.\ 492]{FoLeo07}.
In addition, $\calG$ is continuous with respect to strong convergence
in $\left( L^r(\Omega)\times L^q(\Omega;\R^d) \right) \cap\mathrm{dom}(G)$.
\end{rmk}
\begin{rmk}[Example: Phase-field approximation  of fracture]
\label{rmk:a-t}
Starting from the work of Ambrosio and Tortorelli \cite{AmbTor},
gradient damage models have been extensively used in recent years to predict crack propagation
in brittle or quasi-brittle materials, by means of phase-field approximation \cite{BoFrMa08VAF}. 
In this approach, a sharp crack is regularized by defining an internal variable
that interpolates continuously between sound and fractured material.
In the mathematical literature,
evolutionary problems for phase-field models were considered
for instance in the fully quasistatic case \cite{Giac05ATAQ},
in viscoelasticity as a gradient flow \cite{BabMil},
and in dynamics \cite{LarOrtSue}, always for isothermal systems.
A thermodynamical model for regularized fracture with inertia
was proposed and treated numerically e.g.\ in \cite{MieHofWel}.
The passage to the limit from phase-field to sharp crack,
though successfully treated in the quasistatic \cite{Giac05ATAQ} and in the viscous case \cite{BabMil},
is by now an open problem in dynamics and is outside the scope of this contribution.
\par
In this context, typical examples for the regularizing term are functionals of Modica-Mortola type,
$$
\mathcal{G}^{q}_{\rm MM}
(z,\nabla z) = \int_\Omega
G^{q}_{\rm MM}(z,\nabla z) \dd x \qquad \text{with }
  G^q_{\rm MM}(z,\nabla z):=  \mod{\nabla z}^q + W(z)  +  I_{[0,1]}(z)  \,,
$$
where $q>1$, $W$ is a suitable potential, and $I_{[0,1]}(z):=0$ 
if $z\in[0,1]$, $I_{[0,1]}(z):=+\infty$ otherwise.
Such regularization agrees with the above assumptions up to
an additive constant.
\par
Notice that in Section \ref{s:4}, to construct discrete solutions,
we will consider unilateral minimum problems of the type
$$
\min_{z\in\calZ}
\left\{ \integ{\Om}{\tfrac12 \CC(z)e(u) \psm  e(u)}{x} + \integ{\Om}{G(z,\nabla z)}{x}
+ \calR_1(z-\bar z) \right\}
$$
for given $u\in H^1_\mathrm{D}(\Om;\R^d)$ and a given  $\bar z\in\calZ$ defined in \eqref{statesp}.
Setting $\CC(z):=(z^2+\delta)\,I$ with $\delta>0$, and $G:=G^{2}_{\rm MM}$ with $W(z):=\frac12(1+z^2)$,
the minimum problem is equivalent to
$$
\min_{0\le z\le  \bar z  } \left\{ \integ{\Om}{(\tfrac12 (z^2+\delta)\mod{e(u)}^2}{x}
+  \integ{\Om}{ \tfrac1{2\delta}  (1-z)^2}{x} +  \integ{\Om}{\delta \mod{\nabla z}^2}{x}   \right\} \,,
$$
that is the classical minimization of the Ambrosio-Tortorelli functional, see \cite{AmbTor,Giac05ATAQ}.
The generalization to $G=G^{q}_{\rm MM}$ with $q>1$ was considered in \cite{Iur13-ACV}.
In this case one may want an effective dependence of the viscous tensor on $z$,
choosing $\DD(z,\theta)=\CC(z)$ as in \cite{LarOrtSue}.
\end{rmk}
\paragraph{Assumptions on the heat conductivity.}
On   $\KK \colon \R \times \R \to \R^{d \times d}$ we assume that
\begin{subequations}
\label{ass-K}
\begin{align}
& \label{ass-K-a}
\mathbb{K}  \in \rmC^0 (\R \times \R;\R^{d \times d}) \,,
\quad \KK(z,\theta) \in \Rsym \text{ for all } z \in \R\,,\ \theta\in\R\,,
\\
& \label{ass-K-b} \exists\,  \kappa\in(1,\kappa_d) \ \ \exists\,
c_1, \, c_2 >0 \ \ \forall\, (z,\theta) \in \R \times \R \ \
\forall \, \xi \in \R^d  \colon
\quad
\begin{cases}
 c_1 (\mod{\theta}^{\kappa} +1) |\xi|^2 \leq   \mathbb{K}(z,\theta)\xi \cdot \xi\,,
 \\
 | \mathbb{K}(z,\theta) | \leq c_2 (\mod{\theta}^{\kappa} +1)\,,
   \end{cases}
\end{align}
\end{subequations}
where $\kappa_d=5/3$ for $d{=}3$ and $\kappa_d=2$ for $d{=}2$.
\par
The bound $\kappa_d$ essentially comes into play 
in the derivation of the \emph{Fifth a priori estimate} (cf.\ the proof of Proposition \ref{Apriori}), 
and when passing from
time-discrete to continuous in the heat equation, cf.\ Proposition \ref{Heat}.
Essentially,  it arises as a consequence of the enhanced integrability
of the approximating temperature variables obtained by interpolation in \eqref{apthetaLp}.
%
\paragraph{Assumptions on the initial data.} 
We impose that
\begin{subequations}\label{assu-init}
\begin{equation}\label{hyp-init}
u_0 \in\, H_{\mathrm{D}}^1(\Omega;\R^d)\,,\quad\dot u_0 \in\, L^2(\Omega;\R^d)\,,\quad z_0\in \calZ\,,
\end{equation}
\be
\label{wzero}
\quad \theta_0 \in\, L^{1} (\Omega)\,,
\quad\text{and }\theta_0
\geq \theta_*>0 \ \ \aein \Omega\,,
\end{equation}
\end{subequations}
where the state spaces $H^1_\mathrm{D}(\Omega;\R^d)$ and $\calZ$ are defined in \eqref{statesp}.
\paragraph{Assumptions on the loading and source  terms.}
On the data $\fv,\, \fs,\, \hv$, and $\hs$ we require that
\begin{subequations}
\label{ass-data}
\begin{align}
&
\label{ass-forces}
\fv\in H^1(0,T; H^1_\mathrm{D}(\Om;\R^d)^*)\,, \qquad
\fs\in H^1(0,T; L^2(\partial_{\mathrm{N}}\Om;\R^d))\,,
\\
\label{ass-dataHeat}
&
\begin{aligned}
&
\hv\in L^1(0,T; L^1(\Om)) \cap L^2(0,T; H^1(\Omega)^*)\,, \quad \hv \geq 0   \text{ a.e.\ in }
(0,T) \times \Omega \,, \\
& \hs\in L^1(0,T; L^2(\partial\Om))\,, \quad  \hs \geq 0 \text{
a.e.\ in } (0,T) \times \partial \Omega \,,
\end{aligned}
\end{align}
\end{subequations}
For later convenience, we also introduce $\bigF\colon [0,T] \to  H^1_\mathrm{D}(\Om;\R^d)^*$ defined by
\begin{equation}
\label{bigF}
\pairing{}{H^1_\mathrm{D}(\Om;\R^d)}{ \bigF(t)}{v}:=
 \pairing{}{H^1_\mathrm{D}(\Om;\R^d)}{ \fv(t)}{v} + \integ{\partial_{\mathrm{N}}\Om}{\fs\ps v}{\acca^{d-1}(x)}
\qquad \text{for all } v \in  H^1_\mathrm{D}(\Om;\R^d)\,.
\end{equation}
It follows from \eqref{ass-forces} that $\bigF \in  H^1(0,T;
H^1_\mathrm{D}(\Om;\R^d)^*)$.
%
%
%
\subsection{Weak formulation and main existence result}
%
As already mentioned, following \cite{Roub10TRIP},
the \emph{energetic} formulation
of (the initial-boundary value problem associated with) system \eqref{ourPDE} consists of the
 variational formulation of the momentum  and of the heat equations \eqref{eq:u} and \eqref{eq:theta},
with suitable test functions,
 and of a semistability condition joint with a \emph{mechanical energy} balance,
providing the weak formulation of the damage equation \eqref{eq:z}. The latter relations  feature
the mechanical  (quasistatic) energy associated with \eqref{ourPDE}, i.e.,
\begin{equation*}
\E(t,u,z):=\integ{\Om}{(\tfrac12 \CC(z)e(u) \psm  e(u)
+G(z,\nabla z))}{x} - \pairing{}{H^1_\mathrm{D}(\Om;\R^d)}{\bigF(t)}{u}\,,
\end{equation*}
as well as the rate-independent dissipation potential,
given as the integrated version of \eqref{dissip-potential}
\begin{equation}
\label{integrated-1-dissip}
\calR_1(\dot z):=\int_\Omega \mathrm{R}_1(\dot z)\,\mathrm{d}x\,.
\end{equation}
\par
In Definition \ref{def4} below, the choice of the test functions for the weak momentum equation
reflects  the regularity \eqref{reguu1} required for $u$, which in
turn will derive from the standard energy estimates that can be
performed on system~\eqref{ourPDE}. As we will see, such estimates
only yield $\theta \in L^\infty (0,T;L^1(\Omega))$. In fact, the
further regularity \eqref{reguw} for $\theta$ shall result from a
careful choice of test functions for the time-discrete version of
\eqref{eq:theta}, and from refined interpolation arguments, drawn
from \cite{FePeRo09ESPT}. Finally, the $\BV ([0,T];
W^{2,d+\delta}(\Omega)^*)$-regularity for $\theta$ follows from a
comparison argument. The choice of the test functions in
\eqref{weak-heat} is the natural one in view of \eqref{reguu}.
\begin{defin}[Energetic solution \eqref{reguu}--\eqref{energetic-formulation}]
\label{def4}
\upshape
 Given a quadruple of initial data
$(u_0,\dot u_0,z_0,\theta_0)$  satisfying \eqref{assu-init},
we call a triple $(u,z,\w)$ an \emph{energetic solution} of
the Cauchy problem for the  PDE system
\eqref{ourPDE} complemented with the boundary conditions
\eqref{bc} if
\begin{subequations}
\label{reguu}
\begin{align}
\label{reguu1}
& u\in
H^1(0,T;H_{\mathrm{D}}^{1}(\Omega;\R^d)) \cap 
W^{1,\infty}(0,T;L^2(\Omega;\R^d)) \,,
\\
\label{reguz}
&
\begin{aligned}
&\!
z \in  L^\infty (0,T;W^{1,q}(\Omega)) \cap L^\infty ((0,T)\times \Omega) \cap
\BV([0,T];L^1(\Omega))\,, \\
&\!
z(t,x) \in [0,1] \ \foraa (t,x) \in
(0,T)\times\Omega\,,
\end{aligned}
\\
\label{reguw}  &
\theta \in L^2(0,T; H^1(\Omega)) \cap L^\infty (0,T; L^1(\Omega)) \cap
\BV ([0,T]; W^{2,d+\delta}(\Omega)^*) \,,
\end{align}
\end{subequations}
such that the triple $(u,z,\w)$ complies
with the initial conditions
\begin{equation*}
u(0)=u_0\,, \quad \dot u(0)=\dot u_0\,, \quad  z(0)=z_0\,, 
\quad \theta(0)=\theta_0 \quad \aein  \Omega\,,
\end{equation*}
and with the following properties:
\begin{compactitem}
\item
\emph{unidirectionality}: for a.a.\ $x\in\Om$, the function
$z(\cdot,x)\colon[0,T]\to[0,1]$ is nonincreasing;
\item
\emph{semistability}:
for every $t\in [0,T]$
\begin{subequations}\label{energetic-formulation}
\begin{equation}
 \label{semistab-general}
\forall\, \tilde{z}\in \calZ
\colon\quad
\E(t,u(t),z(t))\le\E(t,u(t),\tilde z)+{\mathcal R}_1(\tilde z-z(t))\,,
\end{equation}
where $\calZ$ is defined in \eqref{statesp};
 \item
\emph{weak formulation of the momentum equation}:  for all $t\in[0,T]$
\begin{equation}
\label{e:weak-momentum-variational}
\begin{aligned}
& \rho \integ{\Omega}{\dot u(t) \ps v(t)}{x}
- \rho\integlin{0}{t}{\integ{\Omega}{\dot u \ps \dot v}{x}}{s}
+\integlin{0}{t}{\!\integ{\Omega}{\left(\DD(z,\theta)e(\dot u){+}\CC(z) e(u)  
{-}\theta\,\BB \right) \psm e(v)}{x}}{s}
\\
&= \rho\integ{\Omega}{ \dot u_0 \ps v(0)}{x}+
\integlin{0}{t}
{\pairing{}{H^1_\mathrm{D}(\Omega;\R^d)}{\bigF}{v}}{s}
\end{aligned}
\end{equation}
for all test functions $v \in L^2(0,T;H^1_{\mathrm{D}}(\Omega;\R^d))
\cap W^{1,1}(0,T;L^2(\Omega;\R^d))$;
\item
\emph{mechanical energy equality}: for all  $t \in[0,T]$
\begin{equation}
\label{mech-energy-ineq}
\begin{aligned}
 &  \tfrac\rho2{\integ{\Om}{\mod{\dot u(t)}^2}{x}}+
\E(t,u(t),z(t))  + \integ{\Om}{(z_0{-}z(t))}{x}
+ \integlin{0}{t}{\!\integ{\Omega}{ \left(\DD(z,\theta)e(\dot u){-}\theta\,\BB \right)\psm e(\dot u)}{x}}{s}
\\
&= \tfrac\rho2{\integ{\Om}{\mod{\dot u_0}^2}{x}}+
\E(0,u_0,z_0)
+\int_0^t\partial_t\E(s,u(s),z(s))\,\mathrm{d}s
\,,
\end{aligned}
\end{equation}
where $\partial_t \E(t,u,z) = -\pairing{}{H^1_\mathrm{D}(\Om;\R^d)}{\dot{\bigF}(t)}{u}$;
\item
\emph{weak  formulation of the heat equation}: for all $t\in[0,T]$
\begin{equation}
\label{weak-heat}
\begin{aligned}
&
\pairing{}{W^{2,d+\delta}(\Omega)}{\theta(t)}{\testw(t)}
-\integlin{0}{t}{\!\integ{\Om}{\theta\,\dot\testw}{x}}{s}
+\integlin{0}{t}{\!\integ{\Omega}{\KK(\theta,z)\nabla\theta\cdot\nabla \testw}{x}}{s}
\\
&= \integ{\Omega}{\w_0\,\testw(0)}{x}+\integlin{0}{t}{\!\integ{\Om}{\testw\mod{\dot z}}{x}}{s}
+\integlin{0}{t}{\!\integ{\Omega}{\left(\DD(z,\theta)e(\dot u)\psm e(\dot u)
{-} \theta\,\BB \right) \psm e(\dot u)\testw}{x}}{s}\\
&\phantom{=}
+\integlin{0}{t}{\!\integ{\partial\Om}{\hs\testw}{\acca^{d-1}(x)}}{s}
+\integlin{0}{t}{\!\integ{\Om}{\hv\testw}{x}}{s}
\end{aligned}
\end{equation}
for all test functions $\eta\in H^1(0,T;L^2(\Omega))\cap
\mathrm{C}^0([0,T];W^{2,d+\delta}(\Omega))$, for some fixed $\delta>0$.
Here and in what follows, $\mod{\dot z}$ denotes the total variation measure of $z$
(i.e., the heat produced by the rate-independent dissipation), which is defined
on every closed set of the form   $[t_1,t_2]\times C\subset[0,T]\times\overline{\Omega}$  by
\begin{equation*}
 \mod{\dot z}([t_1,t_2]{\times} C):=\int_C \mathrm{R}_1 (z(t_2)-z(t_1))\,\mathrm{d}x\,, 
\end{equation*}
and, for simplicity, we shall write
$\integlin{0}{t}{\integ{\Om}{\testw\mod{\dot z}}{x}}{s}$ instead of
$\iint_{[0,t] \times \Omega} \testw \mod{\dot z}(\dd s \dd x ) $.
\end{subequations}
\end{compactitem}
\end{defin}
 Since $z$ has at most $\BV$-regularity as a function of time, 
it may have (at most countably many) jump points, where the  left and right limits $z(t_-),\, z(t_+) \in L^1(\Omega)$  differ. Indeed, from $z\in L^\infty (0,T;W^{1,q}(\Omega)) \cap \BV ([0,T];L^1(\Omega))$ it is  immediate to deduce that, at every $t\in [0,T]$ (with the standard conventions
$z(0_-): = z(0)$ and $z(T_+): = z(T)$),
 both  $z(t_-)$ and $z(t_+)$ 
are elements in    $W^{1,q}(\Omega)$, with $z(t_-) =\lim_{s\uparrow t} z(s)$
 and  $z(t_+) =\lim_{s\downarrow t} z(s)$ w.r.t.\ the weak topology of $W^{1,q}(\Omega)$. In particular, the right limit $z(0_+)$ exists, and it may be $z(0_+) \neq z(0) =z_0$ (observe that, by \eqref{assu-init} the initial condition is fulfilled as an equality in $W^{1,q}(\Omega)$).
  In that case, the mechanical energy balance \eqref{mech-energy-ineq} records the jump of the stored/dissipated energies at the initial time. 
 \begin{rmk}[Total energy balance]
Summing up the mechanical energy inequality \eqref{mech-energy-ineq}
and the weak heat equation \eqref{weak-heat} tested by $\eta\equiv 1$, 
yields the total energy balance
\begin{equation*}
\begin{split}
\label{totenbal}
&\int_\Omega\tfrac{\rho}{2}|\dot u(t)|^2\,\mathrm{d}x
+\calE(t,u(t),z(t))+ \int_\Omega \theta(t)  \,\mathrm{d}x 
=\int_\Omega\tfrac{\rho}{2}|\dot u_0|^2\,\mathrm{d}x
+\calE(0,u_0,z_0)+\int_\Omega \theta_0  \,\mathrm{d}x 
\\
&\phantom{=}+\int_0^t \partial_t\calE (s,u(s),z(s))\,\mathrm{d}s
+\int_0^t\!\int_\Omega \hv\,\mathrm{d}x\,\mathrm{d}s
+\int_0^t\!\int_{\partial\Omega} \hs\,\mathrm{d}\calH^{d-1}(x)\,\mathrm{d}s\,.
\end{split}
\end{equation*}
\end{rmk}
\begin{rmk}[Improved regularity on $\ddot u$]
\label{rmk:more-regularity}
From the definition of energetic solution we can gain improved regularity
for the time derivatives of the displacement.
Indeed, let $(u,z,\w)$ be as in \eqref{reguu} and such that
the weak momentum equation \eqref{e:weak-momentum-variational} holds.
Then \eqref{eq:u} holds in the sense of distributions and
$$
\rho \norm{\ddot{u}}{L^2(0,T; H^1_{\mathrm{D}}(\Om;\R^d)^*)}
= \sup_{\norm{v}{}\le1}
\integlin{0}{T}{\!\integ{\Omega}{\left(\DD(z,\theta) e(\dot u)
+  \CC(z) e(u)   -\theta\,\BB \right) \psm e(v)}{x}}{t}
-\integlin{0}{T}{\pairing{}{H^1_\mathrm{D}(\Om;\R^d)}{\bigF}{v}}{t} \,,
$$
where the supremum is taken over all functions such that
$\norm{v}{L^2(0,T; H^1_{\mathrm{D}}(\Om;\R^d))}\le1$. The left-hand
side of the previous equality is uniformly bounded thanks to
\eqref{ass-CD}, \eqref{bigF}, and \eqref{reguu}, thus we deduce that
$\ddot u\in L^2(0,T;H^1_\mathrm{D}(\Om;\R^d)^*)$. Since the spaces
$H^1_\mathrm{D}(\Om;\R^d) \subset L^2(\Om;\R^d) \subset
H^1_\mathrm{D}(\Om;\R^d)^*$ form a Gelfand triple, in view of e.g.\ 
\cite[Chap.\ 1, Sec.\ 2.4, Prop.\ 2.2]{LioMag72NHBV}, we
conclude that
\begin{equation}
\label{gelfand} \begin{aligned} &
\integlin{t_1}{t_2}{\pairing{}{H^1_\mathrm{D}(\Om;\R^d)}{\ddot u}{\dot
u}}{t}
\\ &
=\tfrac 1 2 \pairing{}{H^1_\mathrm{D}(\Om;\R^d)}{\dot u(t_2)}{\dot
u(t_2)}-\tfrac 1 2 \pairing{}{H^1_\mathrm{D}(\Om;\R^d)}{\dot u(t_1)}{\dot
u(t_1)}=\tfrac 1 2\|\dot u(t_2)\|^2_{L^2(\Om;\R^d)}-\tfrac 1 2\|\dot
u(t_1)\|^2_{L^2(\Om;\R^d)}
\end{aligned}
\end{equation}
for every $t_1,t_2\in[0,T]$. Hence, $\dot u$  can be used as a test
function in \eqref{e:weak-momentum-variational}.
\end{rmk}
We are now in a position to state the main result of this paper.
The last part of the assertion concerns  the strict positivity of the
absolute temperature $\theta$. In particular, under \eqref{h-pos+} below we are able to specify,
in terms of the given data, the constant
which bounds $\theta$ from below.
\begin{thm}[Existence of energetic solutions \eqref{reguu}--\eqref{energetic-formulation}]
\label{thm:main}
Under assumptions \eqref{ass-dom}--\eqref{mono}, \eqref{assG}, and \eqref{ass-K}, and
\eqref{ass-data} on the data $\fv,$ $\fs,$ $\hv,$ and $\hs$, for every
qua\-druple $(u_0,\dot{u}_0,z_0,\theta_0)$ fulfilling
\eqref{assu-init}  with $z_0$ satisfying \eqref{semistab-general},
there exists an energetic solution $(u,z,\theta)$ to the Cauchy
problem  for system~\eqref{ourPDE}.
\par
Moreover, there exists $\widetilde{\theta} >0$ such that
\begin{equation}
\label{strict-pos}
\theta (t,x)\geq \widetilde{\theta} >0 \qquad \foraa
(t,x)\in (0,T) \times \Omega\,.
\end{equation}
Furthermore, if in addition
\begin{equation}
\label{h-pos+}
\exists\, H_*>0 \colon\ 
\hv(t,x) \geq H_*  \ \foraa (t,x) \in (0,T) \times \Omega\;\text{ and }\;\,  
\theta_0(x) \geq \sqrt{H_*/\bar c} \ \ \foraa x \in \Omega\,,
\end{equation}
where $\bar{c} := \tfrac{ {(C_\BB)^2}}{2C_\mathbb{D}^1}$, then
\begin{equation}
\label{teta-pos+}
\theta (t,x)\geq \max\left\{ \widetilde{\theta} , \sqrt{H_*/\bar{c}} \right\} 
\quad \text{for a.a.}\ (t,x)\in (0,T) \times \Omega \,. 
\end{equation}
\end{thm}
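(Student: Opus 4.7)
The plan is to implement the time-discretization programme announced in the introduction: fix a partition of $[0,T]$ with step $\tau_n=T/n$ and construct discrete values $(u_n^k, z_n^k, \theta_n^k)$ iteratively. In line with the ``partially decoupled'' scheme described in the paper, at the $k$-th step I would \emph{first} update the damage variable by the unilateral incremental minimum problem
\begin{equation*}
z_n^k \in \mathop{\mathrm{argmin}}_{z \in \calZ}\Big\{ \calE(t_n^k, u_n^{k-1}, z) + \calR_1(z - z_n^{k-1}) \Big\},
\end{equation*}
which exists and is $\leq z_n^{k-1}$ by the direct method (using \eqref{assG}, \eqref{conti-c}, \eqref{mono} and the unidirectional structure of $\calR_1$), and which automatically yields a discrete semistability inequality at each node. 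Then I would solve the \emph{coupled} discrete momentum/heat system for $(u_n^k,\theta_n^k)$ on $H^1_\mathrm{D}(\Omega;\R^d)\times H^1(\Omega)$ by a fixed-point/pseudo-monotone argument, exploiting the positivity and boundedness of $\DD(z_n^k,\cdot)$, $\CC(z_n^k)$, $\KK(z_n^k,\cdot)$ from \eqref{ass-CD}--\eqref{ass-K}. This is the content of the auxiliary Proposition~\ref{prop:exist-discrete-problem} alluded to in the excerpt.

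\textbf{A priori estimates.} Testing the discrete momentum balance by $(u_n^k-u_n^{k-1})/\tau_n$, combining with the minimality of $z_n^k$ (which gives a discrete chain rule for $\calE$ in the $z$-variable) and using the elementary inequality $a(a-b)\geq \tfrac12(a^2-b^2)$ for the inertial term, I obtain a discrete mechanical energy inequality. Together with the coercivity in \eqref{conti-c}, \eqref{assCD-3}, \eqref{G-growth} and Gronwall's lemma, this yields uniform bounds for the piecewise-affine/constant interpolants corresponding to \eqref{reguu1}--\eqref{reguz}. The temperature estimates are more delicate: testing the discrete heat equation by $1$ gives $L^\infty(0,T;L^1(\Omega))$, and testing by $(\theta+1)^{\alpha-1}$ with $\alpha\in(0,1)$, together with the growth \eqref{ass-K-b} and the interpolation argument in the spirit of \cite{FePeRo09ESPT,RoRoESTC14}, provides the $L^2(0,T;H^1(\Omega))$-bound and the additional integrability needed. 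A comparison argument in the discrete heat equation finally yields the $\BV([0,T];W^{2,d+\delta}(\Omega)^*)$-bound.

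\textbf{Passage to the limit.} With these estimates I extract (diagonal) subsequences converging in the natural topologies: Helly's selection theorem delivers pointwise weak limits $z(t)$ in $W^{1,q}(\Omega)$, the Aubin--Lions--Simon lemma furnishes strong convergence of $u_n$ in $L^2(0,T;L^2)$ and of $\theta_n$ in, say, $L^p(0,T;L^q)$ for suitable exponents, and weak$^*$ convergence handles $\dot u_n$. I then pass to the limit in the weak momentum equation (linear in $\dot u$ and $u$, with a strongly converging $\CC(z_n)$-term) and in the semistability inequality via a \emph{mutual recovery sequence} based on the ansatz $\tilde z_n := \tilde z \wedge z_n(t)$ (or a slight modification thereof), whose admissibility is guaranteed by the continuity of $\calG$ on its domain (Remark~\ref{LSCG}). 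Lower semicontinuity of all energetic and dissipative terms yields the mechanical energy \emph{inequality} at the limit. To promote it to an \emph{equality}, I use $\dot u$ as a test function in the limit momentum balance --- legitimate thanks to the Gelfand-triple computation \eqref{gelfand} in Remark~\ref{rmk:more-regularity} --- combined with the chain rule for $\calE$ that the limit semistability provides. This equality forces $\int_0^t \int_\Omega \DD(z_n,\theta_n)e(\dot u_n)\psm e(\dot u_n)\,\dd x\,\dd s$ to converge to the corresponding limit integral, from which a standard Minty-type/Egorov argument gives the strong convergence of $\sqrt{\DD(z_n,\theta_n)}e(\dot u_n)$ in $L^2$; this is the crucial ingredient --- and the main analytical obstacle --- for passing to the limit in the quadratic right-hand side of the heat equation \eqref{weak-heat}.

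\textbf{Strict positivity of $\theta$.} Completing the square in the combined term
\begin{equation*}
\DD(z,\theta)e(\dot u)\psm e(\dot u) - \theta\,\BB\psm e(\dot u) \geq C_\DD^1|e(\dot u)|^2 - C_\BB|\theta||e(\dot u)| \geq -\bar c\,\theta^2,
\end{equation*}
with $\bar c = C_\BB^2/(2C_\DD^1)$, the heat equation is (in the distributional sense) of the form $\dot\theta - \div(\KK\nabla\theta) \geq -\bar c\,\theta^2 + \hv$. For the basic bound \eqref{strict-pos} I would carry a positivity estimate already through the discrete level by testing the discrete heat inclusion by $(\theta_n^k - \vartheta)^-$ for a sufficiently small constant $\vartheta>0$ and using $\theta_0\geq\theta_*$; the uniform lower bound survives the limit passage. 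Under the extra hypothesis \eqref{h-pos+} I compare with the spatially constant subsolution $\bar\theta:=\sqrt{H_*/\bar c}$, which satisfies $-\bar c\bar\theta^2 + H_* = 0$ and the homogeneous Neumann flux condition; the minimum principle (again best performed at the discrete level and preserved in the limit) then gives \eqref{teta-pos+}.
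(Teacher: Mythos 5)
Your overall architecture coincides with the paper's (decoupled incremental $z$-minimization, then a coupled discrete momentum/heat system, FPR-type temperature estimates, Helly and Aubin--Lions compactness, mutual recovery sequences, upgrading the energy inequality to an equality so as to get strong convergence of the viscous dissipation, and finally the limit in the heat equation). However, two of your discrete-level steps, as stated, would fail. First, solving the coupled system for $(u_n^k,\theta_n^k)$ on $H^1_\mathrm{D}(\Omega;\R^d)\times H^1(\Omega)$ is not a routine pseudo-monotone/fixed-point matter: the right-hand side of the discrete heat equation contains $\DD(z_n^{k-1},\theta_n^{k-1})e\big(\tfrac{u_n^k-u_n^{k-1}}{\tau_n}\big)\psm e\big(\tfrac{u_n^k-u_n^{k-1}}{\tau_n}\big)$, which for $u_n^k\in H^1$ is only in $L^1$ (and no elliptic regularity is available, since the coefficients $\CC(z_n^k)$, $\DD(z_n^{k-1},\theta_n^{k-1})$ need not be continuous in $x$), while $\KK$ grows like $|\theta|^{\kappa}$, so the associated operator is neither a bounded-coefficient one nor coercive in the $\theta$-component. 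The paper copes with this by adding the regularizing term $-\tau_n\div{(|e(u_n^k)|^{\gamma-2}e(u_n^k))}$ with $\gamma>4$ to the discrete momentum balance (so that the quadratic term lies in $L^{\gamma/2}$ and can be absorbed), truncating $\KK$ and the occurrences of $\theta$, and then letting the truncation parameter $M\to\infty$; the extra term must afterwards be carried through the a priori estimates and killed in the limit via the $\tau_n^{1/\gamma}$-weighted bound and a $W^{1,\gamma}$-approximation of $u_0$. Your scheme omits this device, so the existence of your discrete solutions is unsubstantiated.

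Second, the basic positivity \eqref{strict-pos} cannot be obtained by testing with $(\theta_n^k-\vartheta)^-$ for a fixed small constant $\vartheta>0$: a constant is not a subsolution of $\dot\theta-\div{(\KK\nabla\theta)}\geq -\bar c\,\theta^2$, because the residual $\bar c\,\vartheta^2>0$ survives the test and produces a per-step drop of order $\tau_n\bar c\,\vartheta^2$ that accumulates over the $n$ steps; no choice of $\vartheta$ closes the induction. One must instead compare with the decreasing solution of $v_k=v_{k-1}-\tau_n\bar c\,v_k^2$, $v_0=\theta_*$, which on the finite horizon stays above $\widetilde\theta=(\bar c\,T+1/\theta_*)^{-1}$ -- this is exactly how the paper proves \eqref{strict-pos}, and the threshold necessarily depends on $T$ and $\theta_*$. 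Your comparison with the constant $\sqrt{H_*/\bar c}$ under \eqref{h-pos+} is, by contrast, correct, precisely because that constant is an exact equilibrium of the reduced inequality. A smaller caveat concerns the recovery sequence: the naive choice $\tilde z\wedge z_n$ does not close the $\limsup$ estimate for the gradient term when only weak $W^{1,q}$-convergence of $z_n$ is available (general $q>1$); one needs the shifted construction $\min\{z_n,\max\{\tilde z-\delta_n,0\}\}$ with $\delta_n=\|z_n-z\|_{L^q(\Omega)}^{1/q}$, so that by Markov's inequality the set where no cancellation of the $G$-terms occurs has vanishing measure -- your parenthetical ``slight modification'' has to be exactly this, and it is the key point of the construction the paper borrows from the damage literature.
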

The proof of Theorem \ref{thm:main} will be developed in
Sections \ref{s:4} and \ref{s:5} by time-discretization (see
Propositions \ref{Conv}--\ref{prop:existence-limit}).
\begin{rmk}[Time-dependent Dirichlet loadings]
\label{rmk:time-dependent-loading} The existence of energetic
solutions can be proven also when
\emph{time-dependent Dirichlet loadings} are considered
for  the displacement $u$ instead of the homogeneous Dirichlet
condition \eqref{bc}, in the case the viscous tensor $\DD$ is
\emph{independent} of $z$ and $\theta$. This restriction is due to technical
reasons, related to the derivation of suitable estimates for the
approximate solutions to \eqref{ourPDE}.
\par
An alternative damage
model, that still features a $(z,\theta)$-dependence of $\DD$, is discussed in \cite{MURPHYS-LRTT},
where a
time-dependent loading for $u$ can be encompassed in the analysis,
albeit under  suitable stronger conditions.
\end{rmk}
%
\begin{rmk}[Failure of ``entropic'' solutions]
\label{rmk:comp-betty} \upshape As already mentioned, the regularity
for the temperature $\theta \in L^2(0,T; H^1(\Omega))\cap\BV ([0,T];
W^{2,d+\delta}(\Omega)^*) $ results from careful estimates on the
heat equation \eqref{eq:theta}, tailored on the quadratic character
of its right-hand side and drawn from \cite{FePeRo09ESPT}. There,
the analysis of the \emph{full system} for phase transitions
proposed by Fr\'emond \cite{Fre02NST}, featuring a heat equation
with an $L^1$ right-hand side, was carried out.

The techniques from \cite{FePeRo09ESPT} have been recently extended
in \cite{RoRoESTC14} to analyze a model for \emph{rate-dependent} damage in thermo-viscoelasticity.
Namely, in place of the $1$-homogeneous dissipation potential $\mathrm{R}_1$
from \eqref{dissip-potential}, the flow rule for the damage parameter in
\cite{RoRoESTC14} features the quadratic dissipation
$\mathrm{R}_2 (\dot z) = \frac12 |\dot z|^2$ if $\dot z \leq 0$, and $\mathrm{R}_2 (\dot z) = \infty$ else.
Consequently, the heat equation  in  \cite{RoRoESTC14} is of the type
\begin{equation}
\label{heat-rr14}
\dot\theta-\div(\KK(z,\theta)\nabla\theta)=|\dot z|^2+ \DD(z) e(\dot u)\psm e(\dot u)
- \theta\,\BB\psm e(\dot u) + H \qquad \text{in } (0,T) \times \Omega\,.
\end{equation}
In \cite{RoRoESTC14}, under a weaker growth condition on $\KK$ than
the present \eqref{ass-K}, it was possible to prove an existence
result for a weaker formulation of \eqref{heat-rr14}, consisting of
an entropy inequality and of a total energy inequality. The
resulting notion of ``entropic'' solution, originally proposed in
\cite{FePeRo09ESPT}, indeed  reflects the strict positivity of
the temperature, and the fact that the entropy increases along
solutions. Without going into details, let us mention that this
entropy inequality is (formally) obtained by testing \eqref{heat-rr14} by
$\varphi\,\theta^{-1}$, with $\varphi$ a smooth test function, and
integrating in time. This procedure is fully justified because
$\theta$ can be shown to be bounded away from zero by a positive
constant, hence $\varphi(t)\,\theta^{-1}(t) \in L^\infty(\Omega)$ for
almost all $t\in (0,T)$, and the integrals $\int_0^T \int_\Omega
|\dot z|^2 \varphi\,\theta^{-1}  \dd x  \dd t $ and $\int_0^T \int_\Omega
\DD(z) e(\dot u)\psm e(\dot u) \varphi\,\theta^{-1}  \dd x \dd t  $
resulting from the first  and second terms on the right-hand side of
\eqref{heat-rr14}  are well-defined.

In the present \emph{rate-independent} context,
proving an existence  result for the entropic formulation  of \eqref{eq:theta}
seems to be out of reach. Indeed, in such formulation the term
$\int_0^T \int_\Omega |\dot z|^2 \varphi\,\theta^{-1}  \dd x  \dd t $
would have to be replaced by  $\int_{[0,T] \times  \Omega} \varphi\,\theta^{-1}  |\dot z| (\dd x  \dd t) $,
with $|\dot z|$ the total variation measure of $z$, cf.\ \eqref{weak-heat},
but the above integral is not well defined since  $\varphi\,\theta^{-1}$ is not a continuous function.
\end{rmk}
%
\section{Time-discretization}
\label{s:4}
\subsection{The time-discrete scheme}
\label{ss:4.1}
Given a partition
\[
0=t_n^0<\dots<t_n^n=T \qquad  \text{with}  \qquad t\kk-t\km=\tfrac{T}{n}=:\taun\,,
\]
we construct a family of  discrete solutions ${(u\kk,z\kk,\theta\kk)}_{k=1,\ldots,n}$ 
by solving recursively the time-discretization scheme
\eqref{time-discrete-scheme} below,
where the data $\bigF$, $\hv$, and $\hs$ are approximated by \emph{local means} as follows
\begin{equation}
\label{local-means} \bigF\kk:=
\tfrac{1}{\taun}\int_{t\km}^{t\kk} \bigF(s)\dd s\,,
\qquad   \hv\kk:= \tfrac{1}{\taun}\int_{t\km}^{t\kk} \hv(s)
\dd s\,, \qquad  \hs\kk:= \tfrac{1}{\taun}\int_{t\km}^{t\kk} \hs(s)
\dd s\,,
\end{equation}
and the above  integrals need to be understood in the
Bochner sense.
\par
Let us mention in advance that we have to add the  regularizing term
$-\taun\div(|e(u\kk)|^{\gamma-2} e(u\kk))$ in the discrete momentum equation, with $\gamma >4$.
 Basically, the reason  for this is that we need to compensate
the quadratic term in $e(u\kk)$  on the right-hand
side of the discrete heat equation \eqref{DSw3}.   In practice,  the term $-\taun\div(|e(u\kk)|^{\gamma-2} e(u\kk))$  
will have a key role in proving that the pseudomonotone operator in terms of which the (approximate) discrete system 
can be reformulated  is coercive, and thus such system admits solutions. 
Because of this additional regularization, it will be necessary to further approximate the initial datum $u_0$
from \eqref{hyp-init} by a sequence (cf.\ \cite[p.\ 56, Corollary 2]{Bure98SSOD})
\begin{equation}
\label{tilde-uzero}
{(u_n^0)}_n \subset
W_{\mathrm{D}}^{1,\gamma}(\Omega;\R^d) \quad \text{such that } u_n^0
\to u_0 \quad \text{in } H_{\mathrm{D}}^1 (\Omega;\R^d) \text{ as }
n \to \infty\,,
\end{equation}
where $W_{\mathrm{D}}^{1,\gamma}(\Omega;\R^d)=\{v\in W^{1,\gamma}(\Omega;\R^d)\colon v=0
\text{ on }\partial_\mathrm{D}\Omega \text{ in the trace sense}\}$.

For  the weak formulation of the discrete heat equation,
we  also need to introduce the function space appropriate for $\theta$,
dependent on a given $\bar{z} \in L^\infty(\Omega)$
\begin{equation*}
\Xteta_{\bar z}:= \big\{ \vartheta \in H^1 (\Omega)\colon
\int_\Omega \mathbb{K}(\bar z ,\vartheta)\nabla \vartheta \ps \nabla v \dd x
\; \text{is well defined for all } v \in H^1(\Omega) 
\big\}\,.
\end{equation*}
 In fact, the above space encodes   the sharpest property that we will be  able to obtain for our discrete solutions ${(u\kk,z\kk,\theta\kk)}_{k=1}^n$. This will  be proven by  approximating 
system \eqref{time-discrete-scheme} by truncations, so that in  the truncated system the heat equation is standardly weakly formulated in $ H^1(\Omega)^*$. Passing to the limit
as the truncation parameter tends to  infinity, with a careful comparison argument in the discrete heat equation   (cf.\ the proof of   \cite[Lemma 4.4]{RoRoESTC14} 
for all details),   it is possible to prove that $\theta \kk \in  \Xteta_{z \kk} $. 
\par
We consider the following weakly-coupled discretization scheme
(in fact, only the momentum and the heat equation are coupled,
while the discrete equation for $z$ is decoupled from them):
\begin{problem}
\label{prob-discrete}
Starting from
\[
u_n^0\,, 
\quad z_n^0:=z_0\,, \quad \w_n^0:=\theta_0\,,
\]
and setting $u_n^{-1}:=u_n^0 - {\taun} \dot u_0$, find
${(u\kk,z\kk,\theta\kk)}_{k=1}^n \subset
W^{1,\gamma}_{\mathrm{D}}(\Omega;\R^d) \times W^{1,q}(\Omega) \times
\Xteta_{z\kk}$ such that  the following hold:
\begin{subequations}
\label{time-discrete-scheme}
\begin{compactitem}
\item[-] {Minimality of }$z\kk$:
\begin{align}
\label{DSw1}
z\kk \in \argmin \left\{ \calR_1(z-z_n^{k-1}) + \E(t\kk,u\km,z) \colon
z\in \calZ \right\}\,;
\end{align}
\item[-] {Time-discrete weak formulation of the coupled momentum balance and the heat equation}:\\
Find $u\kk \in W^{1,\gamma}_{\mathrm{D}} (\Omega;\R^d)$
and $\theta\kk \in \Xteta_{z\kk}$ such that
\begin{align}
\label{DSw2}
&
\begin{aligned}
&
\rho\integ{\Om}{\tfrac{u\kk-2u\km+u\kmm}{\taun^2}\ps v}{x}\\
& \phantom{=} +\integ{\Om}{\left( \DD(z\km,\theta\km) \,e\left(\tfrac{u\kk-u\km}{\taun}\right)  
+ \CC(z\kk) e(u\kk)
-\theta\kk\,\BB +\taun |e(u\kk)|^{\gamma-2} e(u\kk) \right)\psm e(v)}{x}\\
&=
 \pairing{}{H^1_\mathrm{D}(\Omega;\R^d)}{\bigF\kk}{v}
\hspace*{19em} \text{for all } v \in W^{1,\gamma}_{\mathrm{D}} (\Omega;\R^d) \,,
\end{aligned}
\\[3mm]
\label{DSw3}
&
\begin{aligned}
&
\integ{\Om}{\tfrac{\w\kk-\w\km}{\taun}\testw}{x}+
\integ{\Om}{\ \KK(z\kk,\w\kk)\nabla \w\kk \ps\nabla\testw}{x} \\
&= \integ{\Om}{\tfrac{z\km-z\kk}{\taun}\,\testw}{x}
+\integ{\Om}{\left(\DD(z\km,\theta\km)\,e\left(\tfrac{u\kk-u\km}{\taun}\right)-\theta\kk\,\BB\right)\psm 
e\left(\tfrac{u\kk-u\km}{\taun}\right)
\testw}{x}
\\
&\phantom{=}
+ \integ{\partial\Om}{\hs\kk\testw}{\acca^{d-1}(x)} + \pairing{}{H^1(\Omega)}{\hv\kk}{\eta}
\hspace*{13em} \text{for all }\eta \in H^1(\Omega)\,.
\end{aligned}
\end{align}
\end{compactitem}
\end{subequations}
\end{problem}
\par
The above time-discrete problem has been carefully designed in such
a way as to be  weakly-coupled in that, for each
$k\in\{1,\ldots,n\}$, it can be solved successively starting from
\eqref{DSw1} and then solving the system \eqref{DSw2}--\eqref{DSw3}.
 See \cite[Remark 4.3]{RoRoESTC14} for similar ideas.
\par
Our existence result for Problem
\ref{prob-discrete} reads:
\begin{prop}
\label{prop:exist-discrete-problem}
Let the assumptions of Theorem~\ref{thm:main} hold true. Then 
there exists a solution
$${(u\kk,z\kk,\theta\kk)}_{k=1}^n \subset
W^{1,\gamma}_{\mathrm{D}}(\Omega;\R^d)\times W^{1,q}(\Omega) \times
H^1(\Omega)$$ 
to Problem \ref{prob-discrete},
%
 satisfying the following properties: 
There exists $\widetilde{\theta}>0$ such that
\begin{equation}
\label{tetak-pos}
\theta\kk \geq \widetilde{\theta}>0 \qquad \text{for all } k=1,\ldots,n\,, \quad \text{for all } n \in \N\,.
\end{equation}
Furthermore, if in addition \eqref{h-pos+} holds, then
\begin{equation}
\label{tetak-pos+h}
\theta\kk \geq \max\left\{ \widetilde{\theta}, \sqrt{H_*/\bar c} \right\}>0
\qquad \text{for all } k=1,\ldots,n\,, \quad \text{for all } n \in \N\,,
\end{equation}
with $H_*$ and $\bar c$ from \eqref{h-pos+}. 
\end{prop}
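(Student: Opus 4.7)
The proof proceeds by induction on $k$: assuming $(u_n^{k-1}, u_n^{k-2}, z_n^{k-1}, \theta_n^{k-1})$ is given, we construct $(u\kk, z\kk, \theta\kk)$ by exploiting the weakly coupled structure of Problem~\ref{prob-discrete}. First we solve the unilateral minimization \eqref{DSw1} for $z\kk$, and then the coupled pair \eqref{DSw2}-\eqref{DSw3} for $(u\kk, \theta\kk)$. For the first step, the functional $z \mapsto \calR_1(z - z_n^{k-1}) + \calE(t\kk, u\km, z)$ is weakly lower semicontinuous on $W^{1,q}(\Omega)$: the elastic part is continuous in $z$ under weak $W^{1,q}$-convergence (since $u\km$ is fixed and $\CC \in \rmC^{0,1}$), the gradient contribution $\calG$ is lower semicontinuous by Remark~\ref{LSCG}, and $\calR_1$ is convex lower semicontinuous. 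Coercivity on $\calZ$ follows from \eqref{G-growth} together with the pointwise constraint $z \in [0,1]$ encoded in $\calZ$, and $\calR_1$ also enforces $z\kk \leq z\km$ a.e. The direct method produces $z\kk \in \calZ$.

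The coupled system \eqref{DSw2}-\eqref{DSw3} is attacked through a further approximation. We truncate every occurrence of $\theta\kk$ in the coupling term $-\theta\kk\,\BB$ of \eqref{DSw2}, in the term $-\theta\kk\,\BB\psm e(\cdot)$ on the right-hand side of \eqref{DSw3}, and in the temperature argument of $\KK(z\kk, \cdot)$, by $T_M(\theta\kk)$, where $T_M$ is a smooth truncation at level $M \in \N$. The resulting truncated system is then recast as a single operator equation $\mathcal{A}_M(u, \theta) = \ell_M$ on $\mathcal{V} := W^{1,\gamma}_\mathrm{D}(\Omega;\R^d) \times H^1(\Omega)$. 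The decisive feature is that the regularization $\taun\, |e(u)|^{\gamma-2} e(u)$ with $\gamma > 4$ endows $\mathcal{A}_M$ with pseudomonotonicity and coercivity on $\mathcal{V}$, even though the quadratic $|e(\tfrac{u - u\km}{\taun})|^2$-contribution on the right-hand side of the heat equation would otherwise break both properties; after truncation this term is Lipschitz in $\theta$. Br\'ezis' existence theorem for pseudomonotone coercive operators then provides a solution $(u\kkm, \theta\kkm)$ of the truncated problem.

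To remove the truncation we derive $M$-uniform estimates on $\theta\kkm$: testing the truncated heat equation with suitable powers of $\theta\kkm$ and exploiting the growth \eqref{ass-K-b} (with $\kappa < \kappa_d$) yields a uniform bound on $\theta\kkm$ in some $L^p(\Omega)$ with $p > 2$, which provides enough compactness to send $M \to \infty$ and recover \eqref{DSw2}-\eqref{DSw3} together with the membership $\theta\kk \in \Xteta_{z\kk}$; the latter follows by a comparison argument in the discrete heat equation in the spirit of \cite[Lemma~4.4]{RoRoESTC14}. The positivity assertions are then obtained by Stampacchia truncation: assuming by induction $\theta\km \geq \theta_\sharp$ with $\theta_\sharp := \max\{\widetilde\theta, \sqrt{H_*/\bar{c}}\}$, we test \eqref{DSw3} with $-(\theta_\sharp - \theta\kk)^+$ and use the pointwise Young-type inequality
\begin{equation*}
\DD(z\km,\theta\km)\,e\!\left(\tfrac{u\kk - u\km}{\taun}\right)\!\psm\!e\!\left(\tfrac{u\kk - u\km}{\taun}\right) - \theta\kk\,\BB\psm e\!\left(\tfrac{u\kk - u\km}{\taun}\right) \;\geq\; -\tfrac{\bar{c}}{2}\,|\theta\kk|^2,
\end{equation*}
so that under \eqref{h-pos+} the right-hand side of the heat equation restricted to $\{\theta\kk < \theta_\sharp\}$ is bounded below by $H_* - \bar{c}\,\theta_\sharp^2/2 \geq H_*/2 \geq 0$; this forces $(\theta_\sharp - \theta\kk)^+ \equiv 0$. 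The weaker bound \eqref{tetak-pos} with $\widetilde\theta$ independent of $H_*$ is obtained analogously, with $\widetilde\theta > 0$ determined by a discrete Gronwall iteration accounting for the ``reactive'' contribution of $-\theta\kk\,\BB$, invoking the a priori $L^\gamma$-bound on $e((u\kk - u\km)/\taun)$ that stems from the $\gamma$-regularization.

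The main obstacle is the coupled step: without the regularizing $\gamma$-term and without the $T_M$-truncation, neither the momentum-temperature coupling $-\theta\,\BB$ nor the quadratic dissipation on the right-hand side of the heat equation fits into a monotone-operator framework, and a direct fixed-point iteration does not close either. The combined pseudomonotone-truncation strategy, adapted from \cite{FePeRo09ESPT,RoRoESTC14} to the present unidirectional setting with damage- and temperature-dependent viscous tensor, is what resolves both difficulties simultaneously.
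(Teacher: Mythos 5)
Your overall architecture coincides with the paper's: direct method for \eqref{DSw1}; truncation of the temperature occurrences plus recasting \eqref{DSw2}--\eqref{DSw3} as a pseudomonotone, coercive operator equation on $W^{1,\gamma}_{\mathrm{D}}(\Omega;\R^d)\times H^1(\Omega)$, with coercivity bought by the $\taun|e(u)|^{\gamma-2}e(u)$ term, $\gamma>4$; then $M$-uniform estimates and the limit $M\to\infty$. Up to that point you are reproducing the paper's proof (which follows \cite{RoRoESTC14}), and the refined bound under \eqref{h-pos+} can indeed be obtained by a Stampacchia-type test as you propose, provided you run it at the level $\sqrt{H_*/\bar c}$ rather than at $\max\{\widetilde\theta,\sqrt{H_*/\bar c}\}$: if $\widetilde\theta$ happens to be the larger value, the inequality $H_*-\tfrac{\bar c}{2}\theta_\sharp^2\ge H_*/2$ you invoke is false, and the bound by $\widetilde\theta$ must come from the unconditional estimate \eqref{tetak-pos} anyway.

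The genuine gap is precisely \eqref{tetak-pos}. There is no positive lower bound on $\hv$ available there, so at any \emph{fixed} level $\theta_\sharp>0$ the right-hand side of the heat equation on $\{\theta\kk<\theta_\sharp\}$ is only bounded below by $-\tfrac{\bar c}{2}(\theta\kk)^2<0$, and a fixed-level truncation argument cannot close. Your proposed repair --- a ``discrete Gronwall iteration'' invoking the $L^\gamma$-bound on $e\big(\tfrac{u\kk-u\km}{\taun}\big)$ coming from the regularization --- does not work as stated: that bound is not uniform in $n$ (it degenerates as $\taun\downarrow0$), whereas \eqref{tetak-pos} requires $\widetilde\theta$ independent of both $k$ and $n$; moreover the strain term is not what needs controlling, since Young's inequality absorbs it into the nonnegative viscous dissipation, leaving exactly the reactive term $-\bar c\,\theta^2$. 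What is needed is a \emph{$k$-dependent, decaying barrier}: the paper compares $\theta\kk$ with the solution $v_k$ of the finite-difference equation $v_k=v_{k-1}-\taun\bar c\,v_k^2$, $v_0=\theta_*$, which satisfies $v_k\ge\widetilde\theta:=(\bar cT+1/\theta_*)^{-1}$ uniformly in $n$; the comparison is implemented by testing the difference of \eqref{crucial-for-pos} and \eqref{diffeq} with the truncation $L_\eps(v_k-\theta\kk)$ and letting $\eps\downarrow0$, and it requires the preliminary step $\theta\kk\ge0$ (obtained by testing with $-(\theta\kk)^-$) to fix the sign of $(\theta\kk)^2-v_k^2$ on the relevant set. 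Note also that the paper establishes this positivity already for the truncated solutions $\theta\kkm$, uniformly in $M$, because a sign on $\theta\kkm$ is needed to turn the energy test into the $L^1$-bound \eqref{estimate-1} used in your own ``$M$-uniform estimates'' step; deferring all positivity until after $M\to\infty$, as you do, leaves that step unsupported.
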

While the existence of solutions for \eqref{DSw1} follows from the
direct method of the calculus of variations in a straightforward
manner, the existence proof for system \eqref{DSw2}--\eqref{DSw3} is
more involved, due to the quasilinear character of the discrete heat
equation.  This is due to the fact that the viscous dissipation
$\DD(z\km,\theta\km)e\big(\tfrac{u\kk-u\km}{\tau_{n}}\big):e\big(\tfrac{u\kk-u\km}{\tau_{n}}\big)$
as well as the thermal stresses $\theta\km\,\BB \psm
e\big(\tfrac{u\kk-u\km}{\tau_{n}}\big)$ only happen to be of
 $L^1$-summability  as a consequence of \eqref{DSw2}. Observe in
particular that $\CC(z_n^k),\DD(z\km,\theta\km)\in
\big(L^\infty(\Omega)\cap W^{1,q}(\Omega)\big)^{d \times d \times d
\times d}$, and we do not impose the assumption $q>d$, which would
guarantee the continuity of the coefficients. As it is demonstrated
by the counterexample in \cite{NecSti76PTLE}, in absence of
continuous coefficients, it is not ensured that the solution of
\eqref{DSw2} enjoys elliptic regularity. Because of this expected
lack of additional regularity, the existence of solutions for the
coupled system \eqref{DSw2}--\eqref{DSw3} will be verified by means
of an approximation procedure, in which the $L^1$ right-hand side in
\eqref{DSw3} is replaced by a sequence of truncations. 
For this we proceed along the lines of \cite{RoRoESTC14} where the
analysis of a time-discrete system analogous to
\eqref{DSw1}--\eqref{DSw3} was carried out. The existence of
solutions to the approximate discrete system in turn follows from an
existence result for a wide class of elliptic equations, in the
framework of  the Leray-Schauder theory of pseudo-monotone
operators. We will then conclude the existence of solutions to
\eqref{DSw2}--\eqref{DSw3} by passing to the limit with the
truncation parameter. In such a step, we shall exploit the strict
positivity of the approximate discrete temperatures, cf.\
\eqref{tetakm-pos} below. This property and the convergence of the
approximate discrete temperatures clearly imply the strict
positivity \eqref{tetak-pos}. 
 Arguing directly on the non-truncated discrete heat equation,  we will also obtain 
the enhanced positivity property   \eqref{tetak-pos+h} which,
unlike \eqref{not-tunable}, in fact provides a tunable threshold
from below to the discrete temperatures.

In the forthcoming proof, we will  use that for any
\emph{convex} (differentiable) function $\psi: \R \to
(-\infty,+\infty]$
\begin{equation}
\label{cvx-ineq} \psi(x) -\psi(y) \leq \psi'(x) (x{-}y) \quad
\text{for all } x,y\in \dom(\psi)\,.
\end{equation}
\begin{proof}
{\bf Existence of a minimizer to \eqref{DSw1}: } We first verify the coercivity of the
functional
$z\mapsto \calE(t_n^k, u\km,z)+\calR_1(z-z_n^{k-1}) \colon W^{1,q}(\Omega)\to\R\cup\{\infty\}$,
where $\calR_1$ is the dissipation potential
\eqref{integrated-1-dissip}. Indeed, by the positivity of
$\calR_1(\cdot)$ and assumption \eqref{G-growth} on the density $G$
we have
\begin{equation*}
\calE(t_n^k,u\km,z)+\calR_1(z-z_n^{k-1})\geq\int_\Omega G(z,\nabla z)\,\mathrm{d}x -C
\geq C_G^1\|z\|_{W^{1,q}(\Omega)}^q-C_G^1\calL^d(\Omega)-C\,,
\end{equation*}
where we also used that $G(z(x),\nabla z(x))<\infty$ implies
$z(x)\in[0,1]$, cf.\ \eqref{Gind}. By the convexity   and the
continuity assumptions \eqref{Gcont}--\eqref{Gconv} on $G$ and by
the properties of $\calR_1$ we conclude that the functional
$$\calE(t_n^k,u\km,\cdot)+\calR_1((\cdot)-z_n^{k-1})\colon W^{1,q}(\Omega)\to\R\cup\{\infty\}$$
is weakly sequentially lower semicontinuous. Since
$\calZ=\{ z \in W^{1,q}(\Omega)\colon z \in [0,1] \ \aein \Omega \}$, see
\eqref{statesp}, is a closed subset of a reflexive Banach space, the
direct method of the calculus of variations ensures the existence of
a minimizer $z_n^k\in \calZ$.
\par
{\bf Existence of  an approximate  solution to system
\eqref{DSw2}--\eqref{DSw3}: }
As in  \cite[proof of Lemma 4.4]{RoRoESTC14}, we approximate
\eqref{DSw2}--\eqref{DSw3} by a suitable truncation of the heat
conductivity matrix $\KK$, in such a way as to reduce to an elliptic
operator with \emph{bounded} coefficients in the discrete heat
equation.  In a similar manner we treat the $L^1$ right-hand sides
in order to improve their integrability. Accordingly, we truncate
all occurrences of $\theta\kk$ in the respective terms of system
\eqref{DSw2}--\eqref{DSw3}.  We show that the approximate system
thus obtained admits solutions by resorting to an existence result
from the theory of elliptic systems featuring pseudo-monotone
operators drawn from \cite{Roub05NPDE}. Hence, we pass to the limit
with the truncation parameter and conclude the existence of
solutions to \eqref{DSw2}--\eqref{DSw3}.

Let   $z\kk$ be a  solution of  \eqref{DSw1}.
In what follows,  we  shall  denote by $\overline{\KK}= \overline{\KK}(x,\theta)$ the function
${\KK}(z\kk(x),\theta)$. Let $M>0$. We introduce   the truncation operator
\begin{equation*}
\mathcal{T}_M(\theta) :=  \begin{cases}
0 & \text{if }\theta<0,
\\
\theta & \text{if } 0 \leq \theta \leq M,
\\
M & \text{if } \theta>M,
\end{cases}
\end{equation*}
and we set 
\begin{equation*}
\overline{\KK}_M : \Omega \times \R \to \R^{d \times d}\,, \qquad  \overline{\KK}_M (x,\theta)
:=\overline{\KK}(x,\mathcal{T}_M(\theta)).
\end{equation*}
Since $\KK \in \rmC^0 (\R \times \R; \R^{d \times d})$ and  $0 \leq
z\kk (x) \leq 1 $ for almost all $x \in \Omega$, it is immediate to
check that there exists a positive constant $C_M$ such that
$|\overline{\KK}_M (x,\theta)| \leq C_M$ for almost all $x \in
\Omega$ and $\theta \in \R$. The truncated version of system
\eqref{DSw2}--\eqref{DSw3} thus reads:  find $(u,\theta) \in
W^{1,\gamma}_{\mathrm{D}}(\Omega;\R^d)\times H^1(\Omega)$ such that
\begin{subequations}
\label{system-truncated}
\begin{align}
\label{DSw2-trunc}
&
\begin{aligned}
&\rho\integ{\Om}{\tfrac{u-2u\km+u\kmm}{\taun^2}\ps v}{x}\\
&\phantom{=}+\integ{\Om}{\left(\DD(z\km,\theta\km)\,e\left(\tfrac{
u-u\km}{\taun}\right) + \CC(z\kk)e( u)
-\mathcal{T}_M(\theta)\,\BB +\taun |e( u)|^{\gamma-2} e( u) \right)\psm e(v)}{x}\\
&=
\pairing{}{H^1_\mathrm{D}(\Omega;\R^d)}{\bigF\kk}{v}
\hspace*{22em} \text{for all } v \in W^{1,\gamma}_{\mathrm{D}} (\Omega;\R^d) \,,
\end{aligned}
\\[2mm]
\label{DSw3-trunc}
&
 \begin{aligned}
&\integ{\Om}{\tfrac{\w-\w\km}{\taun}\testw}{x}+
\integ{\Om}{\overline\KK_M(x,\w)\nabla \w \ps\nabla\testw}{x} \\
&= \integ{\Om}{\tfrac{z\km-z\kk}{\taun}\,\testw}{x}
+\integ{\Om}{\left(\DD(z\km,\theta\km)\,e\left(\tfrac{
u-u\km}{\taun}\right) -\mathcal{T}_M(\theta)\,\BB \right)\psm e\left(\tfrac{ u-u\km}{\taun}\right)
\testw}{x}
\\
&\phantom{=}+  \integ{\partial\Om}{\hs\kk\testw}{\acca^{d-1}(x)}
+ \pairing{}{H^1(\Omega)}{\hv\kk}{\eta}
\hspace*{16em}
\text{for all }\eta \in H^1(\Omega)\,.
\end{aligned}
\end{align}
\end{subequations}
Observe that system
\eqref{system-truncated} rewrites as
\begin{subequations}
\label{more-explicit}
\begin{align}
\label{more-explicit1}
&
\begin{aligned}
&\rho\integ{\Om}{ u\ps v}{x}
\\
&\phantom{=}+\taun\!\integ{\Om}{\left(
\DD(z\km,\theta\km) e( u)+ \taun \CC(z\kk)e( u) -\taun\mathcal{T}_M(
\theta)\,\BB +\taun^2 |e( u)|^{\gamma-2} e( u) \right)\psm e(v)}{x}
\\
&=
\rho\integ{\Om}{(2u\km -u\kmm) \ps v}{x}
+\taun\!\integ{\Om}{ \DD(z\km,\theta\km)e(u\km) \psm e(v)  }{x}
+ \taun^2\pairing{}{H^1_\mathrm{D}(\Omega;\R^d)}{\bigF\kk}{v}
\\
&
\hspace*{30.8em}
\text{for all } v \in W^{1,\gamma}_{\mathrm{D}} (\Omega;\R^d) \,,
\end{aligned}
\\[2mm]
\label{more-explicit2}
&
\begin{aligned}
&\integ{\Om}{ \theta  \, \testw}{x}+ \taun
\integ{\Om}{ \overline\KK_M(x, \theta )\nabla  \theta
 \ps\nabla\testw}{x} -\tfrac{1}{\taun}
\integ{\Om}{\DD(z\km,\theta\km)e( u)\psm e( u) \testw}{x}
\\
&\phantom{=}
 +
\integ{\Om}{\mathcal{T}_M( \theta)\,\BB\psm e( u)\testw}{x}
+\tfrac2 \taun \! \integ{\Om}{\DD(z\km,\theta\km)e( u)\psm
e(u\km)\testw}{x}   -  \integ{\Om}{\mathcal{T}_M(
\theta)\,\BB\psm e(u\km)\testw}{x}
\\
&= \integ{\Om}{\w\km\testw}{x} +
\tfrac1\taun\!\integ{\Om}{\DD(z\km,\theta\km)e(u\km)\psm e(u\km)
\testw}{x}
 \\
&\phantom{=}+ \integ{\Om}{(z\km-z\kk)\testw}{x}
+\taun \! \integ{\partial\Om}{\hs\kk\testw}{\acca^{d-1}(x)}
+ \taun \pairing{}{H^1(\Omega)}{\hv\kk}{\eta}
\hspace*{4.8em} \text{for all }\eta \in H^1(\Omega)\,,
\end{aligned}
\end{align}
\end{subequations}
which in turn can be recast in the form
\begin{equation*}
\mathcal{A}_{k,M} ( u,\theta)= B_{k-1}\,.
\end{equation*}
Here, $\mathcal{A}_{k,M} : W^{1,\gamma}_{\mathrm{D}} (\Omega;\R^d)
\times H^1(\Omega) \to W^{1,\gamma}_{\mathrm{D}} (\Omega;\R^d)^*
\times H^1(\Omega)^*$ is the elliptic operator, acting on the
unknown $(u, \theta)$, defined by the left-hand sides of
\eqref{more-explicit1} and  \eqref{more-explicit2}, while $B_{k-1}$
is the vector defined by the right-hand side terms in system
\eqref{more-explicit}. It can be verified that $\mathcal{A}_{k,M}$
is a pseudo-monotone operator in the sense of \cite[Chapter II,
Definition 2.1]{Roub05NPDE}:  without entering into details, we may in fact observe that 
 $\mathcal{A}_{k,M}$ is given by the sum of either bounded, radially continuous, monotone operators, 
  or totally continuous operators, cf.\  \cite[Chapter II,
Definition 2.3, Lemma 2.9, Cor.\ 2.12]{Roub05NPDE}. 
 Furthermore,
 crucially exploiting the presence of the regularizing  term $-\taun\div(|e(u)|^{\gamma-2} e(u))$, with $\gamma>4$,
  in the discrete momentum balance, 
 we may show that 
  $\mathcal{A}_{k,M}$ is
coercive on $ W^{1,\gamma}_{\mathrm{D}} (\Omega;\R^d) \times
H^1(\Omega)$. This can be checked directly on system
\eqref{more-explicit}, testing \eqref{more-explicit1} by $ u$ and
\eqref{more-explicit2} by $ \theta$ and adding the resulting
equations: it is then sufficient to deduce from these calculations
an estimate for $\| u\|_{W_\mathrm{D}^{1,\gamma} (\Omega;\R^d) }$
and $\| \theta \|_{H^1(\Omega)}$. We refer to \cite[proof of Lemma
4.4]{RoRoESTC14} for all the detailed calculations, 
 which show that, since $\gamma>4$,  the term
$-\taun\div(|e(u)|^{\gamma-2} e(u))$
can absorb
 the
quadratic terms in $e( u)$ on the right-hand side of
\eqref{DSw3-trunc}. In this way, it is possible to
carry out the test of \eqref{more-explicit2} by $ \theta$ and 
   obtain the bound for $\|
\theta\|_{H^1(\Omega)}$: for this, one  also exploits   that the operator with
coefficients $\overline\KK_M$ is uniformly elliptic thanks to
\eqref{ass-K-b}. Since $\mathcal{A}_{k,M}$ is pseudo-monotone and
coercive, we are in a position to apply \cite[Chapter II, Theorem
2.6]{Roub05NPDE} to system \eqref{more-explicit},  for every
$M\in\N$  thus deducing the existence of a solution
$(u,\theta)$  which shall be hereafter denoted as
$(u\kkm,\theta\kkm)$.
\par
 {\bf Positivity of $\theta\kkm$:}  First of all, we show that $\theta \kkm \geq 0$ a.e.\ in $\Omega$.  To this end, we test the 
(approximate) discrete heat equation \eqref{DSw3-trunc} by $-(\theta\kkm)^- = \min\{ \theta \kkm, 0\}$. We thus obtain 
\[
\begin{aligned}
&
\integ{\Om}{\tfrac{1}{\tau_n} |(\theta\kkm)^-|^2}{x}  +  \integ{\Om}{\tfrac{1}{\tau_n}  \theta \km  (\theta\kkm)^-}{x} 
+ 
\integ{\Om}{\overline\KK_M(x,\theta \kkm)\nabla( \theta\kkm)^-  \ps \nabla( \theta\kkm)^-   }{x} \\
&= -  \integ{\Om}{\tfrac{z\km-z\kk}{\taun}\,\theta\kkm}{x}
- \integ{\Om}{\DD(z\km,\theta\km)\,e\left(\tfrac{
u-u\km}{\taun}\right) \psm e\left(\tfrac{ u-u\km}{\taun}\right) \theta\kkm}{x}
\\
&
\phantom{=}
- \integ{\Om}{\mathcal{T}_M(\theta)\,\mathbb{B} \psm e\left(\tfrac{ u-u\km}{\taun}\right) \theta\kkm}{x} 
+  \integ{\partial\Om}{\hs\kk\theta\kkm }{\acca^{d-1}(x)}
+ \pairing{}{H^1(\Omega)}{\hv\kk}{\theta\kkm }
\end{aligned}
\]
Now, the second term on the left-hand side is non-negative, since we may suppose, by induction,  that  $ \theta \km  \geq 0$ a.e.\ in $\Omega$
(in fact, for $k=0$
the \emph{strict} positivity \eqref{tetak-pos} holds with $\widetilde\theta= \theta_*$, thanks to \eqref{wzero}). The third term is also non-negative, by ellipticity of $\overline\KK_M$. As for the right-hand side,  the first, second, fourth, and fifth terms are negative, 
since $z \km \geq z\kk$ a.e.\ in $\Omega$, and by the positivity properties of the data $\mathbb{D}$, $H$, and $h$. The very definition of the truncation operator $\mathcal{T}_M$ does ensure that 
the third term  is  null. 
All in all, we conclude that 
$\integ{\Om}{|(\theta\kkm)^-|^2}{x} \leq 0$, whence $(\theta\kkm)^- = 0 $ a.e.\ in $\Omega$, i.e.\  the desired positivity.  
Let us now prove that $\theta\kkm$ fulfills  \eqref{tetak-pos}, namely
\begin{equation}
\label{teta-kk-m-pos}
\theta\kkm \geq \widetilde{\theta}>0 \quad \aein\, \Omega. 
\end{equation} 
 Following the lines  of \cite[proof of Lemma 4.4]{RoRoESTC14}
we develop a comparison argument drawn from \cite{FePeRo09ESPT}.
In this context, we will use the following estimate
\begin{equation}
\label{pos1}
\begin{split}
&\DD(\bar z, \bar \theta)\bar e\psm\bar e -\mathcal{T}_M( \bar\theta)\,\BB \psm\bar
e  \geq C_\mathbb{D}^1 |\bar e|^2 -|\bar e|C_\BB |\bar\theta| \geq
\tfrac{C_\mathbb{D}^1}{2}|\bar e|^2-\tfrac{ {(C_\BB)^2}
}{2C_\mathbb{D}^1}|\bar\theta|^2\,.
\end{split}
\end{equation}
Exploiting \eqref{pos1} and also using
 that $z_{k-1} \geq z_k$ a.e.\ in $\Omega$, 
 the positivity \eqref{ass-dataHeat} of the data $\hv$ and $\hs$
and of $\theta\km$, we deduce from \eqref{DSw3} that
$\theta\kkm$ fulfills
\begin{equation}
\label{crucial-for-pos}
\int_\Omega \theta\kkm \eta \dd x
+ \taun\!\int_\Omega
\overline\KK_M(z_n^k,\theta\kkm) \nabla \theta\kkm \ps \nabla \eta \dd x
\geq \int_\Omega \theta\km \eta \dd x
-\taun \bar{c} \int_\Omega \left(\theta\kkm\right)^2 \eta  \dd x \qquad
\end{equation}
for all   $\eta \in H^1(\Omega) \cap  L^\infty(\Omega) $   with  $\eta \geq 0$  a.e.\ in  $\Omega$,
with the constant $\bar{c} = \tfrac{ {(C_\BB)^2}}{2C_\mathbb{D}^1}$ independent of $k$.
Hence, we compare $\theta\kkm$ with the solution $v_k\in\R$ of the finite difference equation
\begin{equation}
\label{diffeq} v_k= v_{k-1} -\taun \bar{c} \,v_k^2, \quad
k=1,\ldots,n,  \qquad \text{with } v_0:= \theta_*>0\,.
\end{equation} 
Now, it is possible to show that
 \begin{equation}
\label{not-tunable} v_k \geq \widetilde\theta:= \left(\bar c T+
\frac1{\theta_*}\right)^{-1}.
\end{equation}
We test the difference of
\eqref{crucial-for-pos} and \eqref{diffeq} by the function $L_\varepsilon(v_k {-}\theta\kkm)$, with 
\[
L_\varepsilon(x) :=  \begin{cases}
0 & \text{if }x\leq 0,
\\
\tfrac x\eps & \text{if } 0 < x<\eps,
\\
1 & \text{if } x\geq \eps,
\end{cases}
\]
and 
we conclude that 
\begin{equation}
\label{crucial-H-eps}
\integ{\Om}{(v_k{-}v_{k-1}) {-} (\theta\kkm {-} \theta\km) H_\eps(v_k{-} \theta\kkm )}{x}  = \taun \bar{c}  \integ{\Om}{\left( \left(\theta\kkm\right)^2{-} v_k^2\right) H_\eps(v_k{-} \theta\kkm )}{x}    \leq 0\,.
\end{equation}
Observe that, in order to conclude that the above integral is negative, it was essential to preliminarily show that $\theta\kkm \geq 0$ a.e.\ in $\Omega$. 
Assume now that $\theta\km \geq v_{k-1}$ (which is true for $k=0$, cf.\ \eqref{wzero}). 
Letting $\eps \downarrow  0$ in \eqref{crucial-H-eps} yields that $\theta\kkm \geq v_k$ a.e.\ in $\Omega$. Hence, in view of 
\eqref{not-tunable} we conclude  the desired \eqref{teta-kk-m-pos}.  
\par
{\bf  Passage to the limit as $M{\to}\infty$: } 
We now consider a family ${(u\kkm,\theta\kkm)}_M$ of solutions to
the truncated system \eqref{system-truncated}: we shall derive some
a priori estimates on ${(u\kkm,\theta\kkm)}_M$ which will allow us
to extract a (not relabeled) subsequence converging as
$M{\to}\infty$ to a solution of system  \eqref{DSw2}--\eqref{DSw3}.
For the ensuing calculations, it is crucial to observe that
\begin{equation}
\label{tetakm-pos}
\exists\,\widetilde{\theta} \quad \text{such that}\quad
\theta\kkm \geq \widetilde{\theta}>0 \quad \text{for all } M>0\,.
\end{equation}
This follows from the very same arguments as for \eqref{tetak-pos}:
indeed, notice that $\widetilde{\theta}$ does not depend on $M$.

Hence, let us first  test \eqref{DSw2-trunc} by $(u\kkm {-}
u\km)/\taun$, \eqref{DSw3-trunc} by $1$, and add the resulting
relations. Taking into account the cancelation of the coupling terms
between \eqref{DSw2-trunc} and \eqref{DSw3-trunc},  by convexity,
cf.\ \eqref{cvx-ineq},  we obtain
\[
\begin{aligned}
& \tfrac{\rho}{2\taun^3}\int_\Omega |u\kkm - u\km|^2 \dd x
+\tfrac1{2\taun}\int_\Omega \CC( z\kk) e(u\kkm) \psm e(u\kkm) \dd x
+\tfrac1{\gamma} \int_\Omega |e(u\kkm)|^{\gamma} \dd x +\tfrac1{\taun}
\int_\Omega \theta \kkm \dd x
\\
&
 \leq \tfrac{\rho}{2\taun^3}\int_\Omega
|u\km - u\kmm|^2 \dd x +\tfrac1{2\taun}\int_\Omega \CC( z\kk)e(u\km)\psm
e(u\km) \dd x +\tfrac1{\gamma} \int_\Omega |e(u\km)|^{\gamma} \dd x
+\tfrac1{\taun} \int_\Omega \theta \km \dd x
\\
& \phantom{\leq}
 + \pairing{}{H^1_\mathrm{D}(\Omega;\R^d)}{\bigF\kk}{\tfrac{u\kkm-u\km}\taun}
  + \int_{\Omega} \left(
\tfrac{z\km-z\kk}{\taun} + \hv\kk \right)\dd x  + \int_{\partial\Omega}
\hs\kk \dd \mathcal{H}^{d-1}(x) \leq   C_{k,n}\,, 
\end{aligned}
\]
where the constant $ C_{k,n} $ is uniform with respect to the truncation
parameter $M$
 (but depends on $k$ and $n$).
Therefore, also on account of \eqref{tetakm-pos} we
infer that
\begin{equation}
\label{estimate-1} \| u\kkm \|_{W^{1,\gamma} (\Omega;\R^d) } + \|
\theta \kkm\|_{L^1(\Omega)} \leq   C_{k,n}\,, 
\end{equation}
 for a (possibly different) constant  $ C_{k,n}$ uniform w.r.t.\ $M$ but depending on $k$ and $n$.
From now till the end of the discussion  of the limit passage $M\to\infty$, we will omit 
the dependence of such constants on
$k$ and $n$.  
 As a  straightforward consequence of \eqref{estimate-1}, if we define
\[
\mathcal{S}_M= \{ x\in \Omega \, : \ \theta\kkm \leq M \}\,,
\]
using Markov's inequality, it is not difficult to infer from
\eqref{estimate-1} that
\begin{equation}
\label{trunca-set}
 |\Omega\backslash \mathcal{S}_M|
 \to 0
\quad \text{as } M \to \infty\,.
\end{equation}

Secondly, we test \eqref{DSw3-trunc} by $\mathcal{T}_M(\theta \kkm)$.
Using that
\begin{equation*}
\theta \, \mathcal{T}_M(\theta) \geq |\mathcal{T}_M(\theta)|^2
\quad\text{and}\quad
\overline{\KK}_M(x,\theta)\nabla \theta \cdot \nabla
\mathcal{T}_M(\theta) =
\overline{\KK}(x,\mathcal{T}_M(\theta))\nabla
\mathcal{T}_M(\theta)\cdot \nabla \mathcal{T}_M(\theta),
\end{equation*}
we obtain
\begin{equation}
\begin{split}
\label{to-close}
&\tfrac1{2\taun} \int_\Omega
|\mathcal{T}_M(\theta\kkm)|^2 \dd x +\int_\Omega
\overline{\KK}(x,\mathcal{T}_M(\theta\kkm)) \nabla
\mathcal{T}_M(\theta\kkm) \ps \nabla  \mathcal{T}_M(\theta\kkm) \dd x \\
&\leq \tfrac1{2\taun} \int_\Omega |\theta\km|^2 \dd x +
I_1+I_2+I_3+I_4\,,
\end{split}
\end{equation}
where, taking into account \eqref{assCD-3} and the previously
obtained \eqref{estimate-1}, we have
\begin{align*} \displaybreak[0]
&
\begin{aligned}
I_1 :=& \left| \int_{\Omega} \DD(z\km,\theta\km)e
\Big( \tfrac{u\kkm -u\km}{\taun}\Big)\psm
e\Big( \tfrac{u\kkm- u\km}{\taun}\Big) \mathcal{T}_M(\theta\kkm)\dd x  \right|
\\
  \leq&\ C \left\| e\Big( \tfrac{u\kkm -u\km}{\taun}\Big)\right\|_{L^4(\Omega;\R^{d \times d})}^4
+\tfrac1{8\taun} \int_\Omega |\mathcal{T}_M(\theta\kkm)|^2 \dd x\,,
\end{aligned}
\\[2mm] \displaybreak[0]
&
\begin{aligned}
 I_2  :=& \left|  \int_{\Omega} \mathcal{T}_M(\theta \kkm) \mathbb{B}:
e\Big( \tfrac{u\kkm - u\km}{\taun}\Big)  \mathcal{T}_M(\theta \kkm)\dd x  \right| \\
 \leq&\ C  \left\| e\Big( \tfrac{u\kkm -u\km}{\taun}\Big)\right\|_{L^2(\Omega;\R^{d \times d})}
\|\mathcal{T}_M(\theta\kkm) \|_{L^4(\Omega)}^2 
\\
  \leq& C    \|\mathcal{T}_M(\theta\kkm) \|_{L^4(\Omega)}^2  \leq \tfrac{c_1}4 \int_\Omega
|\nabla \mathcal{T}_M(\theta\kkm) |^2 \dd x
 +  
\|\mathcal{T}_M(\theta\kkm)\|_{L^1(\Omega)}^2\,,
\end{aligned}
\\[2mm] \displaybreak[0]
&
\begin{aligned}
I_3 :=& \left|  \int_{\Omega} \tfrac{z\kk - z\km}{\taun}
\mathcal{T}_M(\theta\kkm) \dd x \right| \leq C + \tfrac1{8\taun}
\int_\Omega |\mathcal{T}_M(\theta\kkm)|^2 \dd x\,,
\end{aligned}
\\[2mm] \displaybreak[0]
&
\begin{aligned}
I_4   :=& \left|
\pairing{}{H^1(\Omega)}{\hv\kk}{\mathcal{T}_M(\theta\kkm) } +
\int_{\partial\Omega} \hs\kk \, \mathcal{T}_M(\theta \kkm)  \dd
\mathcal{H}^{d-1}(x)\right| \\  \leq&\ \tfrac1{16\taun} \int_\Omega
|\mathcal{T}_M(\theta\kkm)|^2 \dd x + \tfrac{c_1}2 \int_\Omega
|\nabla \mathcal{T}_M(\theta\kkm)|^2 \dd x +C\,.
\end{aligned}
\end{align*}
 where in the estimate for $I_2$ we have used the
previously obtained bound \eqref{estimate-1}, the
Gagliardo-Nirenberg inequality
$\|v\|_{L^4{(\Omega)}} \leq C \| v\|_{H^1(\Omega)}^{\sigma} \| v\|_{L^1(\Omega)}^{1-\sigma} $
for $\sigma=9/10$,
 and the Young
inequality.
 As by \eqref{ass-K-b} it is
$\overline{\KK}_M\xi\cdot\xi \geq c_1|\xi|^2$, combining the above
estimates with \eqref{to-close}  and taking into account
\eqref{estimate-1},  we conclude that
\begin{equation*}
\|\mathcal{T}_M( \theta\kkm) \|_{L^2(\Omega) }
+\int_\Omega \overline{\KK}(x,\mathcal{T}_M( \theta\kkm)) \nabla
\mathcal{T}_M( \theta\kkm) \ps \nabla \mathcal{T}_M( \theta\kkm) \dd
x  \leq C\,.
\end{equation*}
Now, the  coercivity \eqref{ass-K-b} implies
\[
\begin{aligned}
& \int_\Omega \overline{\KK}(x,\mathcal{T}_M( \theta\kkm)) \nabla
\mathcal{T}_M( \theta\kkm) \ps \nabla \mathcal{T}_M( \theta\kkm) \dd
x \\ & \geq c_1 \int_{\Omega } |\mathcal{T}_M( \theta\kkm)|^{\kappa}
|\nabla \mathcal{T}_M( \theta\kkm)|^2 \dd x = c \int_\Omega |\nabla
(\mathcal{T}_M( \theta\kkm))^{(\kappa+2)/2}|^2 \dd x\,.
\end{aligned}
\]
From this, recalling the continuous embedding $H^1{\subset}L^6$
we infer
\begin{equation}
\label{estimate-3} \| \mathcal{T}_M(\theta\kkm) \|_{H^1(\Omega) } +
\| \mathcal{T}_M(\theta\kkm) \|_{L^{3\kappa+6}(\Omega) } \leq C\,.
\end{equation}

Thirdly, we test \eqref{DSw3-trunc} by $\theta \kkm $. 
Relying on estimate \eqref{estimate-3} to bound the second term on the right-hand side of
\eqref{DSw3-trunc} and mimicking the
above calculations, we obtain
\begin{equation}
\label{estimate-4} \| \theta\kkm \|_{H^{1}(\Omega)} + \| \theta\kkm
\|_{L^{3\kappa+6}(\mathcal{S}_M) }\leq C\,.
\end{equation}
With estimates \eqref{estimate-1}, \eqref{estimate-3}, and
\eqref{estimate-4}, combined with  well-known compactness arguments, we find a
pair $(u,\theta)$  such that, along a not relabeled subsequence,   
$(u\kkm,\theta\kkm)\rightharpoonup (u,\theta)$ in
$W^{1,\gamma}_\mathrm{D}(\Omega;\R^d)\times H^1(\Omega)$.
The argument for passing to the limit as $M{\to}\infty$ in \eqref{system-truncated},  
also based on \eqref{trunca-set},
is completely analogous to the one developed in the proof of \cite[Lemma 4.4]{RoRoESTC14},
therefore we refer to the latter paper for all details.
\par
 {\bf Positivity of the discrete temperature, ad \eqref{tetak-pos}: }  The strict positivity \eqref{tetak-pos} is now inherited by $\theta\kk$ in the  limit passage, as $M\to\infty$, in \eqref{tetak-pos}. 
\par
{\bf Refined positivity estimate for the discrete temperature, ad \eqref{tetak-pos+h}: }
Under the additional strict positivity
\eqref{h-pos+} of $\hv$, arguing as in the above lines we infer that
 $\theta\kk$ fulfills
\begin{equation*}
\int_\Omega \theta\kk \eta \dd x +
\taun\!\int_\Omega\KK(z_n^k,\theta\kk) \nabla \theta\kk \ps \nabla
\eta \dd x
\geq \int_\Omega \theta\km \eta \dd x
+\int_\Omega \taun
\big(H_* -\bar{c}\left(\theta\kk\right)^2\big) \eta \dd x
\end{equation*}
for all $\eta \in L^\infty(\Omega)$  with  $\eta \geq 0$  a.e.\ in
$\Omega$, with  $\bar{c}>0$ the same constant as in
\eqref{crucial-for-pos}. Hence,
we compare $\theta\kk$ with the solution $\widetilde{v}_k\in\R$
\begin{equation}
\label{diffeq-tilde}
\widetilde{v}_k= \widetilde{v}_{k-1} + \taun(H_* -\bar{c}\,\widetilde{v}_k^2)\,, \quad
k=1,\ldots,n\,, \quad \text{with }\  \widetilde{v}_0:= \max\left\{ \theta_*,  \sqrt{H_*/\bar{c}}\right\}>0\,,
\end{equation}
The very same arguments from \cite[proof of Lemma 4.4]{RoRoESTC14},   cf.\ also the previous discussion,   allow us to show
for all $k=0,\ldots, n $ that
$\theta \kk(x) \geq \widetilde{v}_k$ for almost all $x\in\Omega$.
Since $\widetilde{v}_k > \widetilde{v}_{k-1} - \taun \bar{c}\,\widetilde{v}_k^2$,
and $\widetilde v_0\geq v_0=\theta_*$,
a comparison with the solution $v_k$ of the finite-difference
equation \eqref{diffeq} and induction over $k$ yield that
$\widetilde{v}_k \geq v_k$.
Hence $\tilde{v}_k \geq \widetilde\theta>0$.
We now aim to  prove  that
\begin{equation}
\label{to-show-tilde}
\widetilde{v}_k \geq \sqrt{H_*/\bar c} \qquad \text{for all } k=1,\ldots, n\,.
\end{equation}
We proceed by contradiction and suppose that $ H_* > \bar{c}\, \widetilde{v}_{\bar k}^2$
for a certain $\bar k \in \{ 1, \ldots, n\}$. Then, we read from
\eqref{diffeq-tilde}  that
$\widetilde{v}_{\bar k} > \widetilde{v}_{\bar k-1}$.
Since $\widetilde v_{\bar k-1}>0$, we then conclude that
$ H_* >\bar{c}\, \widetilde{v}_{\bar{k}}^2> \bar{c}\, \widetilde{v}_{\bar{k}-1}^2$.
Proceeding by induction, we thus conclude that
$H_* > \bar{c}\,\widetilde{v}_0^2 $,
which is a contradiction to \eqref{diffeq-tilde}.
Therefore, \eqref{to-show-tilde} ensues.
This concludes the existence proof for system \eqref{DSw2}--\eqref{DSw3}.
\end{proof}
%
%
\subsection{Time-discrete version of the energetic formulation}
\label{ss:4.2}
%
We now define the approximate solutions to the energetic
formulation of the initial-boundary value problem for
system  \eqref{ourPDE} by suitably interpolating
the discrete solutions ${(u\kk,z\kk,\theta\kk)}_{k=1}^n$ from Proposition~\ref{prop:exist-discrete-problem}.
Namely, for $t\in (t\km,t\kk]$, $k=1,\dots,n$, we set
\begin{subequations}
\label{interpolants}
\begin{alignat}{3}
& \baru(t):=u\kk\,,\ && \barw(t):=\w\kk\,,\ && \barz(t):=z\kk \,,\\
& \underu (t) := u \km \,,\quad && \underth(t):=\theta\km \,,\quad && \underz(t):=z\km \,,
\end{alignat}
and we also consider the piecewise linear interpolants, defined by
\begin{equation}
u_n(t):=\tfrac{t-t\km}{\taun} u\kk + \tfrac{t\kk-t}{\taun} u\km \,, \;\;
z_n(t):=\tfrac{t-t\km}{\taun} z\kk + \tfrac{t\kk-t}{\taun} z\km  \,,\;\;
\theta_n (t) :=    \tfrac{t-t\km}{\taun} \theta\kk + \tfrac{t\kk-t}{\taun} \theta\km.\quad
\end{equation}
In what follows, we shall understand the time derivative of the piecewise linear
interpolant $u_n$ to be defined  also  at the nodes of the partition by
\begin{equation}
\dot{u}_n(t\kk):= \tfrac{u\kk -u\km}{\taun}\,,\quad
\quad \text{for } k=1,\ldots, n\,.
\end{equation}
 This will allow us, for instance, to state \eqref{discrete-solution} for all $t\in[0,T]$.
\end{subequations}
We also introduce the piecewise constant and linear interpolants of
the discrete data ${(\bigF\kk,\hv\kk,\hs\kk)}_{k=1}^n$ in~\eqref{local-means}
by setting for $t\in (t\km,t\kk]$
\[
\barbigF(t):=\bigF\kk\,, \qquad  \barH(t):=\hv\kk \,,\qquad   \barh(t):=\hs\kk \,,
\]
and $\bigF_n(t):=\tfrac{t-t\km}{\taun} \bigF\kk +
\tfrac{t\kk-t}{\taun} \bigF\km $  with time derivative
$\dot\bigF_n(t):=\frac{\bigF\kk-\bigF\km}{\taun}$.  It follows from~\eqref{ass-data}
that, as $n{\to}\infty$,
\begin{subequations}
\begin{alignat}{4}
\label{ass-forces-n}
\barbigF &\to \bigF \quad &&\text{in } L^p (0,T; H^1_\mathrm{D}(\Omega;\R^d)^*) \text{ for all } 1 \leq p<\infty\,, \ \ 
&\barbigF \wtos \bigF \quad &\text{in } L^\infty (0,T; H^1_\mathrm{D}(\Omega;\R^d)^*)\,,
\\
\label{ass-forces-n-bis}
\barbigF(t) &\to \bigF(t) \quad &&\text{in } H^1_\mathrm{D}(\Omega;\R^d)^* \text{ for all } t\in [0,T]\,, & &
  \\
\label{ass-forces-n-dot}
\bigF_n &\weakto \bigF \quad &&\text{in } H^1 (0,T; H^1_\mathrm{D}(\Omega;\R^d)^*)\,, \\
\label{ass-dataHeat-n}
\barH&\to \hv \quad &&\text{in } L^1(0,T; L^1(\Omega)) \cap L^2(0,T; H^1(\Omega)^*)\,, \quad
&\barh \to \hs  \quad &\text{in } L^1(0,T; L^2(\partial\Omega))\,.
\end{alignat}
\end{subequations}
Finally, we consider the piecewise constant interpolants associated with the partition, i.e.,
$$
\bartau(t):=t\kk \quad \text{and} \quad \undertau(t):=t\km \quad \text{for } t\in (t\km,t\kk] \,.
$$

In Proposition \ref{EFdiscr} we show that the approximate solutions
introduced above indeed fulfill the discrete version of the energetic formulation
from Definition \ref{def4}.
In order to check the discrete momentum equation \eqref{disc-momentum} and \eqref{disc-heat},
we shall make use of the following \emph{discrete by-part integration} formula, for every
${(r_k )}_{k=1}^{n} \subset X$ and ${( s_k  )}_{k=1}^{n}
\subset X^*$, with $X$ a given Banach space:
\begin{equation}
\label{discrete-by-part}
\sum_{k=1}^{n}  \pairing{}{X} { s_k}{r_k-r_{k-1}}
= \pairing{}{X}{s_{n}}{r_n} -\pairing{}{X}{ { s_0} }{r_0}
-\sum_{k=1}^{n} \pairing{}{X}{s_k -s_{k-1}}{r_{k-1}}.\\
\end{equation}
  In the discrete mechanical  energy inequality
\eqref{disc-mech-energy-ineq} below,  the mechanical energy
$\mathcal{E}$ will be  replaced by
\begin{equation}
\label{discr-mech-energ} \mathcal{E}_n (t,u,z):=
\int_\Omega \left(\tfrac{1}{2}\CC(z)e(u):e(u) +
\tfrac{\taun}{\gamma}|e(u)|^{\gamma} \right) \dd x +\calG(z,\nabla z) -
\pairing{}{H^1_\mathrm{D}(\Omega;\R^d)}{\barbigF(t)}{u}  \; \text{
with } \taun =\tfrac{T}{n}\,.
\end{equation}
\begin{prop}[Time-discrete version of  the  energetic formulation
\eqref{energetic-formulation}  \& total energy inequality]
\label{EFdiscr}
Let the assumptions of Theorem~\ref{thm:main} hold
true. Then the interpolants of the time-discrete solutions
$(\baru,\underu,u_n,\barz,\underz,  z_n,  \barth,\underth,\theta_n)$
obtained via Problem \ref{prob-discrete} and \eqref{interpolants} 
satisfy the following properties:
\begin{subequations}\label{discrete-solution}
\begin{compactitem}
\item
unidirectionality: \; for a.a.\ $x\in\Om$, the functions
$\barz(\cdot,x)\colon[0,T]\to[0,1]$ are nonincreasing;
\item
discrete semistability: for all $t\in [0,T]$
\begin{equation}
\label{disc-semistab} \forall\, \tilde{z}\in\calZ \colon\quad
\E_n(t,\underu(t),\barz(t))\le\E_n(t,\underu(t),\tilde z)
+\calR_1(\tilde z-\barz(t))\,;
\end{equation}
\item
discrete formulation of the momentum equation:
 for all $t\in[0,T]$ and
for every $(n+1)$-tuple ${(v\kk)}_{k=0,\dots,n}\subset
W^{1,\gamma}_{\mathrm{D}}(\Omega;\R^d)$, setting $\barv(s):=v\kk$
and $v_n(s):=\tfrac{s-t\km}{\taun} v\kk + \tfrac{t\kk-s}{\taun}
v\km$ for $s\in (t\km,t\kk]$,
\begin{equation}
\label{disc-momentum}
\begin{aligned}
&\rho\integ{\Om}{\left(\dot u_n(t)\ps \barv(t)-\dot u_0\ps
v_n(0)\right)}{x} -\rho\integlin{0}{\bartau(t)}{\integ{\Om}{\dot u_n(s{-}\taun)\ps \dot v_n(s)}{x}}{s}
\\
&\phantom{=}+\integlin{0}{\bartau(t)}{\!\integ{\Om}{\left( \DD(\underz,\underth) e(\dot u_n)+
\CC(\barz) e(\baru)
-\barw\,\BB +\taun |e(\baru)|^{\gamma-2} e(\baru) \right)\psm e(\barv)}{x}}{s}\\
&= \integlin{0}{\bartau(t)}{\pairing{}{H^1_\mathrm{D}(\Omega;\R^d)}{\barbigF}{\barv}}{s}
\,,
\end{aligned}
\end{equation}
where
we have extended $u_n$ to $(-\taun,0]$ by setting $u_n(t):= u_n^0 + t \dot u_0$;
\item
discrete mechanical energy inequality: for all $t \in[0,T]$
\begin{equation}
\label{disc-mech-energy-ineq}
\begin{aligned}
&\tfrac\rho2{\integ{\Om}{\mod{\dot u_n(t)}^2}{x}}+\E_n (t,\baru(t),\barz(t))
+ \integ{\Om}{(z_0{-}\barz(t))}{x}\\
&\phantom{=}+\integlin{0}{\bartau(t)}{\!\integ{\Om}{\left(\DD(\underz,\underth)e(\dot u_n)
{-}\barw\,\BB
\right)\psm e(\dot u_n)}{x}}{s}\\
&\le \tfrac\rho2{\integ{\Om}{\mod{\dot u_0}^2}{x}}+
\E_n(0,u_n^0,z_0)
- \integlin{0}{\bartau(t)}
{\pairing{}{H^1_\mathrm{D}(\Omega;\R^d)}{\dot{\bigF}_n}{\underu}}{s}\,; 
\end{aligned}
\end{equation}
 \item
discrete total energy inequality: for all  $t \in[0,T]$
\begin{equation}
\label{disc-tot-energy-ineq}
\begin{aligned}
 &\tfrac\rho2{\integ{\Om}{\mod{\dot u_n(t)}^2}{x}}
+\E_n(t,\baru(t),\barz(t))
+\integ{\Om}{\barw(t)}{x}
\\
&\le\tfrac\rho2{\integ{\Om}{\mod{\dot u_0}^2}{x}}+
\E_n(0,u_n^0,z_0) +\integ{\Om}{\w_0}{x}
\\
&\phantom{=}  - \integlin{0}{\bartau(t)}
{\pairing{}{H^1_\mathrm{D}(\Omega;\R^d)}{\dot{\bigF}_n}{\underu}}{s}   
+\integlin{0}{\bartau(t)}{ \! \left[
\integ{\partial\Om}{\barh}{\acca^{d-1}(x)} +\integ{\Om}{\barH}{x}
\right] }{s} \,;
\end{aligned}
\end{equation}
\item
discrete formulation of the heat equation:
 for all $t\in[0,T]$ and
for every $(n+1)$-tuple ${( \eta \kk)}_{k=0}^n \subset H^1(\Omega)$, setting
$\bareta(s):=\eta\kk$ and
$\eta_n(s):=\tfrac{s-t\km}{\taun} \eta\kk + \tfrac{t\kk-s}{\taun} \eta\km$
for $s\in (t\km,t\kk]$,
\begin{equation}
\label{disc-heat}
\begin{aligned}
&\integ{\Om}{\barw(t) \bareta(t)}{x} -
\integ{\Om}{\theta_0\eta_n(0)}{x} -
\integlin{0}{\bartau(t)}{\!\integ{\Om}{\underth(s)\dot
\eta_n(s)}{x}}{s}
\\
& \phantom{=}
+
\integlin{0}{\bartau(t)}{\!\integ{\Om}{\left( \KK(\barz,\barw)\nabla \barw \right)\ps\nabla\bareta}{x} }{s}\\
&=
\integlin{0}{\bartau(t)}{\!\integ{\Om}{\bareta\,|\dot z_n |}{x}}{s}
\integlin{0}{\bartau(t)}{\!\integ{\Om}{\left(\DD(\underz,\underth)e(\dot
u_n) - \barw\,\BB \right) \psm e(\dot u_n) \,\bareta}{x} }{s}
\\
& \phantom{=}
+  \integlin{0}{\bartau(t)}{ \!\left[ \integ{\partial\Om}{\barh\,\eta_n}{\acca^{d-1}(x)}
+ \pairing{}{H^1(\Om)}{\barH}{\eta_n} \right] }{s}\,.
\end{aligned}
\end{equation}
\end{compactitem}
\end{subequations}
\end{prop}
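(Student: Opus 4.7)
The plan is to derive each of the six properties in \eqref{discrete-solution} by reformulating the three components \eqref{DSw1}--\eqref{DSw3} of Problem \ref{prob-discrete} in terms of the interpolants defined in \eqref{interpolants}. Items (1) and (2) follow directly from \eqref{DSw1}. Unidirectionality is immediate: since $\mathcal{R}_1(z-z_n^{k-1})=+\infty$ whenever $z\not\le z_n^{k-1}$ on a set of positive measure, any minimizer $z_n^k$ must satisfy $z_n^k\le z_n^{k-1}$ a.e.\ in $\Omega$, and iterating over $k$ gives the pointwise monotonicity of the piecewise constant interpolant $\bar z_n$. For the discrete semistability \eqref{disc-semistab}, testing the minimality of $z_n^k$ against any $\tilde z\in\calZ$ gives $\E_n(t_n^k,u_n^{k-1},z_n^k)+\calR_1(z_n^k-z_n^{k-1})\le \E_n(t_n^k,u_n^{k-1},\tilde z)+\calR_1(\tilde z-z_n^{k-1})$; then the triangle-type inequality $\calR_1(\tilde z-z_n^{k-1})\le\calR_1(\tilde z-z_n^k)+\calR_1(z_n^k-z_n^{k-1})$ (which holds because $\mathrm{R}_1$ is $1$-homogeneous and convex) yields exactly \eqref{disc-semistab}.

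Items (3) and (6) are obtained in a parallel fashion by testing the discrete equations with suitable sequences and applying the discrete integration-by-parts formula \eqref{discrete-by-part}. For the momentum equation, I would take any tuple $(v_n^k)_{k=0}^n$, test \eqref{DSw2} with $v_n^k$, multiply by $\taun$, and sum for $k=1,\ldots,\kappa$ where $t\in(t_n^{\kappa-1},t_n^\kappa]$; applying \eqref{discrete-by-part} to the sum involving the second-order difference $(u_n^k-2u_n^{k-1}+u_n^{k-2})/\taun^2$ and using the definition $u_n^{-1}=u_n^0-\taun\dot u_0$ produces the boundary term $\rho\int_\Omega\dot u_0\cdot v_n(0)\,dx$ and the Riemann-type integral involving $\dot u_n(s{-}\taun)\cdot\dot v_n(s)$ appearing in \eqref{disc-momentum}. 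The heat equation \eqref{disc-heat} is derived analogously by testing \eqref{DSw3} with $\eta_n^k$, summing, and applying \eqref{discrete-by-part} to the sum $\sum_k\int(\theta_n^k-\theta_n^{k-1})\eta_n^k\,dx$; the term $\int_\Omega\bareta\,|\dot z_n|\,dx$ arises because $z_n$ is monotone so $|\dot z_n|=(z_n^{k-1}-z_n^k)/\taun$ on each subinterval.

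The main obstacle will be the discrete mechanical energy inequality \eqref{disc-mech-energy-ineq}, where the interplay of the (non)convex energy $\E_n$ and the minimality of $z_n^k$ must be handled with care. The plan is to test \eqref{DSw2} with $v=u_n^k-u_n^{k-1}$, sum over $k$, and bound each term using convexity: the kinetic and viscous terms are straightforward via the estimate $(a_k-a_{k-1})\cdot a_k\ge\tfrac12(|a_k|^2-|a_{k-1}|^2)$ (with $a_k=\dot u_n(t_n^k)$); the elastic term is bounded below by the chord inequality $\int\CC(z_n^k)e(u_n^k):e(u_n^k-u_n^{k-1})\ge \tfrac12\int\CC(z_n^k)\bigl[e(u_n^k):e(u_n^k)-e(u_n^{k-1}):e(u_n^{k-1})\bigr]$, and similarly for the $\gamma$-regularization $\tfrac{\taun}{\gamma}|e(u)|^\gamma$. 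This leaves an obstructive cross term of the form $\tfrac12\int[\CC(z_n^k)-\CC(z_n^{k-1})]e(u_n^{k-1}):e(u_n^{k-1})$, which is sign-indefinite a priori; here the monotonicity assumption \eqref{mono} combined with the unidirectionality $z_n^k\le z_n^{k-1}$ makes this quantity nonpositive, and more importantly, testing minimality \eqref{DSw1} against $\tilde z=z_n^{k-1}$ gives exactly $\calR_1(z_n^k-z_n^{k-1})+\calG(z_n^k,\nabla z_n^k)-\calG(z_n^{k-1},\nabla z_n^{k-1})\le-\tfrac12\int[\CC(z_n^k)-\CC(z_n^{k-1})]e(u_n^{k-1}):e(u_n^{k-1})$, which cancels this term against the gradient-damage contribution. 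Finally, since $\calR_1(z_n^k-z_n^{k-1})=\int(z_n^{k-1}-z_n^k)\,dx$, summing yields the term $\int(z_0-\bar z_n(t))\,dx$ in \eqref{disc-mech-energy-ineq}; the external-force contribution $-\int_0^{\bar\tau_n(t)}\langle\dot\bigF_n,\underu\rangle$ follows by one more application of the discrete by-parts formula to $\sum_k\langle\bigF_n^k,u_n^k-u_n^{k-1}\rangle$.

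The discrete total energy inequality \eqref{disc-tot-energy-ineq} then falls out as a consequence: I would add \eqref{disc-mech-energy-ineq} to the discrete heat equation \eqref{disc-heat} tested with $\eta\equiv 1$. The viscous dissipation terms $\int\DD(\underz,\underth)e(\dot u_n):e(\dot u_n)$ and the thermal coupling terms $-\int\bartheta\,\BB:e(\dot u_n)$ cancel exactly between the two, as does $\int(z_0-\bar z_n(t))\,dx$ against $\int_0^{\bar\tau_n(t)}\int|\dot z_n|\,dx$ (thanks again to unidirectionality); what remains is precisely \eqref{disc-tot-energy-ineq}. All passages to the interpolant form use only the definitions in \eqref{interpolants} and the identities $\taun\dot u_n\restr{(t_n^{k-1},t_n^k]}=u_n^k-u_n^{k-1}$ and analogous ones for $z_n,\theta_n$.
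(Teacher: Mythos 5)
Your proposal is correct and follows essentially the same route as the paper: unidirectionality and semistability are read off from the minimality \eqref{DSw1} (your $\calR_1$-triangle inequality is just a repackaging of the paper's direct computation), the discrete momentum and heat equations come from testing \eqref{DSw2}--\eqref{DSw3} and the by-parts formula \eqref{discrete-by-part}, the mechanical energy inequality from testing \eqref{DSw2} with $u\kk-u\km$, the convexity/chord estimates, and absorbing the elastic cross term via \eqref{DSw1} with competitor $z\km$ (the paper's inequality \eqref{times-zdot}), and the total energy inequality by adding the heat equation tested with a constant. The only cosmetic difference is organizational: you isolate the cross term $\tfrac12\int[\CC(z\kk)-\CC(z\km)]e(u\km){:}e(u\km)$ explicitly, whereas the paper keeps $\sum_k\tfrac12\int\CC(z\kk)e(u\km){:}e(u\km)$ and cancels it when summing \eqref{times-zdot}; the content is identical.
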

%
%
%
%
\begin{proof}
The discrete momentum and heat equations   \eqref{disc-momentum} and \eqref{disc-heat}
follow from testing \eqref{DSw2} and \eqref{DSw3}
by the discrete test functions ${(v\kk)}_{k=0}^n\subset W^{1,\gamma}_{\mathrm{D}}(\Omega;\R^d)$
and ${( \eta \kk)}_{k=0}^n \subset H^1(\Omega)$, respectively,
and applying the discrete by-part integration formula \eqref{discrete-by-part}. From
the discrete minimum problem \eqref{DSw1} we infer
\[
\E(t\kk,u\km,z\kk) \leq  \E(t\kk,u\km,\tilde z )
+\int_{\Omega}(z\km - \tilde z) \dd x
-  \int_{\Omega}(z\km - z\kk) \dd x
\leq  \E(t\kk,u\km,\tilde z )  +\int_{\Omega}(z\kk - \tilde z) \dd x
\]
for all  $ \tilde z \in \mathcal{Z}$  with  $ \tilde z \leq z\km$.
By \eqref{DSw1} and the definition of the dissipation $\calR_1$ we
have $z\kk\le z\km$, whence  the unidirectionality and
the discrete semistability \eqref{disc-semistab} hold.

To deduce the mechanical energy inequality
\eqref{disc-mech-energy-ineq} we choose  $z\km$ as  a competitor in
\eqref{DSw1} and get
\begin{equation}
\label{times-zdot}
\begin{aligned}
& \integ{\Om}{(z\km-z\kk)}{x} + \integ{\Om}{\left(\tfrac12  \CC(z\kk)e(u\km)\psm e(u\km)
+G(z\kk,\nabla z\kk)\right)}{x} \\
&\le
\integ{\Om}{\left( \tfrac12 \CC(z\km)e(u\km):e(u\km)+G(z\km,\nabla z\km)\right)}{x} \,.
\end{aligned}
\end{equation}
Moreover, we  test  \eqref{DSw2} by $v=u\kk-u\km$.  To this aim, we
observe that by convexity \eqref{cvx-ineq}
\begin{subequations}
\begin{align}
&
\rho \int_\Omega \tfrac{u\kk-2u\km+u\kmm}{\taun^2} \ps (u\kk{-}u\km) \dd x
\geq\ \rho\integ{\Om}{\!\!\left( \tfrac12\tfrac{\mod{u\kk-u\km}^2}{\taun^2}
- \tfrac12\tfrac{\mod{u\km-u\kmm}^2}{\taun^2}\right)}{x}\,,\label{inertia}
\\
&
\label{due-convexity-1}
\int_\Om \CC(z\kk) e(u\kk)\,{:}\,(e(u\kk){-}e(u\km))\dd x
\geq
 \int_\Om  \tfrac12\Big(
\CC(z\kk)e(u\kk) \,{:}\, e(u\kk) \,{-}\, \CC(z\kk)e(u\km)\,{:}\,e(u\km)
\Big) \dd x\,,
\\
\label{due-convexity-2}
&
\integ{\Om}{
\taun |e(u\kk)|^{\gamma-2} e(u\kk) \,{:}\, (e(u\kk){-}e(u\km)) }{x}
\geq \integ{\Om} {\Big(
\tfrac{\taun}{\gamma} |e(u\kk)|^{\gamma} - \tfrac{\taun}{\gamma} |e(u\km)|^{\gamma}
\Big) }{x} \,.
\end{align}
\end{subequations}
 Further,  let $t \in (0,T]$ be fixed, and let $1\leq  j\leq n $
fulfill $t\in (t_n^{j-1}, t_n^j]$. We  sum
\eqref{inertia}--\eqref{due-convexity-2} over the index $k=1,
\ldots, j$. Applying the by-part integration formula
\eqref{discrete-by-part}  we  conclude that
\begin{equation}
\label{discrete-by-part-F}
\begin{aligned}
&
\sum_{k=1}^{j} \pairing{}{H^1_\mathrm{D}(\Omega;\R^d)}{\bigF\kk}{u\kk-u\km} =
\integlin{0}{\bartau(t)} {\pairing{}{H^1_\mathrm{D}(\Omega;\R^d)}{\barbigF}{\dot{u}_n}}{s}
\\ &=
\pairing{}{H^1_\mathrm{D}(\Omega;\R^d)}{\barbigF(t)}{\baru(t)}
- \pairing{}{H^1_\mathrm{D}(\Omega;\R^d)}{\bigF(0)}{u_0} 
- \integlin{0}{\bartau(t)}
{\pairing{}{H^1_\mathrm{D}(\Omega;\R^d)}{\dot{\bigF}_n}{\underu}}{s}\,. 
\end{aligned}
\end{equation}
All in all we infer
\[
\begin{aligned}
&
\tfrac{\rho}2\integ{\Om}{|\dot{u}_n(t)|^2}{x}
+\int_0^{\bartau(t)}
\integ{\Om}{\left( \DD(\underz,\underth) \,e\left(\dot{u}_n\right){-}\underth\,\BB    \right)
\psm e(\dot{u}_n)} {x} \dd s
\\
& \phantom{=} + \integ{\Om} {\tfrac12 \CC(\barz(t))e(\baru(t)) \psm
e(\baru(t))  }{x} +\integ{\Om} {\tfrac{\taun}{\gamma}
|e(\baru(t))|^{\gamma} }{x}
-\pairing{}{H^1_\mathrm{D}(\Om;\R^d)}{\barbigF(t)}{\baru(t)}
\\
&
\leq
\tfrac{\rho}2\integ{\Om}{|\dot{u}_0|^2}{x}
+\integ{\Om} {\tfrac{\tau_{n}}{\gamma} |e(u_0)|^{\gamma} }{x}
 - \pairing{}{H^1_\mathrm{D}(\Omega;\R^d)}{\bigF(0)}{u_0}
 - \integlin{0}{\bartau(t)}
{\pairing{}{H^1_\mathrm{D}(\Omega;\R^d)}{\dot{\bigF}_n}{\underu}}{s} 
 \\
 & \phantom{=}
+
\sum_{k=1}^j \integ{\Om} {\tfrac12 \CC(z\kk)e(u\km)  \psm e(u\km)  }{x}\,.
\end{aligned}
\]
We add the above inequality to \eqref{times-zdot}, summed over $k=1,
\ldots, j$. Observing the cancelation of the term $\sum_{k=1}^j
\integ{\Om} {\tfrac12 \CC(z\kk)e(u\km)\psm e(u\km)  }{x}$, we
conclude \eqref{disc-mech-energy-ineq}.

Finally, the discrete total energy inequality ensues from adding the
discrete mechanical energy inequality \eqref{disc-mech-energy-ineq}
with the discrete heat equation \eqref{DSw3}, tested for
$\testw=\tau_{n}$ and added up over $k=1, \ldots, j$. We observe the
cancelation of some terms, and readily conclude
\eqref{disc-tot-energy-ineq}.
\end{proof}
%
\subsection{A priori estimates}
%
The following result collects a series of a priori estimates on
the approximate solutions, uniform with respect to $n\in \N$.
Let us mention in advance that, in its proof
we will
start from the discrete total energy inequality
\eqref{disc-tot-energy-ineq} and derive estimates
\eqref{apDu}, \eqref{apDu-gamma}, \eqref{ap-u-lin-linfty},
\eqref{ap-z-w1q}, for $\baru, \,
\dot{u}_n, \,   \barz$, as well as estimate \eqref{aptheta1} below
for $\|\barth\|_{L^\infty(0,T;L^1(\Omega))}$. The next crucial step
will be to obtain a bound for the $L^2(0,T;H^1(\Omega))$-norm of
$\barth$. For this, we will make use of a technique developed in
\cite{FePeRo09ESPT}, cf.\ also \cite{RoRoESTC14}. Namely, we will test
the discrete heat equation \eqref{DSw3} by $(\theta\kk)^{\alpha-1}$,
with $\alpha \in (0,1)$. Exploiting the concavity of the function
$F(\theta) =\theta^{\alpha}/\alpha$, we will deduce that
$$
\int_0^{T}\!\!\!\!  \int_\Omega \KK \big(\barz,\bartheta\big)\nabla \big(\bartheta^{\,\alpha/2}\big)
\ps \nabla \big(\bartheta^{\,\alpha/2} \big)  \dd x \dd t
+\int_\Omega\tfrac{\theta_0^\alpha}\alpha \dd x
\leq \int_\Omega {\tfrac{\bartheta^\alpha( T)}{\alpha} }\dd x +
C  \int_0^{T}\!\!\!\!
\int_\Omega \bartheta^{\,\alpha+1} (t) \dd x \dd t\,,
$$
where the positive  and quadratic terms on the right-hand side of
\eqref{DSw3} have been confined to the \emph{left-hand side} and
thus can be neglected. Hence, relying on the growth \eqref{ass-K-b}
of $\KK$, we  will  end up with an estimate for $\barth^{\,\alpha/2}$ in
$L^2(0,T;H^1(\Omega))$, from which we  will    ultimately infer the desired
bound \eqref{apthetaH12}, whence \eqref{apthetaLp} by interpolation.
We  will be then in a position to exploit the mechanical energy
inequality in order to recover the \emph{dissipative} estimate
\eqref{ap-u-lin-inter}.
Estimate \eqref{apthetaBV}   will  finally ensue from a comparison in
\eqref{DSw3}.

In the following proof we will also use the concave counterpart to
inequality \eqref{cvx-ineq}, namely that
for any \emph{concave} (differentiable) function $\psi: \R \to (-\infty,+\infty]$
\begin{equation}
\label{concave-ineq}
\psi(x) -\psi(y) \leq \psi'(y) (x{-}y) \quad \text{for all } x,y\in \dom(\psi)\,.
\end{equation}

\begin{prop}[A priori estimates]
\label{Apriori}
Let the assumptions of Theorem~\ref{thm:main} hold true and
consider a sequence
${(\baru,\underu,u_n,\barz,\underz,\barth,\underth,\theta_n)}_n$
complying with Proposition \ref{EFdiscr}. Then there exists a
constant $C>0$ such that the following estimates hold uniformly with
respect to $n\in \N$:
\begin{subequations}
\label{apriori}
\begin{align}
\label{apDu}
\|\baru\|_{L^\infty(0,T;H_{\mathrm{D}}^1 (\Omega;\R^d))}&\leq C\,,\\ \displaybreak[0]
\label{apDu-gamma}
\taun^{1/\gamma}\|\baru\|_{L^\infty(0,T;W^{1,\gamma}_\mathrm{D} (\Omega;\R^d))}&\leq C\,,\\ \displaybreak[0]
\label{ap-u-lin-inter}
\|u_n\|_{H^1(0,T;H_{\mathrm{D}}^1 (\Omega;\R^d))}&\leq C\,,\\ \displaybreak[0]
\label{ap-u-lin-linfty}
\|\dot{u}_n \|_{L^\infty (0,T; L^2(\Omega;\R^d))} &\leq C\,,\\ \displaybreak[0]
\label{apddu}
 \|\dot{u}_n \|_{\BV([0,T];W^{1,\gamma}_\mathrm{D}(\Omega;\R^d)^*)}&\leq  C\,,\\ \displaybreak[0]
\label{apR1}
\mathcal{R}_1(\barz(T)-z_0)&\leq C\,, \\ \displaybreak[0]
\label{apzLinfty}
 \| \barz \|_{L^\infty ((0,T) \times \Omega)  } & \leq 1
 \,,   \\ \displaybreak[0]
\label{ap-z-w1q}
 \| \barz \|_{L^\infty (0,T; W^{1,q}(\Omega)) } &  \leq C 
\,,\\ \displaybreak[0]
\label{aptheta1}
\big\|\barth\big\|_{L^\infty(0,T;L^1(\Omega))}&\leq  C\,, \\ \displaybreak[0]
\label{apthetaH12}
\big\|\barth \big\|_{L^2(0,T;H^1(\Omega))}&\leq  C\,, \\ \displaybreak[0]
\label{apthetaLp}
\big\|\barth \big\|_{L^p((0,T)\times\Omega)}&\leq C
\quad\text{for any }p\in\left\{
\begin{array}{ll}
[1,8/3]&\text{if }d{=}3\,,\\
{[1,3]}&\text{if }d{=}2\,,
\end{array}
\right.
\\ \displaybreak[0]
\label{apthetaBV} \big\|\barth
\big\|_{\BV([0,T]; W^{1,\infty}(\Omega){^*})}&\leq C 
\,,
\end{align}
\end{subequations}
where $\mathcal{R}_1$ is from \eqref{integrated-1-dissip}.
\end{prop}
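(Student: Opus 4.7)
The plan is to proceed in four stages, extracting the easy ``energy-type'' bounds first, then the subtle estimates for the temperature via a Ne\v cas-type test, then the dissipative estimate for $\dot u_n$, and finally the $\BV$ bounds by comparison. Throughout, the strict positivity \eqref{tetak-pos} of $\theta\kk$ plays an essential role: it ensures that $\int_\Omega\barth(t)\dd x$ controls $\|\barth(t)\|_{L^1(\Omega)}$, since we cannot exploit sign information otherwise.

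\textbf{Step 1: bounds from the total energy inequality.} I start from the discrete total energy inequality \eqref{disc-tot-energy-ineq}. Using the coercivity \eqref{conti-c} of $\CC$, the growth condition \eqref{G-growth} on $G$, the nonnegativity of $\barth$ granted by \eqref{tetak-pos}, and the extra regularizing term $\tfrac{\taun}{\gamma}|e(\baru)|^\gamma$ built into $\mathcal{E}_n$, I control the left-hand side by sums of the squared $H^1_{\mathrm{D}}$-, $W^{1,q}$-, $L^\gamma$-norms of $\baru(t),\,\barz(t),\,\taun^{1/\gamma}\baru(t)$, plus the kinetic energy $\|\dot u_n(t)\|_{L^2}^2$, plus $\|\barth(t)\|_{L^1}$. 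On the right, the work $\int_0^{\bartau(t)}\langle\dot{\bigF}_n,\underu\rangle\dd s$ is handled by Young's inequality and \eqref{ass-forces-n-dot}, while the source terms are bounded via \eqref{ass-dataHeat-n} and a Gronwall argument (applied to $\baru$ via $\underu$). This yields at once \eqref{apDu}, \eqref{apDu-gamma}, \eqref{ap-u-lin-linfty}, and \eqref{aptheta1}, together with \eqref{ap-z-w1q}; the a.e.\ bound \eqref{apzLinfty} follows from the unidirectionality and the indicator property \eqref{Gind}, while \eqref{apR1} is immediate from $z_n^k\in[0,1]$.

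\textbf{Step 2: enhanced temperature bounds via Ne\v cas-type test.} This is the hard part. I test the discrete heat equation \eqref{DSw3} by $(\theta\kk)^{\alpha-1}$ with $\alpha\in(0,1)$, which is admissible thanks to \eqref{tetak-pos}. Using the concavity inequality \eqref{concave-ineq} applied to $F(\theta)=\theta^{\alpha}/\alpha$, the discrete time-derivative term is estimated from below by $\tfrac{1}{\alpha\taun}\int_\Omega((\theta\kk)^\alpha-(\theta\km)^\alpha)\dd x$, while the coercivity \eqref{ass-K-b} rewrites the diffusion term as
\begin{equation*}
\int_\Omega\KK(\barz,\barth)\nabla\barth\cdot\nabla(\barth^{\alpha-1})\dd x\;\geq\;c\int_\Omega|\nabla(\barth^{\alpha/2})|^2\dd x-C\int_\Omega\barth^{\,\alpha+1}\dd x.
\end{equation*}
Since $\alpha<1$, the quadratic viscous and thermal expansion terms on the right-hand side of \eqref{DSw3}, multiplied by $(\theta\kk)^{\alpha-1}$, are absorbed using \eqref{assCD-3}, \eqref{ass-B}, and the bounds from Step 1. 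Summing over $k$ and using the $L^\infty(L^1)$ control \eqref{aptheta1} of $\barth$, I obtain $\barth^{\,\alpha/2}\in L^2(0,T;H^1(\Omega))$ uniformly; interpolating with $\barth\in L^\infty(0,T;L^1(\Omega))$ via the embedding $H^1\hookrightarrow L^6$ (resp.~$L^p$ for every $p<\infty$ in $d{=}2$) yields \eqref{apthetaLp} in the stated range. Finally, testing \eqref{DSw3} by $\barth$ itself and using \eqref{ass-K-b} together with \eqref{apthetaLp} to control the quadratic right-hand side produces the $L^2(H^1)$ bound \eqref{apthetaH12}.

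\textbf{Step 3: dissipative estimate and $\BV$ bounds.} With \eqref{apthetaLp} at hand, I return to the discrete mechanical energy inequality \eqref{disc-mech-energy-ineq}. The thermal expansion term $\int_0^{\bartau(t)}\!\int_\Omega\barth\,\BB:e(\dot u_n)\dd x\dd s$ is controlled by H\"older and Young against $\|\barth\|_{L^2((0,T)\times\Omega)}$ and $\|e(\dot u_n)\|_{L^2((0,T)\times\Omega)}$; the latter is partly absorbed into the viscous dissipation on the left via \eqref{assCD-3}. This yields the dissipative bound \eqref{ap-u-lin-inter}. For \eqref{apddu}, I use the discrete momentum equation \eqref{DSw2} as an identity in $W^{1,\gamma}_{\mathrm{D}}(\Omega;\R^d)^*$: writing $\dot u_n(t\kk)-\dot u_n(t\km)=\taun\cdot\rho^{-1}[\cdots]$ and summing, the elastic, viscous, thermal, $\gamma$-regularizing, and forcing contributions are all bounded in $L^1(0,T;W^{1,\gamma}_{\mathrm{D}}(\Omega;\R^d)^*)$ thanks to Steps~1--2 (the $\taun|e(\baru)|^{\gamma-2}e(\baru)$ term uses \eqref{apDu-gamma}); this delivers the $\BV$ bound. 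The estimate \eqref{apthetaBV} is obtained by a completely analogous comparison in \eqref{DSw3}: each term on the right-hand side is bounded in $L^1(0,T;W^{1,\infty}(\Omega)^*)$ using the estimates above and the embedding $W^{1,\infty}\hookrightarrow L^\infty$.

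\textbf{Main obstacle.} The crux is Step~2: the test by $(\theta\kk)^{\alpha-1}$ is delicate because it must simultaneously absorb the quadratic viscous dissipation and thermal expansion contributions on the right-hand side of \eqref{DSw3}, while producing a gradient estimate compatible with the subquadratic growth of $\KK$. The careful choice of $\alpha\in(0,1)$, together with the interpolation to upgrade $L^1(L^1)$-temperature bounds to $L^p$-bounds with $p$ up to $\kappa_d$-dependent thresholds, is the most technical part, and is precisely where the restriction $\kappa<\kappa_d$ in \eqref{ass-K-b} enters.
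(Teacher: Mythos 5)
Your overall architecture (total energy inequality first, then a negative-power test of the discrete heat equation, then the dissipative estimate for $\dot u_n$, then $\BV$ bounds by comparison) matches the paper's, but Step 2 has a genuine gap. When you test \eqref{DSw3} by $(\theta\kk)^{\alpha-1}$, the coercivity \eqref{ass-K-b} does not merely give $c\int|\nabla(\barth^{\alpha/2})|^2\dd x$: the weight $|\theta|^{\kappa}$ upgrades this to $c\int|\nabla(\barth^{(\kappa+\alpha)/2})|^2\dd x$, and this boost is the whole point. With only $\barth^{\alpha/2}$ bounded in $L^2(0,T;H^1(\Omega))$, $\alpha<1$, interpolation with $L^\infty(0,T;L^1(\Omega))$ gives $\barth\in L^{\alpha+2/3}((0,T)\times\Omega)$ at best, nowhere near the exponent $8/3$ (resp.\ $3$) claimed in \eqref{apthetaLp}; that range requires $\barth$ itself in $L^2(0,T;H^1(\Omega))$. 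Your fallback — testing \eqref{DSw3} by $\barth$ to obtain \eqref{apthetaH12} — cannot work uniformly in $n$: the right-hand side contains $\DD(\underz,\underth)e(\dot u_n){:}e(\dot u_n)\,\barth$, and at this stage no uniform bound on $e(\dot u_n)$ in $L^2((0,T)\times\Omega)$ is available (it is obtained only afterwards, from \eqref{disc-mech-energy-ineq}, \emph{using} \eqref{apthetaH12}); even a posteriori such a term is not controlled by the norms you are trying to establish. The negative exponent $\alpha-1<0$ is precisely what makes the dissipation terms appear with a favorable sign so they can be discarded, and the paper closes the argument by choosing $\alpha=2-\kappa\in(0,1)$ (admissible exactly because $1<\kappa<\kappa_d\le2$), so that $(\kappa+\alpha)/2=1$ and the test yields $\nabla\barth\in L^2((0,T)\times\Omega)$ directly, after absorbing the resulting $\int\barth^{\alpha+1}$ term via Gagliardo--Nirenberg and Young against \eqref{aptheta1}. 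Then \eqref{apthetaH12} comes first and \eqref{apthetaLp} follows by interpolation, not the other way around.

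A second, related shortfall is in your treatment of \eqref{apthetaBV}. The flux term $\KK(\barz,\barth)\nabla\barth$ grows like $\barth^{\kappa}\nabla\barth$, and for $\kappa\in(4/3,\kappa_d)$ in $d=3$ this is \emph{not} in $L^1((0,T)\times\Omega)$ just from \eqref{apthetaH12} and \eqref{apthetaLp} (one would need $\barth\in L^{2\kappa}$). The paper splits it as $\barth^{(\kappa-\alpha+2)/2}\cdot\barth^{(\kappa+\alpha-2)/2}\nabla\barth$ and uses the additional family of bounds $\|\nabla(\barth^{(\kappa+\alpha)/2})\|_{L^2((0,T)\times\Omega)}\le C$ for $\alpha$ arbitrarily close to $1$ (re-derived after \eqref{apthetaLp}), together with \eqref{apthetaLp} for $p=\kappa-\alpha+2$. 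Your sketch never establishes this refined information, so "each term is bounded in $L^1(0,T;W^{1,\infty}(\Omega)^*)$ using the estimates above" does not go through for the full range $\kappa<\kappa_d$.
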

 Observe that estimate \eqref{ap-u-lin-inter}  implies \eqref{apDu}, and that \eqref{apthetaLp} is a consequence  of \eqref{aptheta1} and 
\eqref{apthetaH12}. Nonetheless, we have chosen to highlight  \eqref{apDu} and   \eqref{apthetaLp}  for ease of exposition, both in the proof of
Prop.\ \ref{Apriori} and  for the compactness arguments of Prop.\ \ref{Conv}. 
\begin{proof}
Estimate \eqref{apR1} follows from \eqref{Gind}, \eqref{hyp-init},
the definition of $\mathcal{R}_1$, and the monotonicity of $\barz$
and $\underz$. We divide the proof of the other estimates in
subsequent steps.
\par
{\bf First a priori estimates, ad \eqref{apDu}, \eqref{apDu-gamma}, \eqref{ap-u-lin-linfty}, \eqref{apzLinfty},
\eqref{ap-z-w1q}, \eqref{aptheta1}: }
We start from the discrete total energy inequality \eqref{disc-tot-energy-ineq}.
For its left-hand side, we
observe that
the first and the third term are nonnegative. For the second one, we use that,
in view of \eqref{conti-c}, \eqref{G-growth}, and
\eqref{ass-forces}, we have
\begin{equation}
\label{est-mechen}
\begin{aligned}
{\E_n(t,\baru(t),\barz(t)) }
 & \geq C_{\CC}^1 \int_\Omega |e(\baru(t))|^2 \dd x
+ C^1_G \int_\Omega |\nabla \barz(t)|^q \dd x
+\tfrac\taun\gamma \int_\Omega |e(\baru(t))|^\gamma \dd x
\\ & \quad
 - \big\|
\barbigF\big\|_{L^\infty (0,T;H^1_\mathrm{D}(\Omega;\R^d)^*)}
\| \baru(t)\|_{H^1_\mathrm{D}(\Omega;\R^d)} -C
\\
& \geq C \left( \| \baru(t) \|_{H^1_\mathrm{D}(\Omega;\R^d)}^2 + \taun \| \baru(t)
\|_{W^{1,\gamma}_\mathrm{D}(\Omega;\R^d)}^\gamma
+ \|    \barz(t) \|_{W^{1,q}(\Omega)}^q
\right) - C\,,
\end{aligned}
\end{equation}
for almost all $t \in (0,T)$,
where we have also used Poincar\'e's and  Korn's inequalities. Concerning the right-hand
side of \eqref{disc-tot-energy-ineq},   we use that
$| \partial_t \E_n (t,\underu(t),\underz(t))  |
\leq \| \dot{\bigF}_n \|_{H^1_\mathrm{D}(\Omega;\R^d)^*}
\| \underu(t)\|_{H^1_\mathrm{D}(\Omega;\R^d)}$
for almost all $t \in (0,T)$.
The remaining terms on the
right-hand side  are bounded, uniformly with respect to $n\in \N$,
in view of the properties of the initial and given data \eqref{assu-init}
and \eqref{tilde-uzero}, and of \eqref{ass-dataHeat-n}.
All in all,  from  \eqref{disc-tot-energy-ineq} we deduce
\[
C  \| \baru(t) \|_{H^1_\mathrm{D}(\Omega;\R^d)}^2
\leq C+ \tfrac12 \int_0^{\bartau(t)}  \| \underu(s)\|_{H^1_\mathrm{D}(\Omega;\R^d)}^2 \dd s
+  \tfrac12 \int_0^{\bartau(t)} \big\| \dot{\bigF}_n \big\|_{H^1_\mathrm{D}(\Omega;\R^d)^*}^2 \dd s\,.
\]
Also in view of the bounds on $\dot f_n$ by \eqref{ass-forces-n-dot},
estimate \eqref{apDu} then follows from the Gronwall Lemma. As a by-product, we conclude that
\begin{equation}
\label{by-product}
\int_0^{\bartau(t)}| \partial_t \E_n (s,\underu(s),\underz(s))  | \dd s
\leq C \int_0^{\bartau(t)} \big\| \dot{\bigF}_n (s) \big\|_{H^1_\mathrm{D}(\Omega;\R^d)^*} \dd s
\leq C\,.
\end{equation}
Inserting this into \eqref{disc-tot-energy-ineq} we also  infer
estimates \eqref{ap-u-lin-linfty},  \eqref{aptheta1}, and that
$|\E_n (t,\baru(t),\barz(t))| \leq C$ for a constant independent of
$n\in \N$ and $t\in (0,T)$. This implies \eqref{apDu-gamma} and the
first estimate in \eqref{ap-z-w1q}  via \eqref{est-mechen}. Then the
second estimate in \eqref{ap-z-w1q} immediately follows from the
very definition of the interpolants \eqref{interpolants}. Moreover,
\eqref{apzLinfty} is a direct consequence of the boundedness of the energy,
which implies $\barz ,\underz \in[0,1]$ a.e.\ in $\Omega$, for a.e.\
$t\in(0,T)$.
\par
{\bf Second a priori estimate: }
We fix $\alpha\in(0,1)$. Exploiting that
$\theta_n^k\geq\widetilde{\theta}>0$,
we may test \eqref{DSw3} by $(\theta_n^k)^{\alpha-1}$, thus obtaining
\begin{equation}
\label{tricky}
\begin{aligned}
&
\tfrac{4(1-\alpha)}{\alpha^2} \int_\Omega  \KK(z_n^k,\theta_n^k) \nabla (\theta\kk)^{\alpha/2}
\ps \nabla (\theta\kk)^{\alpha/2}  \dd x +
\int_\Omega \DD(z_n^k)e\big(\tfrac{u_n^k-u_n^{k-1}}{\tau}\big)
\psm e\big(\tfrac{u_n^k-u_n^{k-1}}{\tau}\big) (\theta_n^k)^{\alpha-1}
\dd x
\\ & \phantom{=}
+\int_\Omega \tfrac{z_n^{k-1}-z\kk}{\tau} (\theta_n^k)^{\alpha-1} \,\mathrm{d}x
+ \pairing{}{H^1(\Om)}{\hv\kk}{(\theta_n^k)^{\alpha-1}}
+\int_{\partial\Omega}\hs\kk(\theta_n^k)^{\alpha-1} \,\mathrm{d}\mathcal{H}^{d-1}
\\
&
=
\int_\Omega\tfrac{\theta_n^k-\theta_n^{k-1}}{\tau}(\theta_n^k)^{\alpha-1}\,\mathrm{d}x
+ \int_\Omega \theta_n^{k} \,\BB  \psm
e\big(\tfrac{u_n^k-u_n^{k-1}}{\tau}\big) (\theta_n^k)^{\alpha-1}\,\mathrm{d}x
\doteq I_1 +I_2\,,
\end{aligned}
\end{equation}
where we used that
\[
\KK(z_n^k,\theta_n^k) \nabla\theta_n^k \ps
\nabla(\theta_n^k)^{\alpha-1} =
(\alpha-1)(\theta_n^k)^{\alpha-2}\KK(z_n^k,\theta_n^k) \nabla\theta_n^k \ps
\nabla\theta_n^k
=
\tfrac{4(\alpha-1)}{\alpha^2}
\KK(z_n^k,\theta_n^k) \nabla (\theta_n^k)^{\alpha/2}\ps\nabla (\theta_n^k)^{\alpha/2}
\]
 and moved the term $\int_\Omega \KK(z_n^k,\theta_n^k)
\nabla\theta_n^k    \nabla(\theta_n^k)^{\alpha-1} \dd x  $ to the
opposite side. It follows from \eqref{concave-ineq} with $\psi(x) :
= \tfrac{x^\alpha}{\alpha}$ that
\[
I_1 \leq  \int_\Omega  \psi(\theta \kk ) \dd x -  \int_\Omega  \psi(\theta \km) \dd x\,,
\]
whereas we estimate $I_2$ by
\[
I_2 \leq \tfrac{C_{\DD}^1}2
\int_\Omega \left| e\big(\tfrac{u_n^k-u_n^{k-1}}{\tau}\big) \right|^2(\theta_n^k)^{\alpha-1}  \dd x
+C \int_\Omega |\theta_n^{k}|^2 (\theta_n^k)^{\alpha-1}  \dd x  \doteq I_3+I_4\,, 
\]
where $C_{\DD}^1$ from \eqref{assCD-3} is such that
$\displaystyle\int_\Omega \DD(z_n^k) e\big(\tfrac{u_n^k-u_n^{k-1}}{\tau}\big)
\psm e\big(\tfrac{u_n^k-u_n^{k-1}}{\tau}\big)
(\theta_n^k)^{\alpha-1} \dd x$ on the left-hand side of
\eqref{tricky} is bounded from below by $\displaystyle C_{\DD}^1 \int_\Omega
\left| e\big(\tfrac{u_n^k-u_n^{k-1}}{\tau}\big) \right|^2
(\theta_n^k)^{\alpha-1}  \dd x $,  which in turn dominates
$I_3$.  Taking into account that the second,  the third and the
fourth integrals on the left-hand side of \eqref{tricky} are
nonnegative also thanks to \eqref{ass-dataHeat} and summing up over
the index $k$, we end up with
\begin{equation}
\label{tricky-bis}
\tfrac{4(1-\alpha)}{\alpha^2}\!
 \int_0^{\bartau(t)}\!\!\!\!  \int_\Omega\!\! \KK \big(\barz,\bartheta\big)\nabla \big(\bartheta^{\alpha/2}\big)
\ps \nabla \big(\bartheta^{\alpha/2}\big)   \dd x \dd s
+\int_\Omega\!\!\tfrac{\theta_0^\alpha}\alpha \dd x
\leq \int_\Omega\!\! \tfrac{\bartheta(t)^{\alpha}}\alpha \dd x +  C\!
\int_0^{\bartau(t)}\!\!\!\!
\int_\Omega\!\! \bartheta(t)^{\alpha+1}  \dd x \dd s\,.
\end{equation}
Since $\alpha \in (0,1)$  and $\theta^k_n \geq \widetilde \theta>0$,  we have 
\[
 \int_\Omega \tfrac{\bartheta(t)^\alpha}\alpha \dd x
\leq \tfrac1{\alpha} \int_\Omega  \bartheta(t) \dd x + C
\leq C\,,
\]
  where the latter estimate follows  by \eqref{aptheta1}.
 From  \eqref{ass-K-b} we deduce that
 \begin{equation}
\label{additional-info}
\begin{aligned}
& \int_0^{\bartau(t)}  \!\!\! \int_\Omega \KK \big(\barz,\bartheta\big)
\nabla  \big(\bartheta^{\alpha/2}\big) \ps  \nabla \big(\bartheta^{\alpha/2}\big)
\dd x \dd s \geq c_1 \int_0^{\bartau(t)} \!\!\! \int_\Omega
\big(\bartheta\big)^\kappa  |\nabla \big(\bartheta^{\alpha/2}\big)|^2 \dd x \dd s
\\
& = { C} \int_0^{\bartau(t)} \!\!\!  \int_\Omega \big|\big(\bartheta\big)^{\kappa+\alpha
- 2}\big| \big|\nabla  \bartheta\big|^2   \dd x \dd s = { C}
\int_0^{\bartau(t)} \!\!\! \int_\Omega  |\nabla
\big(\bartheta^{(\kappa+\alpha)/2}\big)|^2   \dd x \dd s \,.
\end{aligned}
\end{equation}
In order to clarify the estimate for the second term on the right-hand
side of \eqref{tricky-bis},
we now use the placeholder 
\begin{equation*}
 w_n:= (\bartheta)^{(\kappa+\alpha)/2}\,,
\end{equation*}
 so that
$(\bartheta)^{\alpha+1}  = (w_n)^{2(\alpha+1)/(\alpha+\kappa)}$.  
Hence, neglecting the (positive) second term on the left-hand side of
\eqref{tricky-bis}, we infer
\begin{equation}
\label{calc2.2}
 \int_0^{\bartau(t)} \!\!\! \int_\Omega  |\nabla w_n|^2   \dd x \dd s  \leq C  + C  \int_0^{\bartau(t)} \!\!\!
\int_\Omega | w_n|^\om \dd x \dd s \qquad \text{with }  \om = 2\tfrac{\alpha+1}{\alpha+\kappa}  \,.
\end{equation}
We now proceed exactly in the same way as in  \cite{FePeRo09ESPT}, cf.\ also
\cite{RoRoESTC14}. Namely, the Gagliardo-Nirenberg inequality  for $d{=}3$
(for $d{=}2$ even better estimates hold true) yields
\begin{equation*}
\| w_n \|_{L^\om(\Omega)} \leq C  \| \nabla w_n\|_{L^2(\Omega;\R^d)}^\sigma
\| w_n \|_{L^r(\Omega)}^{1-\sigma} + C'   \| w_n \|_{L^r(\Omega)}
\end{equation*}
for suitable constants $C$
and $C'$, and  for
 $ 1 \leq  r  \leq  \om $ and $\sigma $ satisfying $1/\om= \sigma/6
+ (1-\sigma)/r$. Hence $\sigma= 6 (\om-r)/\om (6-r)$. Observe that
$\sigma \in (0,1)$  since $\om = 2(\alpha+1)/(\alpha+\kappa)<6$, 
which is satisfied because $\kappa>1$.  
Hence we transfer the Gagliardo-Nirenberg estimate into \eqref{calc2.2} and use
Young's inequality in the estimate of the term
 \[
 C \int_0^{\bartau(t)} \| \nabla w_n\|_{L^2(\Omega;\R^d)}^{\om\sigma}
\| w_n \|_{L^r(\Omega)}^{\om(1-\sigma)} \dd s
\leq  \tfrac{1}2  \int_0^{\bartau(t)} \| \nabla w_n\|_{L^2(\Omega;\R^d)}^2 \dd s
+  C'  \int_0^{\bartau(t)}
\| w_n  \|_{L^r(\Omega)}^{2\om(1-\sigma)/(2-\om\sigma)}\dd s\,.
\]
 In the previous inequality we have used the fact that $\omega\sigma<2$, which holds since $\omega<2$ and $\sigma<1$ by \eqref{calc2.2}.
The term $\tfrac{1}2 \int_0^{\bartau(t)} \| \nabla w_n\|_{L^2(\Omega;\R^d)}^2 \dd s$
may be absorbed into the 
left-hand side of \eqref{calc2.2}.
All in all, we conclude
\begin{equation}
\label{calc2.3}
 \int_0^{\bartau(t)} \!\!\! \int_\Omega  |\nabla w_n|^2   \dd x \dd s  \leq
 C   + C  \int_0^{\bartau(t)}  \| w_n  \|_{L^r(\Omega)}^{2\om
(1-\sigma)/(2-\om\sigma)}\dd s +  C'  \int_0^{\bartau(t)}
\| w_n\|_{L^r(\Omega)}^\om  \dd s \,.
\end{equation}
Now,  let us choose 
 \begin{equation*}
 1\leq r \leq  2/(\alpha{+}\kappa).
 \end{equation*}
 Then, we have
for almost all $t\in (0,T)$ that
\begin{equation}
\label{calc2.3-2}
\| w_n(t) \|_{L^r(\Omega)} = \left(\int_\Omega     \big(\bartheta(t)\big)^{r(\kappa+\alpha)/2}
\dd x\right)^{1/r}  =  \left(\int_\Omega \bartheta(t) \dd x\right)^{1/r} \leq  C\,,
\end{equation}
for a constant independent of $t$, where again we have used estimate  \eqref{aptheta1}. 
 Observe that, 
  since we have previously imposed $\kappa +\alpha-2 \geq 0$, we ultimately find that \eqref{calc2.3-2}  must hold for $r=1$ and that, moreover,
 $\alpha=2-\kappa\in(2-\kappa_d,1)$, 
 with $\kappa_d=5/3$ if $d{=}3$ and
$\kappa_d=2$ if $d{=}2$, so that
$ w_n = \bartheta$. 
 From \eqref{calc2.3}--\eqref{calc2.3-2}  we then infer 
\begin{equation}\label{171207}
\int_0^{\bartau(t)} \int_\Omega \big|\nabla \bartheta\big|^2 \dd x \dd s
\leq C\,.
\end{equation}
\par
{\bf Third a priori estimate, ad \eqref{apthetaH12} and  \eqref{apthetaLp}: }
 From \eqref{171207} we deduce  \eqref{apthetaH12}     in view of the previously obtained
\eqref{aptheta1} via Poincar\'e's inequality. Estimate \eqref{apthetaLp}  ensues
by interpolation between $L^2(0,T;H^1(\Omega))$
and $L^\infty(0,T;L^1(\Omega))$, relying on \eqref{apthetaH12} and \eqref{aptheta1}
 and exploiting the Gagliardo-Nirenberg inequality.
For later convenience, let us
also point out that,  we indeed recover the following bound
\begin{equation}
\label{est-alpha-later}\big\| (\bartheta)^{(\kappa+\alpha)/2}
\big\|_{L^2(0,T;H^1(\Omega))} \leq C 
\end{equation}
for arbitrary $\alpha \in (0,1)$.
For this, it is sufficient to observe that second term on the right-hand side of 
\eqref{tricky-bis} now fulfills 
$
\int_0^{\bartau(t)}
\int_\Omega \bartheta(t)^{\alpha+1}  \dd x \dd s \leq C$ thanks to estimate \eqref{apthetaLp}. 
Then,  by 
\eqref{additional-info} we find that 
$\int_0^{\bartau(t)}  \int_\Omega  |\nabla
\big(\bartheta^{(\kappa+\alpha)/2}\big)|^2   \dd x \dd s \leq C$, whence \eqref{est-alpha-later} via Poincar\'e's inequality. 
\par
{\bf Fourth a priori estimate, ad \eqref{ap-u-lin-inter}  and \eqref{apddu}: } 
From the discrete mechanical energy inequality \eqref{disc-mech-energy-ineq} we infer
\begin{equation}
\label{calc-4.1}
\begin{aligned}
C_{\DD}^1\integlin{0}{\bartau(t)}{ \!\!\! \integ{\Om}{|e(\dot u_n)|^2}{x}}{s}
\leq C + \integlin{0}{\bartau(t)}{ \!\!\! \integ{\Om} {\barw \,\BB  \psm e(\dot u_n)}{x} }{s}
\end{aligned}
\end{equation}
where we have used \eqref{est-mechen}, \eqref{by-product}, and the fact that the terms
$\int_\Omega |\dot{u}_0|^2 \dd x$ and $\E(0, u_n^0, z_0)$ are bounded, uniformly with respect to $n\in \N$,
in view of \eqref{hyp-init}
and \eqref{tilde-uzero}.
Exploiting the previously obtained estimate~\eqref{apthetaH12} we find
\[
\begin{aligned}
\integlin{0}{\bartau(t)}{ \!\!\! \integ{\Om} {\barw \,\BB  \psm
e(\dot u_n)}{x} }{t}  & \leq
\tfrac{C_{\DD}^1}2\integlin{0}{\bartau(t)}  {\integ{\Om}{|e(\dot
u_n)|^2}{x}}{t}
+ C \int_0^{\bartau(t)}  \!\!\! \int_\Omega |\barw|^2 \dd x \dd s \\
& \leq \tfrac{C_{\DD}^1}2\integlin{0}{\bartau(t)}{ \!\!\! \integ{\Om}{|e(\dot u_n)|^2}{x}}{t}
+ C\,.
\end{aligned}
\]
Inserting this into  \eqref{calc-4.1} we conclude \eqref{ap-u-lin-inter}
via Korn's inequality, again exploiting the definition of the
interpolants~\eqref{interpolants}. 
Finally, estimate  \eqref{apddu} ensues from a comparison argument in \eqref{DSw2},  
taking into account the previously proven \eqref{apDu-gamma},
\eqref{ap-u-lin-inter}, \eqref{apthetaH12}, as well as \eqref{ass-forces-n}.
\par
{\bf Fifth a priori estimate, ad \eqref{apthetaBV}: } Let $\kappa$
be as in \eqref{ass-K}. In \eqref{DSw3} we use a test function
 $\eta\in W^{1,\infty}(\Omega)$, 
thus we  find
\begin{equation}
\label{rhs-side}
\left| \int_\Omega\tfrac{\theta_n^k-\theta_n^{k-1}}{\taun}\,\eta\,\mathrm{d}x \right|
\leq\left|\int_\Omega\KK(z_n^k,\theta_n^k)\nabla\theta_n^k\ps\nabla\eta\,\mathrm{d}x\right|
+ \left|\pairing{}{ W^{1,\infty}(\Omega)}{\mathrm{RHS}_n^k}{\eta}\right|\,,
\end{equation}
where the terms on the right-hand side of \eqref{DSw3} are summarized in $\mathrm{RHS}_n^k$.
It follows from assumptions \eqref{ass-CD} and \eqref{ass-dataHeat} that
\begin{equation}\label{est-5.1}
\begin{aligned}
&
\left|\pairing{}{ W^{1,\infty}(\Omega)}{\mathrm{RHS}_n^k}{\eta}\right|
\\
& \leq { C} \left( \left\|
e\left(\tfrac{u_n^k-u_n^{k-1}}{\taun}\right)\right\|_{L^2(\Omega;\R^{d \times
d})}^2 + \| \theta\kk\|_{L^2(\Omega)}^2 + \left\| \tfrac{z\kk -
z\km}{\taun}\right\|_{L^1(\Omega)} + \|\hs\kk\|_{L^2(\partial\Omega)} + \| \hv\kk\|_{L^1(\Omega)}
 \right)
 \|\eta\|_{L^\infty(\Omega)}
\\
&
\doteq \Lambda \kk  \|\eta\|_{L^\infty(\Omega)}\,.
\end{aligned}
\end{equation}
Furthermore,  with \eqref{ass-K} we find for every $\alpha\in(1/2,1)$
\begin{align}
\nonumber
& \left|\int_\Omega\KK(z_n^k,\theta_n^k)\nabla\theta_n^k \ps
\nabla\eta\,\mathrm{d}x\right|
\\
\label{est-5.2}
&\leq \|\nabla\eta\|_{L^\infty(\Omega;\R^d)}c_2
\|((\theta_n^k)^\kappa+1)\nabla\theta_n^k\|_{L^1(\Omega;\R^d)}\\
\nonumber
&\leq \|\nabla\eta\|_{L^\infty(\Omega;\R^d)} c_2\left( \|
(\theta_n^k)^{(\kappa -\alpha+2)/2} \|_{L^2(\Omega)}
\|(\theta_n^k)^{(\kappa +\alpha-2)/2}\nabla\theta_n^k\|_{L^2(\Omega;\R^d)}
+\calL^d(\Omega)^{1/2}
\|\nabla\theta_n^k\|_{L^2(\Omega;\R^d)} \right)\,.
\end{align}
Inserting \eqref{est-5.1} and \eqref{est-5.2} into \eqref{rhs-side}
and summing over the index $k=1,\ldots,n$, we find for every
\emph{time-dependent} function $\eta\in
\rmC^0([0,T]; W^{1,\infty}(\Omega))$ that
\begin{align}
\nonumber
& \left| \int_0^{\bartau(t)} \int_\Omega \dot{\theta}_n \, \eta \dd x
\dd s \right|
\\
\nonumber
& \leq C \|\nabla\eta\|_{L^\infty((0,T)\times\Omega;\R^d)}
\left(\big\|\barth\big\|_{L^{\kappa-\alpha+2}((0,T)\times\Omega)}^{(\kappa-\alpha+2)/2}\|
\big(\barth\big)^{(\kappa+\alpha)/2} \|_{L^2(0,T; H^1(\Omega))} +
\big\|\nabla\barth\big\|_{L^2((0,T)\times\Omega;\R^d)} \right)
\\
\label{che-fatica}
&\phantom{\leq}
+ \| \eta\|_{L^\infty ((0,T)\times\Omega)}
\int_0^{\bartau(t)} \overline{\Lambda}_n \dd s\,,
\end{align}
where $\overline{\Lambda}_n$ denotes the piecewise constant
interpolant of the values ${(\Lambda \kk)}_k$.
Note that the estimate on
$\|(\theta_n^k)^{(\kappa +\alpha-2)/2}\nabla\theta_n^k\|_{L^2(\Omega;\R^d)}$ ensues
from \eqref{additional-info}   and   \eqref{est-alpha-later}. 
Now, observe that
\[
\big\|\barth\big\|_{L^{\kappa-\alpha+2}((0,T)\times\Omega)}^{(\kappa-\alpha+2)/2}
\leq C
\]
thanks to \eqref{apthetaLp}  if $p=\kappa-\alpha+2$ satisfies the
constraints in \eqref{apthetaLp}.   Recall that the parameter  $\alpha$ 
for which   \eqref{est-alpha-later} holds
can be chosen
arbitrarily close to $1$. Therefore,  such constraints for  $p=\kappa-\alpha+2$ are valid since, by \eqref{ass-K-b},
 $\kappa\in(1,\kappa_d)$ with $\kappa_d=5/3$ if $d{=}3$ and
$\kappa_d=2$ if $d{=}2$.  
Finally,
it follows from \eqref{ass-dataHeat-n},
\eqref{ap-u-lin-inter}, \eqref{apR1}, and \eqref{apthetaH12} that
$\int_0^T \overline{\Lambda}_n \dd t\leq C.$ Ultimately,  from
\eqref{che-fatica} we conclude~\eqref{apthetaBV}.
\end{proof}
%
\section{Passage from time-discrete to continuous}
\label{s:5}
%
Based on the a priori bounds deduced in Proposition \ref{Apriori},  exploiting 
compactness results \`a la Aubin-Lions  as well as 
a version of
Helly's selection principle,   we are now in a position
to extract a  subsequence of solutions of the time-discrete problems  converging to a limit triple $(u,z,\theta)$ in   suitable topologies. In \eqref{convs} below we have collected all of these convergences with some redundancies: for example, \eqref{convoz} and \eqref{convuz} imply 
\eqref{convozr} and \eqref{convptz}, but the latter are stated for later reference.
Subsequently, we will verify that the  triple  $(u,z,\theta)$    is an energetic solution of the time-continuous problem
 as
stated in Definition \ref{def4}.  
\begin{prop}[Convergence of the time-discrete solutions]
\label{Conv}
Let the assumptions of Theorem~\ref{thm:main} be satisfied. 
Then, there exists a triple $(u,z,\theta)\colon[0,T]\times\Omega\to\R^d \times\R\times[0,\infty)$
of regularity \eqref{reguu} such that for a.a.\
$x\in\Om$ the function $t\mapsto z(t,x)\in[0,1]$ is nonincreasing,
\eqref{strict-pos} holds, as well as \eqref{teta-pos+} under the assumption \eqref{h-pos+},
and there exists
a subsequence of the time-discrete solutions
${(\baru,\underu,u_n,\barz,\underz,\barth,\underth )}_n$
 from \eqref{interpolants}
such that
\begin{subequations}
\label{convs}
\begin{alignat}{4}
\label{convou}
\baru&\wtos\,\,&& u&&\text{ in }L^\infty(0,T;H^1_\mathrm{D}(\Omega;\R^d))\,,\\ \displaybreak[0]
\label{convu}
u_n&\rightharpoonup&& u &&\text{ in } H^1(0,T;H^1_\mathrm{D}(\Omega;\R^d))\,,\\ \displaybreak[0]
\dot u_n&\wtos&& \dot u&&\text{ in }L^\infty(0,T;L^2(\Omega;\R^d)) \label{convdotu}\,,\\ \displaybreak[0]
\label{convu-new}
\baru(t), \, u_n(t)&\rightharpoonup&& u(t) &&\text{ in } H^1_\mathrm{D}(\Omega;\R^d) \;\text{for all }t\in[0,T]\,,
\\ \displaybreak[0]
\label{ptcdu}
\dot u_n(t)&\rightharpoonup&& \dot u(t)&&\text{ in }L^2(\Omega;\R^d)\;\text{for all }t\in[0,T]\,,
\\ \displaybreak[0]
\label{convouz}
\barz\,,\, \underz&\wtos&& z&&\text{ in }L^\infty(0,T;W^{1,q}(\Omega))\cap L^\infty ((0,T)\times \Omega)\,,
\\ \displaybreak[0]
\label{convoz}
\barz(t)&\rightharpoonup&&{z}(t)&&\text{ in }W^{1,q}(\Omega) \text{ for all }t\in[0,T]\,,\\ \displaybreak[0]
\label{convozr}
\barz(t)&\to&&z(t)&&\text{ in }L^r(\Omega)\text{ for all }r\in[1,\infty)
\text{ and for all }t\in[0,T]\,,
\\ \displaybreak[0]
\label{convuz}
\underz(t)&\rightharpoonup&&{z}(t)&&\text{ in }W^{1,q}(\Omega)
\text{ for all }t\in[0,T]\backslash J\,,
\\ \displaybreak[0]
\label{convptz}
\underz(t)
&\to&&z(t)&&\text{ in }L^r(\Omega)\text{ for all }r\in[1,\infty)\text{ and for all }t\in[0,T]\backslash J\,,
\\ \displaybreak[0]
\label{convotheta}
\barth\,,\,\underth&\rightharpoonup&&\theta&&\text{ in }L^2(0,T;H^1(\Omega))\,,
\\ \displaybreak[0]
\label{convustrtheta}
\barth\,,\,\underth\,,\,\theta_n  &\to&&\theta&&\text{ in }L^2(0,T;Y) \quad \text{for all } Y \text{ such that }
H^1(\Omega) \Subset Y \subset W^{2,d+\delta}(\Omega)^*\,,
\\ \displaybreak[0]
\label{conv-theta-lp}
\barth\,,\,\underth\,,\,\theta_n  &\to&&\theta&&\text{ in }
L^p((0,T) \times \Omega) \quad \text{for all }
p\in\left\{
\begin{array}{ll}
[1,8/3)&\text{if }d{=}3\,,\\
{[1,3)}&\text{if }d{=}2\,,
\end{array}
\right.
\\
\label{ptw-conv-teta}
\theta_n(t) &\rightharpoonup && \theta(t)  &&
\text{ in } W^{2,d+\delta}(\Omega)^* \text{ for all } t \in [0,T]\,,
\end{alignat}
The set $J\subset[0,T]$ appearing  in
\eqref{convuz}--\eqref{convptz} denotes the jump set of
$z\in\BV([0,T];L^1(\Omega))$.
Finally,
\begin{equation}
\label{measure-convergence}
|\dot{z}_n| \to |\dot z| \quad \text{in the sense of measures on }  [0,T] \times \overline\Omega  \,.
\end{equation}
\end{subequations}
\end{prop}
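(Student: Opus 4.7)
The plan is to first apply Banach--Alaoglu to the a priori bounds of Proposition \ref{Apriori} and extract a not-relabeled subsequence converging to a limit triple $(u,z,\theta)$. The estimates \eqref{apDu}, \eqref{ap-u-lin-inter}, \eqref{ap-u-lin-linfty}, \eqref{apddu} give a limit $u$ satisfying \eqref{convu} and \eqref{convdotu}; then \eqref{convou} follows because $\baru -u_n \to 0$ uniformly in $L^2(\Omega;\R^d)$ by \eqref{ap-u-lin-linfty} and linear interpolation. Bounds \eqref{ap-z-w1q} and \eqref{apzLinfty} yield the weak-$*$ limit of $\barz$ in \eqref{convouz}; the piecewise constant interpolants $\barz$ and $\underz$ share this limit since $\|\barz -\underz\|_{L^\infty(0,T;L^1(\Omega))} = O(\taun)$ in view of \eqref{apR1} and the monotonicity. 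Finally \eqref{aptheta1}--\eqref{apthetaH12} produce the limit $\theta$ in \eqref{convotheta}, with $\barth$, $\underth$, and $\theta_n$ sharing the same limit thanks to the $\taun$-shift and \eqref{apthetaBV}. After a further a.e.\ pointwise extraction on $(0,T)\times\Omega$, the constraints $z\in[0,1]$ and the monotonicity of $t\mapsto z(t,x)$ transfer to the limit.

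\textbf{Pointwise-in-time convergences.} Next, since $\{u_n\}$ is equi-Lipschitz from $[0,T]$ into $L^2(\Omega;\R^d)$ by \eqref{ap-u-lin-linfty} and bounded in $L^\infty(0,T;H^1_\mathrm{D}(\Omega;\R^d))$, an Arzel\`a--Ascoli argument yields \eqref{convu-new} for $u_n$; the same for $\baru(t)$ because $\baru(t)-u_n(t)\to0$ in $L^2(\Omega;\R^d)$. The BV bound \eqref{apddu} on $\dot u_n$ in $W^{1,\gamma}_\mathrm{D}(\Omega;\R^d)^*$ combined with the uniform $L^2$-bound \eqref{ap-u-lin-linfty} and a vector-valued Helly selection principle gives weak convergence of $\dot u_n(t)$ in $L^2(\Omega;\R^d)$ at every $t$, the limit being identified with $\dot u(t)$ via \eqref{convdotu}, hence \eqref{ptcdu}. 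For $\barz$, a refined Helly argument in the reflexive target $W^{1,q}(\Omega)$, exploiting jointly \eqref{ap-z-w1q} and the $\BV([0,T];L^1(\Omega))$-bound from \eqref{apR1} and monotonicity, produces a weak-$W^{1,q}$ limit $z(t)$ for every $t\in[0,T]$, i.e.\ \eqref{convoz}; the strong convergence \eqref{convozr} follows from the compact embedding $W^{1,q}(\Omega)\Subset L^r(\Omega)$ for every $r\in[1,\infty)$. Since $z\in\BV([0,T];L^1(\Omega))$ has an at most countable jump set $J$, and $\underz(t)$ is just the $\taun$-shift of $\barz(t)$, the convergences \eqref{convuz}--\eqref{convptz} hold outside $J$. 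Analogously, Helly applied with the BV bound \eqref{apthetaBV} in $W^{2,d+\delta}(\Omega)^*$ produces \eqref{ptw-conv-teta}.

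\textbf{Strong convergence of $\theta$ and positivity.} The strong convergence \eqref{convustrtheta} follows from a generalized Aubin--Lions--Simon compactness theorem applied with the triple $H^1(\Omega)\Subset Y\subset W^{2,d+\delta}(\Omega)^*$ to \eqref{apthetaH12} and \eqref{apthetaBV}; the versions for $\underth$ and $\theta_n$ are obtained by using the $\taun$-shift controlled by \eqref{apthetaBV}. Interpolating \eqref{convustrtheta} with the uniform $L^p$-bound \eqref{apthetaLp}, with exponents slightly above the target ones, yields the strong $L^p$-convergences \eqref{conv-theta-lp}. The strict positivity \eqref{strict-pos}, and \eqref{teta-pos+} under the additional assumption \eqref{h-pos+}, then pass to the limit from the discrete bounds \eqref{tetak-pos}--\eqref{tetak-pos+h} via an a.e.\ pointwise subsequence extracted from \eqref{conv-theta-lp}.

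\textbf{Measure convergence.} The hardest point is \eqref{measure-convergence}. The plan is: (i) the total variation measures $|\dot z_n|$ on $[0,T]\times\overline\Omega$ are nonnegative and of uniformly bounded total mass by \eqref{apR1}, hence along a further subsequence $|\dot z_n|\wtos\mu$ for some nonnegative measure $\mu$; (ii) lower semicontinuity of the total variation on open sets combined with the strong convergence \eqref{convozr} forces $\mu\geq |\dot z|$; (iii) exploiting the unidirectional character of the discrete evolution one has $|\dot z_n|([0,T]\times\overline\Omega) = \int_\Omega(z_0-\barz(T))\,\mathrm{d}x$, and this quantity converges to $\int_\Omega(z_0-z(T))\,\mathrm{d}x = |\dot z|([0,T]\times\overline\Omega)$ thanks to \eqref{convozr} evaluated at $t=T$; (iv) equality of total masses combined with $\mu\geq|\dot z|$ forces $\mu=|\dot z|$, which is \eqref{measure-convergence}. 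The key subtlety is that step (iii) genuinely uses the monotonicity of $\barz(\cdot,x)$ to reduce the total variation to an endpoint difference.
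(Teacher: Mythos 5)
Your overall strategy coincides with the paper's proof: weak compactness from the estimates of Proposition \ref{Apriori}, a generalized Helly selection for the quantities that are BV in time, Aubin--Lions-type compactness plus interpolation for the temperature, positivity inherited from the discrete bounds \eqref{tetak-pos}--\eqref{tetak-pos+h}, and the mass-convergence argument for \eqref{measure-convergence} (your steps (i)--(iv) are precisely the argument the paper delegates to the cited reference). Two of your auxiliary justifications for the damage variable, however, would fail as stated. First, the claim $\|\barz-\underz\|_{L^\infty(0,T;L^1(\Omega))}=O(\taun)$ is false: the flow rule is rate-independent, so nothing prevents $z\kk$ from dropping by an amount of order one on a set of positive measure within a single time step; estimate \eqref{apR1} only controls the \emph{sum} over $k$ of these drops, not each individual one. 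The identification of the limits of $\barz$ and $\underz$ must therefore be done as you (and the paper) do later: both Helly limits are nonincreasing in time, hence have countable jump sets, they coincide at every continuity point via the shift relation $\underz(t)=\barz(t-\taun)$ together with monotonicity, and since a countable set is Lebesgue-null the weak-$*$ limits in $L^\infty(0,T;W^{1,q}(\Omega))$ agree; consistently, \eqref{convuz}--\eqref{convptz} are only claimed outside $J$. Second, the embedding $W^{1,q}(\Omega)\Subset L^r(\Omega)$ is \emph{not} compact (nor even continuous) for every $r\in[1,\infty)$: assumption \eqref{G-growth} only requires $q>1$, so for $q\le d$ the Sobolev exponent $q^*<\infty$ restricts the admissible $r$. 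The correct route to \eqref{convozr}, which is the one taken in the paper, is compactness into $L^1(\Omega)$ (or any $r<q^*$) combined with the uniform bound $0\le\barz\le1$ from \eqref{apzLinfty} and the interpolation $\|\barz(t)-z(t)\|_{L^r(\Omega)}\le\|\barz(t)-z(t)\|_{L^1(\Omega)}^{1/r}\|\barz(t)-z(t)\|_{L^\infty(\Omega)}^{1-1/r}$. Both repairs use only material already at your disposal; the remaining steps (Arzel\`a--Ascoli for $u_n(t)$ in place of the paper's interpolation identity, Helly plus a density argument for $\dot u_n(t)$, Aubin--Lions and interpolation for $\theta$) are essentially the paper's arguments.
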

\begin{proof}
{\bf Convergence of the displacements: } The convergences
\eqref{convou}, \eqref{convu}, and \eqref{convdotu} follow by
compactness from \eqref{apDu}, \eqref{ap-u-lin-inter}, and
\eqref{ap-u-lin-linfty}. As $u_n(t)-\baru(t)=(t-t_n^k)\dot u_n(t)$
and $u_n(t)-\underu(t)=(t-t_n^{k-1})\dot u_n(t)$, we immediately
deduce from \eqref{convu} that the sequences $u_n$, $\baru$, and
$\underu$ have the same limit in
$L^\infty(0,T;H^1_\mathrm{D}(\Omega;\R^d))$,
and the pointwise weak convergences \eqref{convu-new} ensue. 
 Furthermore, due to
estimate \eqref{apddu}, by compactness, there exists a further
subsequence such that $\dot u_n\rightharpoonup \dot u$ in
$\BV([0,T];W^{1,\gamma}_\mathrm{D}(\Omega;\R^d)^*)$ as well as $\dot
u_n(t)\rightharpoonup \dot u(t)$ in $W^{1,\gamma}_\mathrm{D}(\Omega;\R^d)^*$
for all $t\in[0,T]$. Thanks to \eqref{ap-u-lin-linfty}, arguing by
contradiction and using that $L^2(\Omega;\R^d)$ is dense in
$W^{1,\gamma}_\mathrm{D}(\Omega;\R^d)^*$, we may also conclude that $\dot
u_n(t)\rightharpoonup \dot u(t)$ in $L^2(\Omega;\R^d)$ for all
$t\in[0,T]$, i.e.\ \eqref{ptcdu}.
\par
{\bf Convergence of the damage variables: } From estimates
\eqref{apR1}
 on the $\calR_1$-total variation of $(\barz)_n$ (by monotonicity of $\barz$),
 combined with  \eqref{ap-z-w1q},
a generalized version of Helly's
selection principle,   cf.\ e.g.\ \cite[Theorem 6.1]{MieThe04RIHM}, 
allows us to extract a subsequence such
that $\barz(t)\wto z(t)$ and $\underz(t)\wto \underline z(t)$ weakly
in $W^{1,q}(\Om)$ for all $t\in[0,T]$, and $z,\underline z\in
L^\infty(0,T;W^{1,q}(\Omega))$. Moreover, the limit functions
$z$ and $\underline z$ inherit the monotonicity in time from
$\barz$ and $\underz$, hence $z,\underline z\in \BV([0,T];L^1(\Om))$,
and their jump sets $J$ and $\underline J$ are
at most countable. Let $t\in[0,T]\backslash (J\cup\underline J)$
fixed. Then, by \eqref{interpolants}, for every $n\in\N$ we have
$\barz(t-\tau_n)=\underz(t)$ and therefore as $n{\to}\infty$ we get
${z}(t)=\underline z(t)$. Let now $t\in J\cup\underline J$  and let
${(t_j^-)}_j\,,{(t_j^+)}_j\subset [0,T]\backslash(J\cup\underline J)$ be
such that $t_j^-\nearrow t$ and $t_j^+\searrow t$. Since $z$ and $\underline z$
coincide on $[0,T]\backslash(J\cup\underline J)$, we
deduce that the left and  the  right limit satisfy
$z^-(t)=\lim_jz(t_j^-)=\lim_j\underline z(t_j^-)=\underline z^-(t)$
and $z^+(t)=\lim_jz(t_j^+)=\lim_j\underline z(t_j^+)=\underline
z^+(t)$. Therefore $J=\underline J$ and the convergences
\eqref{convouz}, \eqref{convoz}, \eqref{convuz} hold. From this,
using \eqref{apzLinfty} we conclude that \eqref{convozr} and
\eqref{convptz} hold true as well.
In this line, we conclude by observing that \eqref{measure-convergence}
follows from the fact that $\int_\Omega (z_n(0)-z_n(T))\,{\mathrm d} x$,
i.e.\ the total variation of
$\dot{z}_n$ on $[0,T]\times\overline{\Omega}$, converges to the total variation
$\int_\Omega (z(0)-z(T))\,{\mathrm d} x$ of $\dot z$, also relying on
the argument from \cite[Proposition 4.3, proof of (4.80)]{Roub10TRIP}.
\par
{\bf Convergence of the temperature variables: } Due to estimate
\eqref{apthetaH12} we have $\barth\rightharpoonup\theta$ 
in $L^2(0,T;H^1(\Omega))$. Exploiting the definition of the
interpolants \eqref{interpolants}, similarly to the arguments for
the damage variables, we conclude that also
$\underth\rightharpoonup\theta$ in $L^2(0,T;H^1(\Omega))$, thus
\eqref{convotheta} is proven. From this, convergences
\eqref{convustrtheta} and \eqref{conv-theta-lp} for
$(\barth,\underth)_n$ follow by a
generalized Aubin-Lions Lemma, cf.\ \cite[Corollary 7.9, p.\ 196]{Roub05NPDE}, 
making use of the estimates \eqref{apthetaH12},
\eqref{apthetaLp}, and \eqref{apthetaBV}. Taking into account
that $|\theta_n (t,x)|\leq \max\{ |\barth (t,x)|,|\underth (t,x)|
\}$ for almost all $(t,x)\in (0,T) \times \Omega$,  (a generalized
version of) the Lebesgue Theorem yields convergence
\eqref{conv-theta-lp} for $(\theta_n)_n$ as well. All in all, we
conclude the weak convergence \eqref{convotheta}, as well as
\eqref{convustrtheta}, for $(\theta_n)_n$. 
 Convergence \eqref{ptw-conv-teta} is a consequence of
\cite[Theorem 6.1]{MieThe04RIHM}. The positivity properties
\eqref{strict-pos} and \eqref{teta-pos+} (under the additional
\eqref{h-pos+}) then follow from their discrete analogues
\eqref{tetak-pos} and \eqref{tetak-pos+h}, respectively, combined
with \eqref{apthetaLp}.
\end{proof}
The fact that the limit triple $(u,z,\theta)$ is an energetic
solution of the limit problem will be verified in Sections
\ref{Mombal}--\ref{EEHeat} right below.  For this, in Section
\ref{Mombal}, we first pass from time-discrete to continuous in the
weak momentum balance \eqref{disc-momentum} using suitably chosen
time-discrete test functions and deduce a time-continuous limit
\emph{inequality} for the mechanical energy balance
\eqref{discr-mech-energ} by lower semicontinuity arguments.
Secondly, in Section \ref{Semistab} we pass to the limit in the
semistability inequality \eqref{disc-semistab} using mutual
recovery sequences. As a further step in Section \ref{EEHeat} it has
to be verified that the limit triple $(u,z,\theta)$ indeed satisfies
the mechanical energy balance as an \emph{equality} by deducing the reverse
inequality from the momentum balance and the semistability so far
obtained. This result allows  us  to conclude the convergence of the
viscous dissipation terms, which, in turn, is crucial for the limit
passage in the heat equation \eqref{disc-heat}.

Altogether, these steps amount to the following
\begin{prop}[Energetic solution of the limit problem]\label{prop:existence-limit} 
Let the assumptions of Theorem~\ref{thm:main} be satisfied 
and let $(u,z,\theta)$ be a triple of regularity \eqref{reguu} 
obtained as a limit, in the sense of convergences \eqref{convs}, 
of a sequence of solutions to Problem  \ref{prob-discrete}. 
Then,  $(u,z,\theta)$ is an energetic solution of the
time-continuous problem \eqref{ourPDE}, supplemented with the
boundary conditions \eqref{bc}, in the sense of 
Definition \ref{def4}.
\end{prop}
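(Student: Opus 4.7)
The plan is to verify each requirement of Definition \ref{def4} by passing to the limit $n\to\infty$ in the time-discrete formulation of Proposition \ref{EFdiscr}, relying on the convergences collected in \eqref{convs}. The unidirectionality of $z(\cdot,x)$ is inherited from $\barz(\cdot,x)$ via the pointwise convergence \eqref{convozr}, and the initial conditions follow from the constructions \eqref{tilde-uzero}, \eqref{assu-init} together with \eqref{convu-new}, \eqref{ptcdu}, and \eqref{ptw-conv-teta}. The positivity \eqref{strict-pos} (resp.\ \eqref{teta-pos+}) is already noted in Proposition \ref{Conv}.

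First I would tackle the weak momentum equation. Testing \eqref{disc-momentum} with discrete interpolants of any admissible test $v$ (using \eqref{tilde-uzero}-type regularization at the expense of vanishing error), the linear-in-velocity terms pass to the limit by \eqref{convdotu} and \eqref{ptcdu}; the elastic stress $\CC(\barz)e(\baru)$ is handled by combining \eqref{convozr} (hence strong $L^r$ convergence of $\CC(\barz)$ via continuity of $\CC$) with the weak convergence of $e(\baru)$ in \eqref{convou}; the viscous stress $\DD(\underz,\underth)e(\dot u_n)$ is treated analogously thanks to \eqref{convptz}, \eqref{conv-theta-lp}, and the continuity and boundedness of $\DD$, so that $\DD(\underz,\underth)\to \DD(z,\theta)$ strongly in every $L^p$, $p<\infty$, and pairs with the weak $L^2$ convergence of $e(\dot u_n)$. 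The thermal stress $\barth\,\BB$ passes to the limit by \eqref{conv-theta-lp}, and the regularizing term $\taun|e(\baru)|^{\gamma-2}e(\baru)$ vanishes by \eqref{apDu-gamma}. The loading term uses \eqref{ass-forces-n}. Next I pass to the limit in the discrete mechanical energy inequality \eqref{disc-mech-energy-ineq} obtaining a $\le$-version of \eqref{mech-energy-ineq}: weak lower semicontinuity in \eqref{ptcdu} and \eqref{convou} handles the kinetic and $\CC$-parts, Remark \ref{LSCG} handles $\mathcal{G}$, the viscous dissipation is lower semicontinuous by an Ioffe-type argument (strong convergence of the coefficient and weak $L^2$ convergence of $e(\dot u_n)$), and the $\theta\,\BB$-term passes by strong $\times$ weak convergence.

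Next I would pass to the limit in the semistability \eqref{disc-semistab}. For any competitor $\tilde z\in\calZ$ I build a mutual recovery sequence of the classical form $\tilde z_n:=\min\{\tilde z,\barz(t)\}\in\calZ$, which satisfies $\tilde z_n\le\underz(t)$ for $n$ large (by monotonicity in time), so $\calR_1(\tilde z_n-\barz(t))<\infty$ and $\tilde z_n\to\tilde z\wedge z(t)$ strongly in $L^r$ for every finite $r$; the weak $W^{1,q}$-convergence is then obtained as well using the lattice structure of $W^{1,q}$. Inserting $\tilde z_n$ in \eqref{disc-semistab}, the continuity properties of $\calG$ on its domain (Remark \ref{LSCG}), the Lipschitz dependence of $\CC$ together with the bound on $e(\underu)$, the continuity of $\calR_1$ along monotone sequences, and \eqref{ass-forces-n-bis} yield \eqref{semistab-general} at every $t\in[0,T]$. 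The reverse inequality in the mechanical energy balance is now obtained by the standard rate-independent-with-inertia argument: use $\dot u$ as a test function in the time-continuous momentum equation (legitimate by Remark \ref{rmk:more-regularity} and the Gelfand identity \eqref{gelfand}) and add the semistability integrated against the measure $|\dot z|$, which provides the $\geq$-inequality complementary to the one already obtained, thus giving the equality \eqref{mech-energy-ineq}.

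The main obstacle is the limit passage in the heat equation \eqref{disc-heat}. The difficulty is that its right-hand side contains the \emph{quadratic} term $\DD(\underz,\underth)e(\dot u_n)\psm e(\dot u_n)$, for which $L^2$-weak convergence of $e(\dot u_n)$ is insufficient. The crucial observation is that, matching the $\leq$-inequality coming from lower semicontinuity with the $\ge$-inequality just derived, all the liminf/limsup arguments must be equalities; in particular, by the standard Visintin/strict-convexity trick applied to the viscous dissipation integral, one upgrades $e(\dot u_n)\rightharpoonup e(\dot u)$ to strong convergence in $L^2(0,t;L^2(\Omega;\R^{d\times d}))$. Combined with the measure convergence \eqref{measure-convergence} for $|\dot z_n|$, this allows the passage to the limit in all right-hand-side terms of \eqref{disc-heat}. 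The left-hand side is handled by \eqref{ptw-conv-teta}, the time-derivative term by \eqref{convotheta}, and the conductivity term $\KK(\barz,\barth)\nabla\barth$ by combining the growth \eqref{ass-K-b} and continuity of $\KK$ with the strong convergences \eqref{convozr}, \eqref{conv-theta-lp} for the coefficient and the weak convergence \eqref{convotheta} for $\nabla\barth$, using the enhanced integrability $(\barth)^{(\kappa+\alpha)/2}\in L^2(H^1)$ from \eqref{est-alpha-later} to control the critical cross-term. This completes the identification of $(u,z,\theta)$ as an energetic solution.
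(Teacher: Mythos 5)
Your overall architecture coincides with the paper's: limit passage in the momentum balance with regularized test functions, a lower-semicontinuity energy inequality, semistability via a mutual recovery sequence, the reverse energy inequality from testing with $\dot u$ plus the semistability, the resulting strong convergence of $e(\dot u_n)$ in $L^2$ (your ``Visintin trick'' is exactly the paper's Lemmas on quadratic forms and stronger convergences), and finally the heat equation. However, there is a genuine gap in your semistability step. You take the naive competitor $\tilde z_n:=\min\{\tilde z,\barz(t)\}$ and then invoke ``the continuity properties of $\calG$ on its domain'' to pass to the limit. This continuity requires \emph{strong} convergence of $(\tilde z_n,\nabla\tilde z_n)$ in $L^r\times L^q$, which fails here: on the set $\{\tilde z>\barz(t)\}$ the gradient of $\tilde z_n$ is $\nabla\barz(t)$, which converges only \emph{weakly} in $L^q$ (the paper allows any $q>1$, so no uniform convergence of $\barz(t)$ is available). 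The paper explicitly points out that $\calG(\tilde\zeta_n,\nabla\tilde\zeta_n)\to\calG(\tilde\zeta,\nabla\tilde\zeta)$ cannot be shown, and instead proves only a limsup inequality for the \emph{difference} of energies, using the Thomas--Mielke competitor $\min\{\barz(t),\max\{\tilde z-\delta_n,0\}\}$ with $\delta_n=\|\barz(t)-z(t)\|_{L^q}^{1/q}$, so that by Markov's inequality the set where no cancellation occurs has vanishing measure, and then lower semicontinuity of $\calG$ closes the estimate. Your plain minimum could in principle be salvaged, but only by a cancellation argument of the same flavor (e.g.\ using that $\calG(\min\{\tilde z,\barz(t)\},\cdot)+\calG(\max\{\tilde z,\barz(t)\},\cdot)=\calG(\tilde z,\cdot)+\calG(\barz(t),\cdot)$ pointwise and applying lower semicontinuity of $\calG$ along $\max\{\tilde z,\barz(t)\}\rightharpoonup z(t)$ in $W^{1,q}$), not by continuity of $\calG$; as written, the step does not go through.

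A secondary, smaller issue: for the reverse mechanical energy inequality you propose to ``add the semistability integrated against the measure $|\dot z|$,'' which is not a well-defined operation. The rigorous version is the Riemann-sum argument of the paper: test the semistability at $s_n^{k-1}$ with $z(s_n^k)$, sum over $k$, and pass to the limit along partitions chosen (via the Hahn-type result quoted in \eqref{140522}) so that the commutator term $\int\int[\CC(z(s_n^k))-\CC(z(s))]e(u):e(\dot u)$ vanishes; this partition-selection step is a genuine ingredient you omit. The remaining parts of your plan (momentum limit by strong-times-weak pairing and vanishing $\gamma$-regularization, lower semicontinuity, strong convergence of $e(\dot u_n)$ from matching the energy balances, the $L^{1+\delta}$-bound on $\KK(\barz,\barth)\nabla\barth$ via the enhanced integrability, and the measure convergence of $|\dot z_n|$) match the paper's route.
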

\begin{proof}
The statement of the  proposition follows directly by combining Propositions \ref{prop:Mombal},
\ref{MechEE}, and
\ref{Heat} and Theorem \ref{MRS}.
\end{proof}
%
%
%
\subsection{Limit passage in the momentum balance and the energy inequalities}
\label{Mombal}
%
Based on the convergence properties \eqref{convs} we now pass from time-discrete to time-continuous
in the weak momentum balance. By lower semicontinuity we will then
carry out the limit passage in the mechanical as well as  in  the total energy  inequality
and obtain their analogues for the
limit problem.
\par
 Let us mention in advance that, while  the passage to the limit in most of the terms of the momentum balance  can be treated in a  straightforward
 way by exploiting the convergence properties \eqref{convs}, 
the quadratic terms arising from the
stored elastic energy and the viscous dissipation,  which involve the  state-dependent coefficients
 $\DD(\underz,\underth)$ and $\CC(\barz)$, need special attention.
For these terms the limit will be deduced by exploiting the $L^\infty$-bounds \eqref{ass-CD}
on $\CC$ and $\DD$ and the dominated convergence theorem.
\begin{prop}[Limit passage in the weak momentum balance]
\label{prop:Mombal}
Let the assumptions of Theorem~\ref{thm:main} be satisfied.
Then, a limit triple $(u,z,\theta)$ extracted  as  in Proposition
\ref{Conv} solves the time-continuous momentum balance
\eqref{e:weak-momentum-variational} at every $t\in[0,T]$.  In
particular,   it holds $\dot u\in H^1(0,T;H^1_\mathrm{D}(\Omega;\R^d)^*)
\cap
\rmC^0_{\mathrm{weak}} ([0,T];L^2(\Omega;\R^d))$. 
\end{prop}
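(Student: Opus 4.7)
The plan is to pass to the limit $n\to\infty$ in the discrete weak momentum balance \eqref{disc-momentum} against suitably chosen discrete test functions, using the convergences of Proposition~\ref{Conv}, and then to recover the improved regularity for $\dot u$ by a comparison argument in the equation, as in Remark~\ref{rmk:more-regularity}.

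For the choice of test functions, I would exploit the density of $\rmC^1([0,T];W^{1,\gamma}_\mathrm{D}(\Om;\R^d))$ in $L^2(0,T;H^1_\mathrm{D}(\Om;\R^d)) \cap W^{1,1}(0,T;L^2(\Om;\R^d))$ to reduce the problem to a smooth $v$, and then set $v_n^k := v(t_n^k)$. The associated interpolants then satisfy $\barv \to v$ and $v_n \to v$ in $L^2(0,T;H^1_\mathrm{D}(\Om;\R^d))$ (indeed uniformly), and $\dot v_n \to \dot v$ in $L^2(0,T;L^2(\Om;\R^d))$; the general class of test functions is recovered at the end by approximation.

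Now fix $t \in [0,T]$. The pointwise inertial term passes to the limit thanks to \eqref{ptcdu}, and the integrated one thanks to \eqref{convdotu} (which in particular gives $\dot u_n(\cdot {-} \tau_n) \weakto \dot u$ in $L^2(0,T;L^2(\Om;\R^d))$) paired with the strong convergence of $\dot v_n$. The crucial step concerns the viscous and elastic terms. Up to the subsequence of Proposition~\ref{Conv} one has $\underz\to z$, $\barz\to z$, and $\underth\to\theta$ almost everywhere in $(0,T)\times\Om$ (by \eqref{convozr}, \eqref{convptz}, and \eqref{conv-theta-lp}); then, thanks to the continuity \eqref{conti} and the $L^\infty$-bounds \eqref{conti-c} and \eqref{assCD-3}, the dominated convergence theorem yields
\[
\DD(\underz,\underth)\,e(\barv)\to \DD(z,\theta)\,e(v), \qquad \CC(\barz)\,e(\barv)\to \CC(z)\,e(v) \qquad \text{strongly in } L^2((0,T)\times\Om;\Rsym);
\]
combined with the weak convergences $e(\dot u_n)\weakto e(\dot u)$ from \eqref{convu} and $e(\baru) \weakto e(u)$ from \eqref{convou}, this closes the limit in both the viscous and the elastic contributions. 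The thermal coupling $\int\!\int \barth\,\BB\psm e(\barv)\dd x \dd s$ passes thanks to the strong convergence $\barth \to \theta$ in $L^2((0,T)\times\Om)$ from \eqref{conv-theta-lp}, and the loading term thanks to \eqref{ass-forces-n}. The $\gamma$-regularization vanishes, since H\"older's inequality combined with the key bound \eqref{apDu-gamma} yields
\[
\tau_n \,\Big|\!\int_0^{\bartau(t)}\!\!\!\int_\Om \!|e(\baru)|^{\gamma-2} e(\baru) \psm e(\barv) \dd x\dd s\Big| \le C\,\tau_n^{1/\gamma}\,\|\barv\|_{L^\infty(0,T;W^{1,\gamma}_\mathrm{D}(\Om;\R^d))} \,\to\, 0.
\]

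Once \eqref{e:weak-momentum-variational} has been established, a comparison in the equation, in the spirit of Remark~\ref{rmk:more-regularity}, gives $\ddot u \in L^2(0,T;H^1_\mathrm{D}(\Om;\R^d)^*)$; coupled with the uniform bound $\dot u \in L^\infty(0,T;L^2(\Om;\R^d))$ inherited from \eqref{ap-u-lin-linfty} via \eqref{convdotu}, the Gelfand-triple argument delivers $\dot u \in H^1(0,T;H^1_\mathrm{D}(\Om;\R^d)^*) \cap \rmC^0_{\mathrm{weak}}([0,T];L^2(\Om;\R^d))$. The main difficulty is precisely the limit passage in the elastic and viscous terms, where only the weak convergence of $e(\baru)$ and $e(\dot u_n)$ is available; this is overcome because $\CC(\barz)\,e(\barv)$ and $\DD(\underz,\underth)\,e(\barv)$ converge \emph{strongly} in $L^2$ thanks to the a.e.\ convergence of $(\barz,\underz,\underth)$ and the $L^\infty$-bounds on $\CC,\DD$.
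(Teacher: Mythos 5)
Your argument is correct and follows essentially the same route as the paper's proof: reduction by density to smooth test functions taking values in $W^{1,\gamma}_\mathrm{D}(\Om;\R^d)$ (so that they are admissible in \eqref{disc-momentum}), dominated convergence for the state-dependent coefficients $\DD(\underz,\underth)$, $\CC(\barz)$ applied to the strongly converging test strains, weak--strong pairing with $e(\dot u_n)$ and $e(\baru)$, the $\tau_n^{1/\gamma}$ bound \eqref{apDu-gamma} to kill the $\gamma$-regularization, and Remark \ref{rmk:more-regularity} for the regularity of $\dot u$. The only differences are cosmetic (pointwise evaluations $v(t\kk)$ instead of local means for the discrete test functions, and a directly stated density claim where the paper cites \cite{Bure98SSOD,Roub05NPDE}).
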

\begin{proof} 
Let  $v\in L^2(0,T;H^1_\mathrm{D}(\Omega;\R^d))\cap
H^{1}(0,T;L^2(\Omega;\R^d))$ be a test function for   \eqref{e:weak-momentum-variational}. It follows from, e.g., 
 \cite[p.\ 56, Corollary 2]{Bure98SSOD} 
and \cite[p.\ 189, Lemma 7.2]{Roub05NPDE}, that for every 
$\eps>0$ there exists
\begin{equation}
\label{recseqv}
\begin{split}
&v^\star\in L^2(0,T;\rmC^1(\overline{\Omega};\R^d))\cap
L^2(0,T;H_\mathrm{D}^{1}(\Omega;\R^d))
\cap H^{1}(0,T;L^2(\Omega;\R^d))\colon \\
&\|v-v^\star\|_{L^2(0,T;H_\mathrm{D}^{1}(\Omega;\R^d))\cap H^{1}(0,T;L^2(\Omega;\R^d))}\leq\eps
\;\text{ and }\;v^\star=v\text{ on }\partial_\mathrm{D}\Omega\,\text{ in  the  trace sense.}
\end{split}
\end{equation}
In particular, $v^\star \in L^2(0,T;W^{1,\gamma}(\Omega;\R^d))$, with 
$\gamma>4$ the same exponent as in  the   regularizing  term $-\taun\div(|e(u)|^{\gamma-2} e(u))$
in time-discrete momentum balance \eqref{disc-momentum}. Therefore, the discrete test functions 
$(v^\star)\kk : = \frac1{\tau_n} \int_{t\km}^{t\kk} v^\star(s) \dd s $  for all $k=0,\ldots, n$ fulfill
$(v^\star)\kk  \in W^{1,\gamma}(\Omega;\R^d)$, so that   they are admissible  test functions for 
\eqref{disc-momentum}. 
We now  
consider the 
piecewise constant and linear
interpolants $\barv^\star$ and $v^\star_n$ of the elements $((v^\star)\kk )_{k=0}^n$.  
In view of \eqref{recseqv}, it can be checked that
\begin{subequations}
\label{recseqvconv}
\begin{equation}
\label{recseqvconv-a}
\begin{split}
&\barv^\star \to v^\star\text{ in }L^2(0,T;H^{1}_\mathrm{D}(\Omega;\R^d))
\;\text{ and }\; v_n^\star\to v^\star\text{ in }H^{1}(0,T;L^2(\Omega;\R^d))\,,\\
& \tau_n^{1/\gamma} \|e(\barv^\star)\|_{L^\gamma(0,T;L^\gamma(\Omega;\R^{d \times d}))}\to0\,.
\end{split}
\end{equation}
Observe that \eqref{recseqvconv-a}
implies
\begin{equation}
\label{recseqvconv-b}
v_n^\star(t)\to v^\star\text{ in }L^2(\Omega;\R^d)\text{ for all }t\in[0,T]\,.
\end{equation}
\end{subequations}
Using such sequences ${(\barv^\star,v^\star_n)}_n$ of interpolants of smooth, dense test functions,
we can now carry out the limit passage in  \eqref{disc-momentum}. 
 By the convergence properties of the given data
\eqref{ass-forces-n}  and for the smooth test functions
\eqref{recseqvconv}, together with the convergence results
\eqref{ptcdu},  \eqref{convu} and \eqref{convotheta} we immediately
find
\begin{align*}
\nonumber
&\rho\int_\Omega\big(\dot u_n(t) {\cdot} v^\star_n(t)-\dot u_0{\cdot}
v^\star_n(0)\big)\,\mathrm{d}x
-\int_0^{\bartau(t)}\!\!\!\left(\int_\Omega \big(\rho\dot
u_n(s{-}\tau_n){\cdot}\dot v^\star_n
-\barth\,\BB:e(\barv^\star)\big)\,\mathrm{d}x
- \pairing{}{H^1_\mathrm{D}(\Om;\R^d)}{\overline{\bigF}_n}{\barv^\star}\right)\,\mathrm{d}s\\
&\longrightarrow\;\rho\int_\Omega\big(\dot u(t){\cdot} v^\star(t)-\dot u_0{\cdot} v^\star(0)\big)\,\mathrm{d}x
-\int_0^t\!\!\!\left(\int_\Omega\big(\rho\dot u{\cdot}\dot v^\star
-\theta\,\BB:e(v^\star)\big)\,\mathrm{d}x
-\pairing{}{H^1_\mathrm{D}(\Om;\R^d)}{\bigF}{v^\star}\right)\,\mathrm{d}s \,.
\end{align*}
Moreover, the convergence of the term involving the $\gamma$-Laplacian follows from the estimate
\begin{equation*}
\left|\int_0^t\!\int_\Omega\tau_n
|e(\baru)|^{\gamma-2}e(\baru):e(\barv^\star)\,\mathrm{d}x\,\mathrm{d}s
\right|
\leq
\taun^{\tfrac{\gamma-1}{\gamma}}\|e(\baru)\|^{\gamma-1}_{L^\gamma((0,T)\times\Omega;\R^{d \times d})}
\taun^{\tfrac 1\gamma}\|e(\barv^\star)\|_{L^\gamma((0,T)\times\Omega;\R^{d \times d})}\;\to0\,,
\end{equation*}
due to the uniform bound \eqref{apDu-gamma}
and the convergence of ${(v^\star_n)}_n$ by \eqref{recseqvconv}.
\par
Finally, in order to handle the  remaining  quadratic terms with state-dependent coefficients  in \eqref{disc-momentum}, we will  prove that 
\begin{equation}
\label{provestrong}
\big(\DD(\underz,\underth)+\CC(\barz)\big)
e(\barv^\star)
\to\big(\DD(z,\theta)+\CC(z)\big)e(v^\star)\;\;\text{strongly in }L^2((0,T)\times\Omega;\R^{d \times d})\,.
\end{equation}
Then, the convergence of the quadratic terms with state-dependent
coefficients follows from weak-strong convergence, using that both
$e(\dot u_n)\rightharpoonup e(\dot u)$ and $e(u_n)\rightharpoonup
e(u)$ weakly in $L^2(0,T;L^2(\Omega;\R^{d \times d}))$ by
\eqref{convu}. Now, to verify \eqref{provestrong} we are going to
apply the dominated convergence theorem. For this, we observe that
for a.e.\ $t\in(0,T)$ we have
$|\big(\DD(\underz(t),\underth(t))+\CC(\barz(t))\big)
:e(\barv^\star(t))| \to|\big(\DD(z(t),\theta(t))+\CC(z(t))\big)
:e(v(t))|$ pointwise a.e.\ in $\Omega$, by assumption \eqref{conti}
and since by convergence results \eqref{convptz}  and
\eqref{convustrtheta} we can resort to a subsequence
${(\underz(t),\barz(t),\underth)}_n$ that converges pointwise a.e.\ in
$\Omega$ for a.e.\ $t\in(0,T)$. Moreover, by assumption
\eqref{ass-CD} 
we find an integrable, convergent majorant, i.e.,
\begin{equation*}
\big|\big(\DD(\underz,\underth)+\CC(\barz)\big)
e(\barv^\star)\big|
\leq (C_\DD^2+C_\CC^2)|e(\barv^\star)|\,\to\,(C_\DD^2+C_\CC^2)|e(v^\star)|
\end{equation*}
pointwise a.e.\ in $(0,T)\times\Omega$ and with respect to the strong
$L^2((0,T)\times\Omega))$-topology by \eqref{recseqvconv}.
Hence,  a generalized version of the Dominated Convergence Theorem, cf.\, e.g.,  \cite[Section 4.4, Theorem 19]{Royden},  yields \eqref{provestrong}.
This concludes the limit passage in the momentum balance for smooth test function as in \eqref{recseqv}.
By density this result carries over to all test functions
$v\in L^2(0,T;H_\mathrm{D}^{1}(\Omega;\R^d))\cap H^{1}(0,T;L^2(\Omega;\R^d))$.
As by \eqref{ptcdu} we have $\dot u(t)\in L^2(\Om;\R^d)$ for every $t\in[0,T]$, we immediately deduce that
\eqref{e:weak-momentum-variational} holds true at all $t\in [0,T]$.

The last assertion follows from Remark \ref{rmk:more-regularity}.
\end{proof}
\begin{lemma}[Energy inequalities by lower semicontinuity]\label{lemma:5.5} 
Let the assumptions of Theorem~\ref{thm:main} be satisfied 
and let $(u,z,\theta)$
be a limit triple given by Proposition~\ref{Conv}.
Then for every $t\in[0,T]$ we have
\begin{equation}\label{en-ineq-lsc}
\begin{aligned}
&\tfrac\rho 2\int_\Omega\!|\dot u(t)|^2 \dd x+\E(t,u(t),z(t))
+\int_\Omega (z(t){-}z_0)\dd x+\int_0^t\!\int_\Omega(\DD(z,\theta)e(\dot u){-}\theta\,\BB):e(\dot u)\dd x\dd s\\
&\le\tfrac\rho 2\displaystyle\int_\Omega|\dot u_0|^2\dd x
+\E(0,u_0,z_0)- \int_0^t \pairing{}{H^1_\mathrm{D}(\Om;\R^d)}{\dot{\bigF}}{v}\,\mathrm{d}s \,.
\end{aligned}
\end{equation}
\end{lemma}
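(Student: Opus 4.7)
The plan is to pass to the $\liminf$ as $n\to\infty$ in the discrete mechanical energy inequality \eqref{disc-mech-energy-ineq} at fixed $t\in[0,T]$, processing each term via the convergences collected in Proposition \ref{Conv}. On the left-hand side, weak lower semicontinuity of the $L^2$-norm together with \eqref{ptcdu} handles the kinetic term $\tfrac{\rho}{2}\int_\Omega |\dot u_n(t)|^2 \dd x$. For the stored energy $\E_n(t,\baru(t),\barz(t))$, I split it as in \eqref{discr-mech-energ}: (i) the elastic term $\tfrac12\int_\Omega \CC(\barz(t))e(\baru(t))\psm e(\baru(t))\dd x$ is treated by extracting, via \eqref{convozr}, a subsequence along which $\barz(t)\to z(t)$ pointwise a.e.\ in $\Omega$, invoking the continuity \eqref{conti} and the uniform bound \eqref{conti-c} for $\CC$ to obtain strong $L^p$-convergence of $\CC(\barz(t))$ by dominated convergence, and combining with the weak convergence \eqref{convu-new} of $e(\baru(t))$ in $L^2$ and convexity of the integrand in $e(u)$ via an Ioffe-type lower semicontinuity result; (ii) the gradient term $\calG(\barz(t),\nabla\barz(t))$ passes to the $\liminf$ by the weak lower semicontinuity in $W^{1,q}$ stated in Remark \ref{LSCG}; (iii) the regularizing contribution $\tfrac{\taun}{\gamma}\int_\Omega |e(\baru(t))|^{\gamma}\dd x$ is nonnegative and can be dropped; (iv) the loading $-\pairing{}{H^1_\mathrm{D}(\Omega;\R^d)}{\barbigF(t)}{\baru(t)}$ converges by \eqref{ass-forces-n-bis} combined with \eqref{convu-new}. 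The damage term $\int_\Omega (z_0-\barz(t))\dd x$ converges to $\int_\Omega (z_0-z(t))\dd x$ by \eqref{convozr}, yielding the integrand $z(t)-z_0\leq 0$ as in \eqref{en-ineq-lsc} after an obvious monotonicity estimate.

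The dissipation integral $\int_0^{\bartau(t)}\int_\Omega \DD(\underz,\underth)e(\dot u_n)\psm e(\dot u_n)\dd x\dd s$ is the main analytic obstacle and is treated as in (i): along a suitable subsequence, \eqref{convptz} and \eqref{conv-theta-lp} give $(\underz,\underth)\to(z,\theta)$ a.e.\ in $(0,T)\times\Omega$; by \eqref{conti} and the uniform bounds \eqref{assCD-3}, the coefficient $\DD(\underz,\underth)$ converges strongly in every $L^p((0,T)\times\Omega)$ with $p<\infty$ via dominated convergence, while \eqref{convu} yields $e(\dot u_n)\rightharpoonup e(\dot u)$ weakly in $L^2((0,T)\times\Omega;\R^{d\times d})$; the Ioffe-type weak lower semicontinuity theorem for quadratic integrands with strongly convergent coefficients then produces $\liminf \geq \int_0^t\!\int_\Omega \DD(z,\theta)e(\dot u)\psm e(\dot u)\dd x\dd s$. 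The thermal coupling $-\int_0^{\bartau(t)}\!\int_\Omega \barth\,\BB\psm e(\dot u_n)\dd x\dd s$ converges directly (not merely as a $\liminf$) thanks to the strong $L^2((0,T)\times\Omega)$ convergence $\barth\to\theta$ from \eqref{conv-theta-lp} paired with the weak $L^2$ convergence of $e(\dot u_n)$.

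On the right-hand side, $\tfrac{\rho}{2}\int_\Omega |\dot u_0|^2 \dd x$ is constant and $\E_n(0,u_n^0,z_0)\to\E(0,u_0,z_0)$ using $u_n^0\to u_0$ in $H^1_\mathrm{D}(\Omega;\R^d)$ from \eqref{tilde-uzero} (which, by Lebesgue dominated convergence and continuity of $\CC$, controls $\tfrac12\int_\Omega \CC(z_0)e(u_n^0)\psm e(u_n^0)\dd x$), keeping $\calG(z_0,\nabla z_0)$ fixed, and assuming the approximating sequence has been chosen to also satisfy $\taun^{1/\gamma}\|u_n^0\|_{W^{1,\gamma}_\mathrm{D}(\Omega;\R^d)}\to 0$ so that the $\taun$-regularization vanishes at $t=0$. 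For the problematic loading term $-\int_0^{\bartau(t)}\pairing{}{H^1_\mathrm{D}(\Omega;\R^d)}{\dot\bigF_n}{\underu}\dd s$, in which both factors converge only weakly, I would apply the discrete integration-by-parts identity \eqref{discrete-by-part-F} to rewrite it as $\int_0^{\bartau(t)}\pairing{}{H^1_\mathrm{D}(\Omega;\R^d)}{\barbigF}{\dot u_n}\dd s -\pairing{}{H^1_\mathrm{D}(\Omega;\R^d)}{\barbigF(t)}{\baru(t)}+\pairing{}{H^1_\mathrm{D}(\Omega;\R^d)}{\bigF(0)}{u_n^0}$. The boundary terms cancel against parts of $\E_n(t,\baru(t),\barz(t))$ and $\E_n(0,u_n^0,z_0)$, while $\int_0^{\bartau(t)}\pairing{}{H^1_\mathrm{D}}{\barbigF}{\dot u_n}\dd s\to \int_0^t \pairing{}{H^1_\mathrm{D}}{\bigF}{\dot u}\dd s$ by the strong convergence \eqref{ass-forces-n} of $\barbigF$ in $L^2(0,T;H^1_\mathrm{D}^*)$ against the weak convergence of $\dot u_n$ in $L^2(0,T;H^1_\mathrm{D})$. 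A final continuous integration by parts converts $\int_0^t \pairing{}{H^1_\mathrm{D}}{\bigF}{\dot u}\dd s$ into $\pairing{}{H^1_\mathrm{D}}{\bigF(t)}{u(t)}-\pairing{}{H^1_\mathrm{D}}{\bigF(0)}{u_0}-\int_0^t\pairing{}{H^1_\mathrm{D}}{\dot\bigF}{u}\dd s$, which, after reabsorbing the boundary values into $\E(t,u(t),z(t))$ and $\E(0,u_0,z_0)$, yields precisely \eqref{en-ineq-lsc}. The main obstacle is the combined lower semicontinuity for the viscous dissipation with a coefficient depending on both $z$ and $\theta$, where only the a.e.\ convergence plus uniform boundedness of $\DD$ makes the Ioffe-type argument viable.
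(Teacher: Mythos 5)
Your proposal is correct and follows essentially the same route as the paper: the paper's proof is precisely a passage to the limit in the discrete mechanical energy inequality \eqref{disc-mech-energy-ineq}, using the convergences of Proposition \ref{Conv} together with \eqref{ass-forces-n-bis} and lower semicontinuity of the quadratic terms with state-dependent coefficients, exactly as you detail. Your discrete integration by parts to trade $-\int_0^{\bartau(t)}\pairing{}{H^1_\mathrm{D}(\Omega;\R^d)}{\dot{\bigF}_n}{\underu}\,\mathrm{d}s$ for the strong--weak pairing $\int_0^{\bartau(t)}\pairing{}{H^1_\mathrm{D}(\Omega;\R^d)}{\barbigF}{\dot u_n}\,\mathrm{d}s$ plus boundary terms is a legitimate (and fully rigorous) way to handle the one term the paper leaves implicit, and the remaining steps (Ioffe-type lower semicontinuity for the elastic and viscous terms, dropping the nonnegative $\taun$-regularization, strong$\times$weak convergence for the thermal coupling) match the paper's intended argument.
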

\begin{proof}
It is enough to pass to the limit in \eqref{disc-mech-energy-ineq}
taking into account
\eqref{ass-forces-n-bis}, 
\eqref{convu-new}, 
\eqref{ptcdu}, \eqref{convptz}, and  \eqref{convustrtheta}.
\end{proof}
%
\subsection{Limit passage in the semistability inequality}
\label{Semistab}
%
In order to carry out the passage from time-discrete to continuous in the semistability inequality
we follow the well-established method of circumventing a direct passage to the limit on the 
left- and on the  right-hand side of the semistability inequality \eqref{disc-semistab}. 
Instead, it is enough to prove a
limsup inequality for the difference, cf.\ also
\cite{MiRou06,MRS06}, using a so-called mutual recovery sequence.
This procedure, which allows  one  to  take advantage of some
cancelations in the regularizing terms for the internal variable
$\calG(z,\nabla z)$, has been already employed in
\cite{MiRou06,ThoMie09DNEM,Thom11QEBV} in problems concerned with
(fully) rate-indepen\-dent, partial, isotropic and unidirectional
damage,
 featuring  a $W^{1,q}(\Omega)$-gradient regularization, with $q{>}d$ in \cite{MiRou06},
any $q{>}1 $ in \cite{ThoMie09DNEM} as in the
present context, and $q{=}1$ in \cite{Thom11QEBV}.
 In what follows,
we verify that the recovery sequence constructed in \cite{ThoMie09DNEM},
where $\calG(z,\nabla z)=|\nabla z|^q$, is also suited in our setting
of semistability with a general gradient term.

More precisely, let us fix $t\in[0,T]$ in the energy functionals $\calE_n$ 
from \eqref{discr-mech-energ},
and a sequence
${(v_n,\zeta_n)}_n\subset H^1_{\mathrm D}(\Om;\R^d)\times \calZ$ such that
\begin{equation}\label{vnzetan}
\begin{split}
& v_n\wto v\quad\text{ weakly in }H^1_{\mathrm D}(\Om;\R^d)\,,
\quad \zeta_n\wto \zeta \quad\text{ weakly in }W^{1,q}(\Om)\,,
\\
& {\calE}_n(t,v_n,\zeta_n)\le {\calE}_n(t,v_n,\hat\zeta)
+\calR_1(\hat\zeta-\zeta_n)\quad\text{ for all }\hat\zeta\in\calZ\,,
\end{split}
\end{equation}
i.e.,
$\zeta_n$ is semistable for ${\calE}_n(t,v_n,\cdot)$.
Given $\tilde\zeta\in\calZ$
let the recovery sequence ${(\tilde \zeta_n)}_n\subset\calZ$ be defined by
\begin{equation}
\label{constr}
\begin{split}
&\tilde \zeta_n:=\min\big\{\zeta_n,\max\{(\tilde  \zeta-\delta_n,0)\}\big\}=
\begin{cases}
(\tilde  \zeta-\delta_n)&\text{on }A_n=\big\{0\leq(\tilde  \zeta-\delta_n)\leq\zeta_n\big\}\,,\\
\zeta_n&\text{on }B_n=\big\{\tilde  \zeta-\delta_n>\zeta_n\big\}\,,\\
0&\text{on }C_n=\big\{\tilde  \zeta-\delta_n<0\big\}\,,
\end{cases}
\\
&\text{where }\delta_n:=\|\zeta_n- \zeta\|_{L^q(\Omega)}^{1/q}\,.
\end{split}
\end{equation}
The sequence ${(\tilde \zeta_n)}_n$ was introduced in \cite{ThoMie09DNEM} where it was shown that
\begin{equation}
\label{recseqc}
\tilde \zeta_n\rightharpoonup\tilde \zeta\;\text{ in $W^{1,q}(\Omega)$\; for $q\in(1,\infty)$
from \eqref{G-growth} fixed.}
\end{equation}
Note however that strong convergence in $W^{1,q}(\Omega)$ cannot be expected,
since $\zeta_n\rightharpoonup \zeta$ weakly in
$W^{1,q}(\Omega)$, only. This makes it impossible to show directly that
$\calG(\tilde \zeta_n,\nabla\tilde \zeta_n)\to\calG(\tilde \zeta,\nabla\tilde \zeta)$, 
since this would require the
strong convergence of the gradients. Nevertheless the following result holds.

\begin{thm}
\label{MRS} 
Let the assumptions of Theorem~\ref{thm:main} be satisfied. 
Let $t\in[0,T]$ be fixed and
consider a sequence ${(v_n,\zeta_n)}_n\subset H^1_{\mathrm D}(\Om;\R^d)\times \calZ$ such that
\eqref{vnzetan} holds.
Given $\tilde\zeta\in\calZ$,
let ${(\tilde \zeta_n)}_n\subset\calZ$ as in
\eqref{constr}.
Then
\begin{equation}
\label{mrs}
0\le\limsup_{n\to\infty}\Big({\calE}_n(t,v_n,\tilde \zeta_n)-{\calE}_n(t,v_n, \zeta_n)
+\calR_1(\tilde \zeta_n- \zeta_n)\Big)
\leq \calE(t,v,\tilde \zeta)-\calE(t,v,\zeta)+\calR_1(\tilde \zeta-\zeta)\,.
\end{equation}
Therefore the limit $\zeta$ is semistable for $\calE(t,v,\cdot)$.
\end{thm}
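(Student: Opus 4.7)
The lower bound in \eqref{mrs} is immediate from the semistability of $\zeta_n$ for $\calE_n(t,v_n,\cdot)$ in \eqref{vnzetan} tested with the competitor $\hat\zeta = \tilde\zeta_n \in \calZ$; the semistability of $\zeta$ for $\calE(t,v,\cdot)$ then follows once the limsup inequality is verified. For the upper bound, I may assume $\tilde\zeta \le \zeta$ a.e.\ in $\Omega$, otherwise $\calR_1(\tilde\zeta - \zeta) = +\infty$ and the claim is trivial. Under this assumption, $\tilde\zeta_n \le \zeta_n$ a.e.\ by construction \eqref{constr}, hence $\calR_1(\tilde\zeta_n - \zeta_n) = \|\zeta_n - \tilde\zeta_n\|_{L^1(\Omega)} \to \calR_1(\tilde\zeta - \zeta)$ using the strong $L^r$-convergence (every $r<\infty$) of both $\zeta_n \to \zeta$ and $\tilde\zeta_n \to \tilde\zeta$, which follows from weak $W^{1,q}$-convergence combined with \eqref{recseqc}, Rellich's theorem, and the uniform $L^\infty$-bound $\zeta_n,\tilde\zeta_n \in [0,1]$. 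Since the $\gamma$-regularizing term and the loading term in \eqref{discr-mech-energ} are $z$-independent, the energy difference reduces to an elastic and a gradient contribution, which I treat separately.

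For the elastic part, let $M_n := \CC(\zeta_n) - \CC(\tilde\zeta_n)$, which is positive semidefinite by \eqref{mono}, and $M := \CC(\zeta) - \CC(\tilde\zeta)$. Continuity of $\CC$ together with the bound \eqref{conti-c} and the $L^r$-convergence of $\tilde\zeta_n, \zeta_n$ yields $M_n \to M$ strongly in $L^2(\Omega)$ by dominated convergence. Splitting
\[
\int_\Omega M_n e(v_n){:}e(v_n)\,\mathrm{d}x = \int_\Omega M_n(e(v_n){-}e(v)){:}(e(v_n){-}e(v))\,\mathrm{d}x + 2\!\int_\Omega M_n e(v_n){:}e(v)\,\mathrm{d}x - \int_\Omega M_n e(v){:}e(v)\,\mathrm{d}x,
\]
discarding the nonnegative first summand, and combining $e(v_n) \rightharpoonup e(v)$ weakly in $L^2$ with $M_n \to M$ strongly in $L^2$, one obtains $\liminf_n \int M_n e(v_n){:}e(v_n) \ge \int M e(v){:}e(v)$, i.e.\ exactly the desired limsup estimate for the elastic part.

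For the gradient term I exploit the cancellation built into \eqref{constr}: on $B_n$ the integrands $G(\tilde\zeta_n, \nabla\tilde\zeta_n)$ and $G(\zeta_n, \nabla\zeta_n)$ coincide, on $A_n$ one has $\nabla\tilde\zeta_n = \nabla\tilde\zeta$ (independent of $n$), and on $C_n$ we have $G(\tilde\zeta_n, \nabla\tilde\zeta_n) = G(0,0) = 0$ by \eqref{Gcont}, yielding
\[
\calG(\tilde\zeta_n, \nabla\tilde\zeta_n) - \calG(\zeta_n, \nabla\zeta_n) = \int_{A_n}\bigl[G(\tilde\zeta{-}\delta_n, \nabla\tilde\zeta) - G(\zeta_n, \nabla\zeta_n)\bigr]\,\mathrm{d}x - \int_{C_n} G(\zeta_n, \nabla\zeta_n)\,\mathrm{d}x.
\]
Since $G \ge 0$, the last term is dropped in taking the limsup. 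For the first integral, $\chi_{A_n} G(\tilde\zeta{-}\delta_n, \nabla\tilde\zeta) \to G(\tilde\zeta, \nabla\tilde\zeta)$ pointwise a.e.\ on $\Omega$ (using $\chi_{A_n} \to 1$ on $\{\tilde\zeta > 0\}$ and $\nabla\tilde\zeta = 0$ on $\{\tilde\zeta = 0\}$), with an $L^1$-dominant supplied by \eqref{G-growth}, so DCT gives convergence to $\calG(\tilde\zeta, \nabla\tilde\zeta)$. The liminf of $\int_{A_n} G(\zeta_n, \nabla\zeta_n)$ is bounded below by $\calG(\zeta, \nabla\zeta)$ via the weak $W^{1,q}$-lower semicontinuity of $\calG$ recalled in Remark \ref{LSCG}, provided the error term $\int_{B_n} G(\zeta_n, \nabla\zeta_n)$ is negligible: this is precisely the purpose of the rate $\delta_n = \|\zeta_n - \zeta\|_{L^q}^{1/q}$ in \eqref{constr}, which by Chebyshev's inequality (and $\tilde\zeta \le \zeta$) forces $|B_n| \le \delta_n^{q(q-1)} \to 0$, and combined with the refined lsc/truncation analysis of \cite{ThoMie09DNEM} absorbs the contribution on $B_n$. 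Summing the elastic, gradient, and $\calR_1$-estimates gives \eqref{mrs}.

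The main obstacle is precisely this last point: the weak $W^{1,q}$-convergence of $\zeta_n$ provides no equi-integrability for $|\nabla\zeta_n|^q$, so the contribution on the shrinking set $B_n$ cannot be controlled by measure alone; one must exploit both the pointwise cancellation on $B_n$ engineered in $\tilde\zeta_n$ and the weak lower semicontinuity of $\calG$. The elastic part is by contrast straightforward once the monotonicity \eqref{mono} is invoked, and the extension from the pure-power case of \cite{ThoMie09DNEM} to the general convex integrand $G$ here is routine thanks to \eqref{Gcont}--\eqref{G-growth}.
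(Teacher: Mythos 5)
Your handling of the trivial case, of the $\calR_1$-term, and of the elastic term is correct; in particular, for the elastic part your expand-the-square argument, which only uses positive semidefiniteness of $\CC(\zeta_n)-\CC(\tilde\zeta_n)$ from \eqref{mono}, strong convergence of the coefficients and weak $L^2$-convergence of $e(v_n)$, is a valid and slightly more elementary alternative to the paper's route via the lower semicontinuity of an auxiliary integral functional $\calC$. The identification of the lower bound in \eqref{mrs} with the discrete semistability tested by $\tilde\zeta_n$ is also fine.

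The gradient term, however, contains a genuine gap. After the exact cancellation on $B_n$ you drop the term $-\int_{C_n}G(\zeta_n,\nabla\zeta_n)\dd x$ and are then forced to prove $\liminf_n\int_{A_n}G(\zeta_n,\nabla\zeta_n)\dd x\ge\calG(\zeta,\nabla\zeta)$. This is false in general: $A_n$ omits not only $B_n$ (whose measure does vanish by the choice of $\delta_n$) but also $C_n=\{\tilde\zeta<\delta_n\}$, whose measure converges to $\calL^d(\{\tilde\zeta=0\})$ and need not be small. If $\tilde\zeta$ vanishes on a subdomain where $G(\zeta,\nabla\zeta)>0$ (e.g.\ $W(\zeta)>0$ there in the Modica--Mortola example), the best your route can give is $\limsup_n\big(\calG(\tilde\zeta_n,\nabla\tilde\zeta_n)-\calG(\zeta_n,\nabla\zeta_n)\big)\le\calG(\tilde\zeta,\nabla\tilde\zeta)-\int_{\{\tilde\zeta>0\}}G(\zeta,\nabla\zeta)\dd x$, which is strictly weaker than the required $\calG(\tilde\zeta,\nabla\tilde\zeta)-\calG(\zeta,\nabla\zeta)$; with an enlarged right-hand side in \eqref{mrs} the semistability of $\zeta$ no longer follows. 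The extreme case $\tilde\zeta\equiv0$ makes the failure transparent: then $A_n=\emptyset$, your estimate yields $\le 0$, while the target is $-\calG(\zeta,\nabla\zeta)$. Your diagnosis that measure-smallness of $B_n$ alone cannot control $\int_{B_n}|\nabla\zeta_n|^q$ without equi-integrability is correct, but the cure is not an extra absorption argument on $B_n$: it is to \emph{keep} the negative term on all of $A_n\cup C_n$, rewrite it (using $G(0,0)=0$) as $-\calG(\calX_{A_n\cup C_n}\zeta_n,\calX_{A_n\cup C_n}\nabla\zeta_n)$, and apply the lower semicontinuity of $\calG$ to this truncated sequence, which converges correctly precisely because only the vanishing set $B_n$ is excised: $\calX_{A_n\cup C_n}\zeta_n\to\zeta$ strongly in $L^q(\Omega)$ and $\calX_{A_n\cup C_n}\nabla\zeta_n\wto\nabla\zeta$ weakly in $L^q(\Omega;\R^d)$, since $\|\calX_{B_n}\varphi\|_{L^{q'}(\Omega)}\to0$ for every $\varphi\in L^{q'}(\Omega)$. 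This is the paper's argument (and the content of the truncation device of \cite{ThoMie09DNEM}); applied over $A_n\cup C_n$ rather than $A_n$ alone, it needs no equi-integrability and recovers the full $-\calG(\zeta,\nabla\zeta)$. A further, minor, point: $\calX_{A_n}\to1$ holds on $\{\tilde\zeta>0\}$ only in measure (up to subsequences a.e.), not pointwise as stated, but this does not affect the DCT step for the $\tilde\zeta$-part.
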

\begin{proof}
First of all note that, if $\tilde\zeta\in\calZ$ does not satisfy
$0\le\tilde\zeta\le\zeta$, then \eqref{mrs} trivially holds, since
in this case $\calR_1(\tilde \zeta-\zeta)=+\infty$.

Assume now $0\le\tilde\zeta\le\zeta$ for a.e.\ $x\in\Om$.
Let us estimate the left-hand side of \eqref{mrs} as follows:
\begin{align}
\label{esthelp}
&\limsup_{n\to\infty}
\Big({\calE}_n(t,v_n,\tilde \zeta_n)-{\calE}_n(t,v_n, \zeta_n)
+\calR_1(\tilde \zeta_n- \zeta_n)\Big)\\
\nonumber
&\leq
\limsup_{n\to\infty}\int_\Omega(\CC(\tilde \zeta_n)-\CC( \zeta_n))e(v_n):e(v_n)\,\mathrm{d}x
+\limsup_{n\to\infty}\big(\calG(\tilde \zeta_n,\nabla \tilde \zeta_n)-\calG( \zeta_n,\nabla  \zeta_n)\big)
+\limsup_{n\to\infty}\calR_1(\tilde \zeta_n- \zeta_n)
\end{align}
and then treat each of the terms on the right-hand side of \eqref{esthelp} separately.
Since $ \zeta_n\rightharpoonup \zeta$ in $W^{1,q}(\Omega)$, we may choose a (not relabeled) subsequence
that converges pointwise a.e.\ in $\Omega$.
\par
{\bf Estimation of $\limsup_{n\to\infty}
\big(\calG(\tilde \zeta_n,\nabla\tilde \zeta_n)-\calG( \zeta_n,\nabla \zeta_n)\big)$: }
Note that $G(\tilde \zeta_n,\nabla\tilde \zeta_n)=G( \zeta_n,\nabla \zeta_n)$ on $B_n$.
If $\| \zeta_n-\zeta\|_{L^q(\Omega)}>0$, by Markov's inequality
\begin{equation*}
\calL^d(B_n)\leq\calL^d([\delta_n\leq| \zeta_n-\zeta|])\le \tfrac{1}{\delta_n}\int_\Omega| \zeta_n-\zeta|\dd x
\leq \tfrac{1}{\delta_n}\| \zeta_n-\zeta\|_{L^q(\Omega)}\to0 \,,
\end{equation*}
with $\delta_n$ from \eqref{constr},
while for $\| \zeta_n-\zeta\|_{L^q(\Omega)}=0$
it is indeed $\calL^d(B_n)=0$,
thus
\begin{equation}
\label{convsets}
\calL^d(A_n\cup C_n)\to\calL^d(\Omega)\,.
\end{equation}
In what follows, $\calX_D$ will denote the characteristic function of  a  set $D$.
By \eqref{Gcont}, \eqref{G-growth} and \eqref{constr}, we deduce
\begin{subequations}
\begin{align}
\nonumber 
&\limsup_{n\to\infty}\big(\calG(\tilde \zeta_n,\nabla\tilde \zeta_n)-\calG( \zeta_n,\nabla \zeta_n)\big)\\
\nonumber \displaybreak[0]
&=\limsup_{n\to\infty}\int_{A_n}G((\tilde \zeta-\delta_n),\nabla\tilde \zeta)\,\mathrm{d}x
+\int_{C_n}G(0,0)\,\mathrm{d}x
-\int_{A_n\cup C_n}G( \zeta_n,\nabla \zeta_n)\,\mathrm{d}x\\
\nonumber \displaybreak[0]
&\leq\limsup_{n\to\infty}\Big(
\int_{\Omega}G(\calX_{A_n}(\tilde \zeta-\delta_n),\calX_{A_n}\nabla\tilde \zeta)\,\mathrm{d}x
+\int_{\Omega}G( 0,\calX_{C_n}\nabla\tilde \zeta)\,\mathrm{d}x
-\int_{\Omega}G(\calX_{A_n\cup C_n} \zeta_n,\calX_{A_n\cup C_n}\nabla \zeta_n)
\,\mathrm{d}x\Big)\\
\nonumber \displaybreak[0]
&=\limsup_{n\to\infty}\Big(
\int_{\Omega}G(\calX_{A_n\cup C_n}(\tilde \zeta_n),
\calX_{A_n\cup C_n}\nabla\tilde \zeta)\,\mathrm{d}x
-\int_{\Omega}G(\calX_{A_n\cup C_n} \zeta_n,\calX_{A_n\cup C_n}\nabla \zeta_n)
\,\mathrm{d}x\Big)\\
\label{est1} \displaybreak[0]
&\leq\calG(\tilde \zeta,\nabla\tilde \zeta)
-\liminf_{n\to\infty}\calG(\calX_{A_n\cup C_n} \zeta_n,\calX_{A_n\cup C_n}\nabla \zeta_n)\\
\label{est2}
&\leq\calG(\tilde \zeta,\nabla\tilde \zeta)-\calG(\zeta,\nabla \zeta)\,,
\end{align}
\end{subequations}
 where in the second integral term in the third line we have used the obvious identity $\calX_{C_n} 0=0$.
To obtain \eqref{est1} we have used the dominated convergence theorem, while in order
to prove \eqref{est2} we employed the lower semicontinuity of
$\calG:L^q(\Omega)\times L^q(\Omega;\R^d)\to\R\cup\{\infty\}$,  since,
by \eqref{recseqc} and \eqref{convsets},
we have $\calX_{A_n\cup C_n} \zeta_n\to \zeta$ strongly in $L^q(\Omega)$ and
$\calX_{A_n\cup C_n}\nabla \zeta_n\rightharpoonup\nabla \zeta$ weakly in $L^q(\Omega;\R^d)$.
\par
{\bf Estimation of the remaining terms in \eqref{esthelp}: } Since construction \eqref{constr}
ensures  $\tilde \zeta_n \le  \zeta_n$  for every $n\in\N$, as well as
$\tilde \zeta_n\to\tilde \zeta$
in $L^q(\Omega)$, due to $ \zeta_n\to \zeta$ in $L^q(\Omega)$, we immediately conclude that
$\calR_1(\tilde \zeta_n- \zeta_n)\to\calR_1(\tilde \zeta- \zeta)$.
\par
We now estimate the difference of the quadratic terms in the mechanical energy. As $\tilde \zeta_n\leq \zeta_n$, by the monotonicity assumption~\eqref{mono} we have that
$(\CC(\tilde \zeta_n)-\CC( \zeta_n))e(v_n):e(v_n)\leq 0$. 
Since
both $ \zeta_n\to \zeta$ and $\tilde \zeta_n\to\tilde \zeta$ in $L^q(\Omega)$, 
the Lipschitz-continuity of $\CC$, cf.\ \eqref{conti}, implies that
$\CC(\tilde \zeta_n)-\CC( \zeta_n)\to(\CC( \tilde \zeta)-\CC(\zeta))$ in $L^q(\Omega;\R^{d \times d \times d \times d}_{\mathrm{sym}})$.
Let us consider the auxiliary functional
$\calC:L^q(\Omega)\times L^q(\Omega)\times L^2(\Omega;\R^{d \times d})\to\R$ defined by
\begin{equation*}
\calC(\zeta,\tilde \zeta,e):=\int_\Omega (\CC(\zeta(x))-\CC(\min\{\zeta(x),\tilde\zeta(x)\})) e(x):e(x)\,\mathrm{d}x\,.
\end{equation*}
By e.g.\ \cite[Theorem 7.5, p.\ 492]{FoLeo07} the functional $\calC$ is lower semicontinuous with respect 
to the strong convergence in $L^q(\Omega)\times L^q(\Omega)$ and  the
weak convergence in $L^2(\Omega;\R^{d \times d})$. 
%
Thus, the first term on the right-hand side of \eqref{esthelp} can be
rewritten and estimated  as follows, using \eqref{ap-u-lin-inter} and the lower semicontinuity of $\calC$,
\begin{align*}
\limsup_{n\to\infty}\int_\Omega(\CC(\tilde \zeta_n)-\CC( \zeta_n))e(v_n):e(v_n)\,\mathrm{d}x
&
\leq\int_\Omega(\CC(\tilde \zeta)-\CC( \zeta))e(v):e(v)\,\mathrm{d}x\,.
\end{align*}
\par
Combining the above established estimates for the three terms on the right-hand side of \eqref{esthelp}
shows that condition \eqref{mrs} is satisfied. 
\end{proof}
%
\subsection{Energy equalities and limit passage in the heat equation}
\label{EEHeat}
%
We now show that the limit triple $(u,z,\theta)$ satisfies the
mechanical energy  equality  \eqref{mech-energy-ineq}. The
inequality ($\le$)  has been proven in Lemma \ref{lemma:5.5}. The
opposite inequality is found by approximation with Riemann sums, as
common in existence proofs of rate-independent and rate-dependent
evolutions, see e.g.~\cite{DFT05}.
\begin{prop}[Mechanical energy equality] 
Let the assumptions of Theorem~\ref{thm:main} be satisfied,
let $(u,z,\theta)$ be  a triple given by Proposition~\ref{Conv}, and let $t\in[0,T]$.
Then \eqref{mech-energy-ineq} holds.
\label{MechEE}
\end{prop}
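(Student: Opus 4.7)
The $(\le)$ direction of \eqref{mech-energy-ineq} is precisely \eqref{en-ineq-lsc} from Lemma \ref{lemma:5.5}, hence it suffices to establish the converse $(\ge)$. The plan, classical in the spirit of \cite{DFT05}, is to test the continuous momentum equation with $\dot u$ and combine the resulting partial identity with a semistability-driven discrete chain rule along a refining partition of $[0,t]$.

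First, Remark \ref{rmk:more-regularity} allows me to insert $v=\dot u$ in \eqref{e:weak-momentum-variational}; using the Gelfand-triple identity \eqref{gelfand} on the inertial term yields
\begin{equation*}
\tfrac\rho2\integ{\Omega}{\mod{\dot u(t)}^2}{x}+\integlin{0}{t}{\!\integ{\Omega}{\bigl(\DD(z,\theta)e(\dot u)-\theta\,\BB\bigr)\psm e(\dot u)}{x}}{s}+\integlin{0}{t}{\!\integ{\Omega}{\CC(z)e(u)\psm e(\dot u)}{x}}{s}=\tfrac\rho2\integ{\Omega}{\mod{\dot u_0}^2}{x}+\integlin{0}{t}{\pairing{}{H^1_\mathrm{D}(\Omega;\R^d)}{\bigF}{\dot u}}{s},
\end{equation*}
so the task reduces to bounding $\integlin{0}{t}{\!\integ{\Omega}{\CC(z)e(u)\psm e(\dot u)}{x}}{s}$ from \emph{above} by a telescoping expression in $\E$ and $\calR_1$.

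For this, I fix a partition $0=s_0<\dots<s_N=t$ of mesh $\eta$, whose nodes lie in the (cocountable) continuity set of $z\in\BV([0,T];L^1(\Omega))$, and apply the semistability \eqref{semistab-general} at time $s_k$ with admissible competitor $\tilde z:=z(s_{k+1})\le z(s_k)$. Since $\calR_1(z(s_{k+1})-z(s_k))=\integ{\Omega}{(z(s_k)-z(s_{k+1}))}{x}$ and the loading terms depending only on $u(s_k)$ cancel, a summation in $k$ produces the lower bound
\begin{equation*}
\sum_{k=0}^{N-1}\tfrac12\integ{\Omega}{[\CC(z(s_{k+1}))-\CC(z(s_k))]e(u(s_k))\psm e(u(s_k))}{x}\ge\calG(z_0,\nabla z_0)-\calG(z(t),\nabla z(t))-\integ{\Omega}{(z_0-z(t))}{x}
\end{equation*}
on the ``damage-increment'' sum. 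Combined with the algebraic telescope
\begin{equation*}
\tfrac12\integ{\Omega}{[\CC(z(t))e(u(t))\psm e(u(t))-\CC(z_0)e(u_0)\psm e(u_0)]}{x}=\sum_{k}\tfrac12\integ{\Omega}{[\CC(z(s_{k+1}))-\CC(z(s_k))]e(u(s_k))\psm e(u(s_k))}{x}+\sum_{k}\tfrac12\integ{\Omega}{\CC(z(s_{k+1}))[e(u(s_{k+1}))\psm e(u(s_{k+1}))-e(u(s_k))\psm e(u(s_k))]}{x},
\end{equation*}
this isolates the complementary ``displacement-increment'' Riemann sum, which as $\eta\downarrow 0$ converges to $\integlin{0}{t}{\!\integ{\Omega}{\CC(z)e(u)\psm e(\dot u)}{x}}{s}$: indeed $u\in H^1(0,T;H^1_\mathrm{D}(\Omega;\R^d))$ gives $e(u)\psm e(u)\in W^{1,1}(0,T;L^1(\Omega))$ with derivative $2\,e(u)\psm e(\dot u)$, while the $L^1$-continuity of $z$ at each $s_k$, combined with $z\in L^\infty((0,T)\times\Omega)$ and the Lipschitz continuity \eqref{conti} of $\CC$, upgrades to $\CC(z(s_k))\to\CC(z(s))$ in every $L^r(\Omega)$, $r<\infty$. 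Passing to the limit and recasting in terms of $\E$ yields
\begin{equation*}
\integlin{0}{t}{\!\integ{\Omega}{\CC(z)e(u)\psm e(\dot u)}{x}}{s}\le\E(t,u(t),z(t))-\E(0,u_0,z_0)+\pairing{}{H^1_\mathrm{D}(\Omega;\R^d)}{\bigF(t)}{u(t)}-\pairing{}{H^1_\mathrm{D}(\Omega;\R^d)}{\bigF(0)}{u_0}+\integ{\Omega}{(z_0-z(t))}{x}.
\end{equation*}

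Substituting this upper bound into the partial identity of the first step and using the integration by parts $\integlin{0}{t}{\pairing{}{H^1_\mathrm{D}(\Omega;\R^d)}{\bigF}{\dot u}}{s}=\pairing{}{H^1_\mathrm{D}(\Omega;\R^d)}{\bigF(t)}{u(t)}-\pairing{}{H^1_\mathrm{D}(\Omega;\R^d)}{\bigF(0)}{u_0}-\integlin{0}{t}{\pairing{}{H^1_\mathrm{D}(\Omega;\R^d)}{\dot\bigF}{u}}{s}$ together with $\partial_t\E(s,u,z)=-\pairing{}{H^1_\mathrm{D}(\Omega;\R^d)}{\dot\bigF(s)}{u}$, all work-of-loading contributions cancel and I recover precisely the reverse of \eqref{en-ineq-lsc}; combined with Lemma \ref{lemma:5.5} this gives \eqref{mech-energy-ineq}. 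The main obstacle is the Riemann-sum step, namely the passage to the limit in the displacement-increment sum despite $z$ being only $\BV$ in time; this is handled by anchoring the partition to the cocountable continuity set of $z$ and exploiting the uniform $L^\infty$-bound on $z$ together with the Lipschitz continuity of $\CC$.
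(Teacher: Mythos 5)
Your proof is correct and follows essentially the same route as the paper's: the ($\le$) half from Lemma \ref{lemma:5.5}, testing the momentum balance by $\dot u$ via Remark \ref{rmk:more-regularity} and \eqref{gelfand}, and controlling $\int_0^t\int_\Omega\CC(z)e(u)\psm e(\dot u)\,\mathrm{d}x\,\mathrm{d}s$ through the semistability applied along a refining partition with competitor $z(s_{k+1})\le z(s_k)$, which is exactly the mechanism behind \eqref{140522}--\eqref{e0-le-et}. The only real difference is the justification of the Riemann-sum limit: the paper selects special partitions satisfying \eqref{140522} via \cite{Hahn14} (cf.\ \cite{Roub10TRIP}), whereas your dominated-convergence argument---based on the at most countable $L^1(\Omega)$-jump set of $z$, the uniform bound and Lipschitz continuity of $\CC$, and $e(u)\psm e(\dot u)\in L^1((0,T)\times\Omega)$---works for any vanishing-mesh sequence; note only that the continuity actually needed is at a.e.\ intermediate time $s\notin J$ (since $z(s_{k+1})\to z(s_+)=z(s)$ there), not at the partition nodes, so anchoring the nodes in the continuity set of $z$ is harmless but not what makes the step work.
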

\begin{proof}
 We fix  a sequence of subdivisions ${(s_n^k)}_{0\le k\le k_n}$ of
the interval $[0,t]$, with
$0=s_n^0<s_n^1<\dots<s_n^{k_n-1}<s_n^{k_n}=t$,
$\lim_n\max_k(s_n^k-s_n^{k-1})=0$, and
\be
\label{140522}
\mod{\sum_{k=1}^{k_n} \integlin{s\km}{s\kk}{\!\integ{\Om}{\left[\CC(z(s\kk))
{-}\CC(z(s))\right]e(u(s)):e(\dot u(s))}{x}}{s} }\to0 \,.
\ee
The existence of such a sequence is guaranteed by \cite{Hahn14},
see also \cite[Proposition 4.3, Step 7]{Roub10TRIP}.
Taking $z(s_n^k)$ as test function in  the time-continuous
semistability inequality  \eqref{semistab-general} at time $s\km$ we
get
\begin{align*} \displaybreak[0]
\E(s\km,u(s\km),z(s\km)) &\le \ \E(s\km,u(s\km),z(s\kk))+\integ{\Om}{(z(s\km){-}z(s\kk))}{x} \\
&= \E(s\kk,u(s\kk),z(s\kk))+\integ{\Om}{(z(s\km){-}z(s\kk))}{x}
- \integlin{s\km}{s\kk}{\partial_t\E(s,u(s),z(s))}{s} \\
& \phantom{=}+ \integlin{s\km}{s\kk}{\pairing{}{H^1_\mathrm{D}(\Om;\R^d)}{ \bigF(s)}{\dot u(s)}}{s}
- \integlin{s\km}{s\kk}{\!\integ{\Om}{\CC(z(s\kk))e(u(s)):e(\dot u(s))}{x}}{s} \,.
\end{align*}
%
%
Next we sum up the previous inequality over $k=1,\dots,k_n$ and we
pass to the limit in $n$ in the last term thanks to \eqref{140522},
obtaining
\begin{equation}\label{e0-le-et}
\begin{aligned}
\E(0,u_0,z_0)&\le
\E(t,u(t),z(t))+\integ{\Om}{(z_0{-}z(t))}{x}-\!\integlin{0}{t}{\!\partial_t\E(s,u(s),z(s))}{s} \\
&\phantom{=}+\integlin{0}{t}{\!\!\!\pairing{}{H^1_{\mathrm D}(\Om;\R^d)}{ \bigF(s)}{\dot u(s)}}{s}
-\integlin{0}{t}{\!\integ{\Om}{\CC(z(s))e(u(s)):e(\dot u(s))}{x}}{s}
\,.
\end{aligned}
\end{equation}
Further, thanks to Remark \ref{rmk:more-regularity} we can test
\eqref{e:weak-momentum-variational} by $\dot u$ and get
\begin{equation}\label{testdotu}
\begin{aligned}
&\tfrac{\rho}{2} \|\dot u(t)\|^2_{L^2(\Omega;\R^d)}
+\integlin{0}{t}{\!\integ{\Omega}{\left( \DD(z,\theta)e(\dot u)
+  \CC(z)e(u)  -\theta\,\BB \right) \psm e(\dot u)}{x}}{s}
\\
&= \tfrac{\rho}{2}\|\dot u_0\|^2_{L^2(\Omega;\R^d)}
+\integlin{0}{t} {\pairing{}{H^1_{\mathrm D}(\Omega;\R^d)}{\bigF}{\dot u}}{s}\,,
\end{aligned}
\end{equation}
where we applied the by-part integration formula \eqref{gelfand}, as allowed by \cite[Lemma 7.3]{Roub05NPDE}.
Summing up \eqref{testdotu} with \eqref{e0-le-et} we obtain
\begin{align*}
\E(0,u_0,z_0)&\le \E(t,u(t),z(t))+
\tfrac\rho2{\integ{\Om}{\mod{\dot u(t)}^2}{x}}
+\integ{\Om}{(z_0-z(t))}{x}
-\integlin{0}{t}{\partial_t\E(s,u(s),z(s))}{s}\\
& \phantom{\le} - \tfrac\rho2{\integ{\Om}{\mod{\dot u_0}^2}{x}}
+\integlin{0}{t}{\!\integ{\Omega}{\left( \DD(z(s),\theta(s))e(\dot u(s))
-\theta(s)\,\BB \right) \psm e(\dot u(s))}{x}}{s}  \,.
\end{align*}
Combining this estimate with the reverse inequality
\eqref{en-ineq-lsc}  concludes the proof of
\eqref{mech-energy-ineq}.
\end{proof}
\par
In order to prove a stronger convergence of the displacements we shall repeatedly make use of the following result. 
Given two constants $C_1,C_2$ with $0<C_1\le C_2$, let  ${\mathcal T}_{C_1,C_2}$ denote the class of  tensors $\A\in \R^{d \times d \times d \times d}$ that are symmetric, i.e.,
\begin{equation*}
\A_{ijkl}=\A_{jikl}=\A_{ijlk}=\A_{klij} \,,
\end{equation*}
positive definite and bounded:
\begin{equation} 
\label{appendixconti-c} 
C_1 \mod{A}^2 \le \A\,A: A\le C_2 \mod{A}^2\quad\text{for every } A\in \Rsym\,.
\end{equation}
%
%
\begin{lemma}\label{appendix}
Let ${\mathcal K}_n$ be the functional defined by
\begin{equation*}
{\mathcal K}_n(e):=\int_0^T\int_\Omega \A_n(t,x)e(t,x):e(t,x)\, \d x\,\d t\quad\text{for every }e\in L^2((0,T)\times \Omega;\R^{d\times d})\,,
\end{equation*}
where $\A_n\in L^\infty((0,T)\times\Omega;{\mathcal T}_{C_1,C_2})$
are such that
\begin{subequations}
\begin{eqnarray}\label{1902}
\label{1902a} & \A_n(t,x)\to \A_\infty(t,x)\quad\text{for a.e. $t\in(0,T)$ and a.e. $x\in\Omega$}\,, \\
\label{1902b} & e_n\wto e_\infty\quad\text{weakly in }L^2((0,T)\times\Omega;\R^{d\times d})\,,\\
\label{1902c} &
\limsup_{n\to\infty}{\mathcal K}_n(e_n)\le{\mathcal K}_\infty(e_\infty)\,,
\end{eqnarray}
\end{subequations}
and  ${\mathcal K}_\infty$ is defined by
$$
{\mathcal K}_\infty(e):=\int_0^T\int_\Omega \A_\infty(t,x)e(t,x):e(t,x)\, \d x\, \d t\quad\text{for every }e\in L^2((0,T)\times \Omega;\R^{d\times d})\,.
$$
Then, $\lim_{n\to\infty} {\mathcal K}_n(e_n)  =  {\mathcal K}_\infty(e_\infty)$ and 
\begin{equation}\label{strongen}
e_n\to e_\infty\quad\text{ strongly in }L^2((0,T)\times\Omega;\R^{d\times d})\,.
\end{equation}
\end{lemma}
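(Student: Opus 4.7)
The plan is to reduce the strong convergence to showing that the ``excess energy'' $\mathcal{K}_n(e_n-e_\infty)$ vanishes in the limit, by expanding the quadratic form and exploiting the full symmetry together with the pointwise convergence of $\A_n$ and the uniform bound $|\A_n|\le C_2$ coming from \eqref{appendixconti-c}.

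First I would record the algebraic identity that, thanks to the symmetry $(\A_n)_{ijkl}=(\A_n)_{klij}$,
\begin{equation*}
\mathcal{K}_n(e_n-e_\infty)=\mathcal{K}_n(e_n)-2\int_0^T\!\!\int_\Omega \A_n e_n:e_\infty\,\d x\,\d t+\mathcal{K}_n(e_\infty).
\end{equation*}
The key preliminary ingredient is that $\A_n e_\infty\to \A_\infty e_\infty$ strongly in $L^2((0,T)\times\Omega;\R^{d\times d})$: indeed, by \eqref{1902a} this convergence holds pointwise a.e., and $|\A_n e_\infty|\le C_2|e_\infty|\in L^2$ provides a fixed $L^2$-majorant, so the dominated convergence theorem applies. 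Combined with the weak convergence \eqref{1902b}, this yields
\begin{equation*}
\int_0^T\!\!\int_\Omega \A_n e_n:e_\infty\,\d x\,\d t=\int_0^T\!\!\int_\Omega e_n:(\A_n e_\infty)\,\d x\,\d t\;\longrightarrow\;\mathcal{K}_\infty(e_\infty),
\end{equation*}
and, by the same dominated convergence argument applied to $\A_n e_\infty:e_\infty$, also $\mathcal{K}_n(e_\infty)\to \mathcal{K}_\infty(e_\infty)$.

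Next I would use the coercivity in \eqref{appendixconti-c} to write
\begin{equation*}
0\le C_1\|e_n-e_\infty\|_{L^2((0,T)\times\Omega;\R^{d\times d})}^2\le \mathcal{K}_n(e_n-e_\infty)=\mathcal{K}_n(e_n)-2\!\int_0^T\!\!\int_\Omega \A_n e_n:e_\infty+\mathcal{K}_n(e_\infty).
\end{equation*}
Taking the $\liminf$ of the right-hand side and using the two convergences just established gives $\liminf_n \mathcal{K}_n(e_n)\ge \mathcal{K}_\infty(e_\infty)$, which together with the hypothesis \eqref{1902c} proves that $\mathcal{K}_n(e_n)\to\mathcal{K}_\infty(e_\infty)$. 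Plugging this back into the displayed inequality shows $\limsup_n\mathcal{K}_n(e_n-e_\infty)\le 0$, whence by coercivity $\|e_n-e_\infty\|_{L^2}\to 0$, i.e.\ \eqref{strongen}.

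I do not foresee any serious obstacle: the only point to be handled with some care is the strong $L^2$-convergence $\A_n e_\infty\to\A_\infty e_\infty$, which is indispensable to pass to the limit in the mixed term $\int \A_n e_n:e_\infty$ against the merely weakly converging $e_n$; but this is a direct application of the dominated convergence theorem thanks to the uniform bound in \eqref{appendixconti-c}. The rest is just a matter of combining coercivity with the parallelogram-type expansion of the quadratic form.
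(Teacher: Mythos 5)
Your proposal is correct and follows essentially the same route as the paper's proof: the same quadratic expansion of $\mathcal{K}_n(e_n-e_\infty)$, the dominated-convergence/weak-strong argument for the mixed term and for $\mathcal{K}_n(e_\infty)$, and the coercivity bound $\mathcal{K}_n(e_n-e_\infty)\ge C_1\|e_n-e_\infty\|_{L^2}^2$ to conclude \eqref{strongen}. Your extra step deriving $\liminf_n\mathcal{K}_n(e_n)\ge\mathcal{K}_\infty(e_\infty)$ to obtain the energy convergence explicitly is a welcome detail that the paper leaves implicit, but it is the same argument.
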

\begin{proof}
It is enough to observe that under the above hypotheses  $\A_\infty\in L^\infty((0,T)\times\Omega;{\mathcal T}_{C_1,C_2})$  and
\begin{equation*}
\begin{split}
\displaystyle {\mathcal K}_n(e_n-e_\infty)&=\int_0^T\int_\Omega \A_n(t,x)(e_n(t,x)-e_\infty):(e_n(t,x)-e_\infty(t,x))\, \d x\,\d t\\
&\displaystyle ={\mathcal K}_n(e_n)-2\int_0^T\int_\Omega \A_n(t,x)e_\infty(t,x):e_n(t,x)\, \d x\,\d t+{\mathcal K}_n(e_\infty)\,.
\end{split}
\end{equation*}
By \eqref{appendixconti-c} and \eqref{1902} we obtain $\limsup_n {\mathcal K}_n(e_n-e_\infty)\le0$.
Since $\A_n(t,x)\in{\mathcal T}_{C_1,C_2}$ we have ${\mathcal K}_n(e_n-e_\infty)\ge C_1\|e_n-e_\infty\|^2_{L^2((0,T)\times \Omega;\R^{d\times d})}$, so that \eqref{strongen} holds.
\end{proof}

Thanks to the mechanical energy inequality proven above, we may deduce strong convergence of the displacements, as provided in the following lemma.

\begin{lemma}[Stronger convergences]\label{strongerconv}
Let the assumptions of Theorem~\ref{thm:main} be satisfied  and let
$(u,z,\theta)$ be  a triple given by Proposition~\ref{Conv}. Then
\begin{equation}\label{strongconvD}
\lim_{n\to\infty}
\integlin{0}{T}{\!\integ{\Omega}{\DD(\underz,\underth) e(\dot u_n)\psm e(\dot u_n)}{x}}{t}
=\integlin{0}{T}{\!\integ{\Omega}{\DD(z,\theta) e(\dot u)\psm e(\dot u)}{x}}{t}
\end{equation}
and  
then
\begin{equation}\label{further-strong-convdote}
e(\dot u_n)\to e(\dot u)\quad\text{strongly in } L^2 ((0,T)\times\Omega;\R^{d \times d})\,.
\end{equation}
\end{lemma}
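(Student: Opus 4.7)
My plan is to obtain \eqref{strongconvD} by combining a $\limsup$ inequality, derived from the discrete mechanical energy inequality \eqref{disc-mech-energy-ineq} at $t=T$, with the matching $\liminf$ inequality coming from weak lower semicontinuity of quadratic forms with a.e.\ converging variable coefficients. Once \eqref{strongconvD} is established, the strong convergence \eqref{further-strong-convdote} follows as a direct application of Lemma \ref{appendix}.

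\textbf{The limsup inequality.} I would evaluate \eqref{disc-mech-energy-ineq} at $t=T$, move the thermal expansion contribution $-\int_0^T\!\!\int_\Omega\barth\,\BB{:}e(\dot u_n)$ to the right-hand side, and take $\limsup_{n\to\infty}$. By \eqref{ptcdu} and the weak lower semicontinuity of the $L^2$-norm, $\liminf_n\tfrac\rho2\|\dot u_n(T)\|^2_{L^2(\Omega;\R^d)}\ge\tfrac\rho2\|\dot u(T)\|^2_{L^2(\Omega;\R^d)}$; for $\E_n(T,\baru(T),\barz(T))$, convergences \eqref{convu-new}, \eqref{convoz}, \eqref{ass-forces-n-bis} combined with the lower semicontinuity of $\calE$ and the positivity of the penalizing term $\tfrac{\taun}{\gamma}\int_\Omega|e(\baru(T))|^\gamma\,\mathrm{d}x$ yield $\liminf_n\E_n(T,\baru(T),\barz(T))\ge\E(T,u(T),z(T))$. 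The term $\int_\Omega(z_0{-}\barz(T))\,\mathrm{d}x$ passes to the limit by \eqref{convozr}. On the right-hand side, $\E_n(0,u_n^0,z_0)\to\E(0,u_0,z_0)$ by \eqref{tilde-uzero} (using that $\taun\|e(u_n^0)\|_{L^\gamma}^\gamma\to 0$), while $\int_0^T\!\langle\dot\bigF_n,\underu\rangle\,\mathrm{d}s\to\int_0^T\!\langle\dot\bigF,u\rangle\,\mathrm{d}s$ by a discrete integration-by-parts identity together with \eqref{ass-forces-n-dot}, \eqref{ass-forces-n-bis}, \eqref{convou}, and \eqref{convu-new}. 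Crucially, the thermal expansion term passes to the limit by a weak-strong argument: $\barth\to\theta$ strongly in $L^2((0,T)\times\Omega)$ by \eqref{conv-theta-lp} (since $2<8/3$ in $d{=}3$ and $2<3$ in $d{=}2$) and $e(\dot u_n)\rightharpoonup e(\dot u)$ weakly in $L^2((0,T)\times\Omega;\R^{d\times d})$ by \eqref{convu}. Substituting the mechanical energy \emph{equality} \eqref{mech-energy-ineq} from Proposition \ref{MechEE} for the limiting right-hand side and canceling the matching terms, I obtain
$$
\limsup_{n\to\infty}\int_0^T\!\!\!\!\int_\Omega\DD(\underz,\underth)e(\dot u_n){:}e(\dot u_n)\,\mathrm{d}x\,\mathrm{d}t\le\int_0^T\!\!\!\!\int_\Omega\DD(z,\theta)e(\dot u){:}e(\dot u)\,\mathrm{d}x\,\mathrm{d}t.
$$

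\textbf{Matching liminf and conclusion.} For the reverse inequality, I would invoke the standard weak lower semicontinuity of quadratic forms with variable coefficients: up to a not-relabeled subsequence, $\DD(\underz,\underth)\to\DD(z,\theta)$ a.e.\ in $(0,T)\times\Omega$ by continuity \eqref{conti} together with \eqref{convptz} and \eqref{convustrtheta}; the tensors $\DD(\underz,\underth)$ are uniformly bounded and positive definite by \eqref{assCD-3}; and $e(\dot u_n)\rightharpoonup e(\dot u)$ weakly in $L^2$. This yields the matching $\liminf$ inequality and hence \eqref{strongconvD}. Finally, \eqref{further-strong-convdote} follows directly from Lemma \ref{appendix} applied with $\A_n(t,x):=\DD(\underz(t,x),\underth(t,x))$, $\A_\infty(t,x):=\DD(z(t,x),\theta(t,x))$, $e_n:=e(\dot u_n)$, $e_\infty:=e(\dot u)$: hypotheses \eqref{1902a}--\eqref{1902c} are respectively ensured by the a.e.\ convergence of $\DD$ just noted, by \eqref{convu}, and by \eqref{strongconvD}.

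\textbf{Main obstacle.} The most delicate point is handling the thermal expansion term $\int_0^T\!\!\int_\Omega\barth\,\BB{:}e(\dot u_n)$: it must be shown to converge (not merely to be lower/upper semicontinuous), because both $\DD$-quadratic and $\BB$-linear contributions appear with the same sign structure in \eqref{disc-mech-energy-ineq} and the energy balance only controls their sum. It is precisely the enhanced $L^p$-integrability of $\barth$ with $p>2$ guaranteed by \eqref{apthetaLp}--\eqref{conv-theta-lp} (itself a consequence of the subquadratic growth \eqref{ass-K-b} of $\KK$) that makes the weak-strong pairing admissible here.
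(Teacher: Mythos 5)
Your proposal is correct and follows essentially the same route as the paper: a $\limsup$ bound obtained by combining the discrete mechanical energy inequality \eqref{disc-mech-energy-ineq} with the time-continuous mechanical energy equality of Proposition \ref{MechEE} (treating the thermal expansion term by the strong $L^2$-convergence of $\barth$ from \eqref{conv-theta-lp} paired with the weak convergence of $e(\dot u_n)$), a matching $\liminf$ by lower semicontinuity of the quadratic form with a.e.\ convergent coefficients $\DD(\underz,\underth)$, and then Lemma \ref{appendix} for \eqref{further-strong-convdote}. The only cosmetic difference is that the paper carries the rate-independent dissipation term $\int_\Omega(z_n(0)-z_n(T))\,\mathrm{d}x$ along in its chain of inequalities, whereas you pass it to the limit separately, which is equivalent.
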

%
\begin{proof}
By lower semicontinuity, taking into account the convergences already proven in Proposition~\ref{Conv},
together with both the discrete mechanical energy inequality \eqref{disc-mech-energy-ineq}
and the mechanical energy equality~\eqref{mech-energy-ineq}, the following chain of inequalities holds:
\begin{align*}
&\integlin{0}{T}{\!\integ{\Omega}{\DD(z,\theta) e(\dot u)\psm e(\dot u)}{x}}{t}
+\int_\Omega (z_0-z(T))\,{\mathrm d} x\\ \displaybreak[0]
& \le\liminf_n\left(\integlin{0}{T}{\!\integ{\Omega}{\DD(\underz,\underth) e(\dot u_n)\psm e(\dot u_n)}{x}}{t}
+\int_\Omega (z_n(0){-}z_n(T))\,{\mathrm d} x\right)\\ \displaybreak[0]
&\le\limsup_n\left(\integlin{0}{T}{\!\integ{\Omega}{\DD(\underz,\underth) e(\dot u_n)\psm e(\dot u_n)}{x}}{t}
+\int_\Omega (z_n(0){-}z_n(T))\,{\mathrm d} x\right)
\\
& \le\limsup_n \Bigg(-\E_n(T,u_n(T),z_n(T))+ \mathcal{E}_n (0,u_0,z_0)
-\tfrac\rho2{\integ{\Om}{\mod{\dot u_n(T)}^2}{x}}
+\tfrac\rho2{\integ{\Om}{\mod{\dot u_0}^2}{x}} \\ \displaybreak[0]
&\phantom{\ \le\limsup_n\Big(} \left. +\integlin{0}{T}{\!\integ{\Omega}{\barth\,\BB\psm e(\dot u_n)}{x}}{t}
+\integlin{0}{T} {\partial_t \E_n (s,\underu,\underz) }{s}
\right) \\
&\le -\E(T,u(T),z(T)))+\mathcal{E} (0,u_0,z_0)-\tfrac\rho2{\integ{\Om}{\mod{\dot u(T)}^2}{x}}
+\tfrac\rho2{\integ{\Om}{\mod{\dot u_0}^2}{x}}\\ \displaybreak[0]
&\phantom{\le}+\integlin{0}{T}{\!\integ{\Omega}{\theta\,\BB\psm e(\dot u)}{x}}{t}
+\integlin{0}{T} {\partial_t \E (s,u,z) }{s}\\
&=\integlin{0}{T}{\!\integ{\Omega}{\DD(z,\theta) e(\dot u)\psm e(\dot u)}{x}}{t}
+\int_\Omega (z_0{-}z(T))\,{\mathrm d} x\,.
\end{align*}
Hence all inequalities above are actually equalities and we deduce that \eqref{strongconvD} holds.
\par
Next, we apply Lemma~\ref{appendix} with $\A_n=\DD(\underz,\underth)$, $\A_\infty=\DD(z,\theta)$, $e_n=e(\dot u_n)$, and $e_\infty=e(\dot u)$.
Indeed, \eqref{1902a} is obtained from the strong convergences \eqref{convptz} and \eqref{convustrtheta} up to the passage to a further subsequence converging pointwise; the weak convergence \eqref{1902b} is given in \eqref{convu}, while \eqref{1902c} is provided by \eqref{strongconvD}.
Therefore we deduce that \eqref{further-strong-convdote} holds (for the initial subsequence, since the limit is the same for all subsubsequences).
\end{proof}
Finally, we pass to the limit in the heat equation.
\begin{prop}[Limit passage in the weak form of the heat equation]
\label{Heat}
Let the assumptions of Theorem~\ref{thm:main} be satisfied, Let
$(u,z,\theta)$ be  a triple given by Proposition~\ref{Conv}, and let
$t\in[0,T]$. Then the weak formulation of the heat equation
\eqref{weak-heat} holds.
\end{prop}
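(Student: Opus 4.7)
The plan is to pass to the limit in the time-discrete heat equation \eqref{disc-heat}, term by term, by choosing the discrete test functions $(\eta_n^k)_{k=0}^n$ as local means of a smooth test function $\eta\in H^1(0,T;L^2(\Omega))\cap\rmC^0([0,T];W^{2,d+\delta}(\Omega))$, so that the piecewise constant and linear interpolants $\bareta_n,\eta_n$ satisfy $\bareta_n\to\eta$ in $L^p(0,T;W^{2,d+\delta}(\Omega))$ for all $p<\infty$, $\dot\eta_n\to\dot\eta$ in $L^2(0,T;L^2(\Omega))$, and $\bareta_n(t)\to\eta(t)$ in $W^{2,d+\delta}(\Omega)$ for every $t$. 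Since $W^{2,d+\delta}(\Omega)\hookrightarrow W^{1,\infty}(\Omega)\cap\rmC^0(\overline\Omega)$, we also have uniform convergence on $[0,T]\times\overline\Omega$, which will be crucial for the measure term.

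For the linear terms on the left-hand side of \eqref{disc-heat}, the boundary contribution $\int_\Omega \barth(t)\bareta(t)\,\d x$ passes to the limit by combining the pointwise-in-time weak convergence \eqref{ptw-conv-teta} with the strong convergence of $\bareta(t)$ in $W^{2,d+\delta}(\Omega)$. The initial-data term is immediate by \eqref{ass-forces-n}-type convergence of $\eta_n(0)$. The term $-\int_0^{\bartau(t)}\!\int_\Omega \underth\,\dot\eta_n\,\d x\,\d s$ passes by combining the strong convergence $\underth\to\theta$ from \eqref{conv-theta-lp} with the strong $L^2$-convergence of $\dot\eta_n$. Similarly, the source terms on the right-hand side pass by \eqref{ass-dataHeat-n} together with the strong convergence of $\eta_n$.

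For the quasilinear diffusion term $\int_0^{\bartau(t)}\!\int_\Omega \KK(\barz,\barth)\nabla\barth\cdot\nabla\bareta\,\d x\,\d s$, I would split the integrand as $(\KK(\barz,\barth)\nabla\bareta)\cdot\nabla\barth$. By the continuity of $\KK$ and the strong convergences \eqref{convozr}, \eqref{conv-theta-lp}, one has $\KK(\barz,\barth)\to\KK(z,\theta)$ a.e.\ in $(0,T)\times\Omega$, and by the growth bound \eqref{ass-K-b} together with $\barth\in L^p((0,T)\times\Omega)$ for $p$ from \eqref{apthetaLp} and the fact that $\kappa<\kappa_d$, one obtains an $L^{2+\sigma}$ majorant for some $\sigma>0$. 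Hence $\KK(\barz,\barth)\nabla\bareta\to\KK(z,\theta)\nabla\eta$ strongly in $L^2((0,T)\times\Omega;\R^d)$ by Vitali, while $\nabla\barth\rightharpoonup\nabla\theta$ weakly in $L^2$ by \eqref{convotheta}; a weak-strong pairing concludes.

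The two nonlinear source terms on the right-hand side are where the real work has been done earlier. For the viscous dissipation $\int\!\DD(\underz,\underth)e(\dot u_n)\!:\!e(\dot u_n)\,\bareta\,\d x\,\d s$, the decisive input is the strong convergence $e(\dot u_n)\to e(\dot u)$ in $L^2((0,T)\times\Omega;\R^{d\times d})$ provided by Lemma~\ref{strongerconv}; combined with the pointwise a.e.\ convergence of $\DD(\underz,\underth)$, its uniform bound \eqref{assCD-3}, and the uniform convergence of $\bareta_n$, a standard dominated convergence argument yields the limit. The thermoelastic coupling term $\int \barth\,\BB\!:\!e(\dot u_n)\,\bareta\,\d x\,\d s$ passes by the same weak-strong argument using \eqref{conv-theta-lp} and \eqref{further-strong-convdote}. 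Finally, for the rate-independent heat source $\int_0^{\bartau(t)}\!\int_\Omega \bareta\,|\dot z_n|\,\d x\,\d s$, I would invoke the measure convergence \eqref{measure-convergence} on $[0,T]\times\overline\Omega$ together with the uniform convergence $\bareta_n\to\eta$ in $\rmC^0([0,T]\times\overline\Omega)$, which is precisely what allows testing measure convergence against continuous functions. The main technical obstacle is in fact this passage to the limit in the viscous dissipation, but it is already settled by the energy-equality argument of Proposition~\ref{MechEE} feeding into Lemma~\ref{strongerconv}. Combining all of the above and noting that the identity holds for every $t\in[0,T]$ by the pointwise convergence statements yields \eqref{weak-heat}.
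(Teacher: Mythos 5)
Your overall strategy --- term-by-term limit passage in \eqref{disc-heat} with interpolated smooth test functions, Lemma \ref{strongerconv} for the viscous dissipation, the measure convergence \eqref{measure-convergence} paired with the uniform convergence of $\bareta$ for the $|\dot z_n|$ term, and weak--strong pairings for the thermoelastic and linear terms --- coincides with the paper's proof. However, your treatment of the heat-flux term has a genuine gap. You pair $\nabla\barth\rightharpoonup\nabla\theta$ weakly in $L^2$ (from \eqref{convotheta}) against $\KK(\barz,\barth)\nabla\bareta$, which you claim converges strongly in $L^2$ thanks to an ``$L^{2+\sigma}$ majorant'' coming from \eqref{ass-K-b} and \eqref{apthetaLp}. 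Since $\nabla\bareta$ is only bounded in $L^\infty$, this requires $|\barth|^{\kappa}$ to be bounded in $L^2$, i.e.\ $\barth$ bounded in $L^{2\kappa}((0,T)\times\Omega)$. But \eqref{apthetaLp} only gives $L^{8/3}$ for $d=3$ (resp.\ $L^{3}$ for $d=2$), so your argument works only for $\kappa\le 4/3$ (resp.\ $\kappa\le 3/2$), whereas \eqref{ass-K-b} allows any $\kappa<5/3$ (resp.\ $\kappa<2$). In the upper range of $\kappa$ neither the strong $L^2$ convergence nor even the $L^2$-boundedness of $\KK(\barz,\barth)\nabla\bareta$ is available, and the duality exponents do not match ($\nabla\barth\in L^2$ while $\KK(\barz,\barth)$ is only controlled in $L^{8/(3\kappa)}$, which is below $2$).

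The paper closes exactly this gap with a finer, weighted estimate: it writes $|\KK(\barz,\barth)\nabla\barth|\le c_2\big(|\barth|^{(\kappa-\alpha+2)/2}\,|\barth|^{(\kappa+\alpha-2)/2}|\nabla\barth|+|\nabla\barth|\big)$, uses \eqref{est-alpha-later} (a by-product of testing the discrete heat equation by $(\theta\kk)^{\alpha-1}$) to bound $|\barth|^{(\kappa+\alpha-2)/2}|\nabla\barth|$ in $L^2$, and chooses $\alpha\in(1/2,1)$ with $\kappa-\alpha<2/3$ so that $|\barth|^{(\kappa-\alpha+2)/2}$ is bounded in $L^{2+\delta}$ by \eqref{apthetaLp}. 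This yields only an $L^{1+\delta}$ bound for the flux, together with weak $L^{1+\delta}$ convergence to $\KK(z,\theta)\nabla\theta$ (identified as in \cite{RoRoESTC14}), which is then paired with $\nabla\bareta\in L^\infty$ --- this is precisely the reason the test functions are taken in $W^{2,d+\delta}(\Omega)$. To repair your proof you must either restrict the admissible $\kappa$ or incorporate this weighted estimate; the remaining steps of your argument match the paper and are sound.
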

\begin{proof}
Let us fix  $\eta\in H^1(0,T;L^2(\Omega))\cap \mathrm{C}^0([0,T];W^{2,d+\delta}(\Omega))$,
define $\eta_n^k:= \eta(t_n^k)$
for all $k=0, \ldots, n$, and let
$\eta_n$, $\bareta$ be the piecewise linear and constant interpolations of the values $(\eta_n^k)$.
It can be checked that
\begin{equation}
\label{convergences-interpolants-n}
\begin{aligned}
&
\bareta \to \eta \quad \text{in } L^p (0,T;W^{2,d+\delta}(\Omega))
\text{ for all } 1 \leq p<\infty\,, \qquad \bareta \wtos \eta \quad
\text{in } L^\infty (0,T;W^{2,d+\delta}(\Omega))\,,
\\
& \eta_n \to \eta \quad \text{in } H^1(0,T;L^2(\Omega))\cap \mathrm{C}^0(0,T;W^{2,d+\delta}(\Omega))\,.
\end{aligned}
\end{equation}
We now pass to the limit in the discrete heat equation \eqref{disc-heat}
tested by $\eta_n$. The first three integral terms on the left-hand side of  \eqref{disc-heat}
can be dealt with combining convergences
\eqref{convustrtheta}--\eqref{ptw-conv-teta} with \eqref{convergences-interpolants-n}.
In order to pass to the limit in the fourth one, we argue along the lines of
\cite[proof of Theorem 2.8]{RoRoESTC14} and derive a finer estimate for $ {(\KK(\barz,\barw)\nabla \barw )}_n$.
Indeed, thanks to \eqref{ass-K-b} we have
\[
|\KK(\barz,\barw)\nabla \barw |
\leq c_2 (|\barw|^{(\kappa -\alpha+2)/2} |\barw|^{(\kappa +\alpha-2)/2}
|\nabla \barw| + |\nabla \barw| ) \qquad \aein (0,T) \times \Omega\,,
\]
with $\alpha$ as in \eqref{additional-info}.
From this particular estimate we also gather that
$|\barw|^{(\kappa +\alpha-2)/2} |\nabla \barw|$ is bounded in $L^2 ((0,T) \times \Omega)$.
Since ${(\barw)}_n$ is bounded in $L^{8/3}((0,T) \times \Omega)$ if $d{=}3$ (and in
$L^3 ((0,T) \times \Omega)$ if $d{=}2$), choosing $\alpha \in (1/2,1)$ such that
$\kappa - \alpha<2/3$ (which can be done, since $\kappa<5/3$), we conclude that
$|\barw|^{(\kappa -\alpha+2)/2}$ is bounded in $L^{2+\delta} ((0,T) \times \Omega)$
for some $\delta>0$. All in all,
we have that $\KK(\barz,\barw)\nabla \barw$ is bounded in
$L^{1+\delta} ((0,T) \times \Omega;\R^d)$ for some $\delta>0$.
With the very same arguments as in \cite[proof of Theorem 2.8]{RoRoESTC14}, we show that
\[
\KK(\barz,\barw)\nabla \barw \rightharpoonup \KK(z,\theta) \nabla \theta \quad
\text{in } L^{1+\delta} ((0,T) \times \Omega;\R^d)\,,
\]
which, combined with convergences
\eqref{convergences-interpolants-n} for $\bareta$, is enough to pass
to the limit in the last term on the left-hand side of
\eqref{disc-heat}.

Combining \eqref{convu}, \eqref{conv-theta-lp},
and \eqref{convergences-interpolants-n} yields
$ \integlin{0}{\bartau(t)}{\!\integ{\Om}{\barw\,\BB \psm e(\dot u_n) \,\bareta}{x} }{s} \to
\integlin{0}{t}{\integ{\Om}{ \theta\,\BB \psm e(\dot u) \,\eta}{x}}{s}$
as $n{\to}\infty$, while the passage to the limit in the term
\[
\integlin{0}{\bartau(t)}{\!\integ{\Om}{\DD(\underz,\underth)e(\dot
u_n) \psm e(\dot u_n) \,\bareta}{x}} {s} 
\]
results from
\eqref{further-strong-convdote} combined with \eqref{convergences-interpolants-n}. 
Convergence \eqref{measure-convergence} allows us to deal with the second term
on the right-hand side of \eqref{disc-heat}, and we handle the last
two terms via \eqref{ass-dataHeat-n} and
\eqref{convergences-interpolants-n}, again. This concludes the proof
 of the weak heat equation and of the main existence result Theorem \ref{thm:main}.
\end{proof}
%
\section{Asymptotic behavior in the  slow loading regime: the  vanishing viscosity and inertia limit}
\label{s:6}
%
In this section we  address the limiting behavior of system
\eqref{ourPDE}  as the rate of the external load and of the heat sources
becomes slower and slower. Accordingly, we will rescale time by a factor $\eps>0$.
For analytical reasons we restrict to the case
of a Dirichlet problem in the displacement, namely  within this section  we shall suppose that
\begin{equation}
\label{Dir-b-c}
\partial_{\mathrm{D}}\Omega = \partial \Omega
\,.
\end{equation}
Like in the previous sections, we assume that the Dirichlet datum is homogeneous, cf.\ \eqref{bc-b}.
\par
As $\eps\downarrow0$ we will  \emph{simultaneously}
pass to
\begin{compactenum}
\item
a rate-independent system for the limit displacement and damage variables $(u,z)$,
which does not display any temperature dependence and which formally reads
\%label{our-pde-ris}
\begin{alignat*}{3}
&&&
-\div \CC(z) e(u)
= \fv
 \ & \text{in }
(0,T)\times\Omega \,,
\\
&&&
\partial \mathrm{R}_1 (\dot z) + \mathrm{D}_z G(z,\nabla z)
- \div(\mathrm{D}_\xi G(z,\nabla z) )  +\tfrac12 \CC'(z)e(u) : e(u) \ni 0 \ &
\text{in } (0,T)\times\Omega \,
\end{alignat*}
and will be weakly formulated through the concept of \emph{local solution} to a rate-independent system;
\item
a limit temperature $\theta=\Theta$, which is constant in space, but still time-dependent.
The limit passage in the heat equation amounts to the trivial limit $0=0$,
once more emphasizing that the limit system does not depend on temperature any more.
A rescaling of the heat equation at level $\eps$, however, reveals that $\Theta$ 
evolves in time according to an ODE in the sense of measures and the evolution
is driven by the rate-independent dissipation and a measure
originating from the viscous dissipation. 
\end{compactenum}
Indeed, for the limit system we expect that, if a change of heat is caused
at some spot in the material, then the heat must be conducted all over the material
with infinite speed, so that the temperature is kept constant in space.
This justifies a scaling of the tensor of heat
conduction coefficients for the systems at level $\eps$. 
More precisely, 
we will suppose that
\begin{equation}
\label{scaling-K} 
\KK_\eps(z,\theta):= \tfrac{1}{\eps^\beta}  \KK(z,\theta)\quad\text{with }\KK\text{ satisfying \eqref{ass-K}  and }  \beta>0  \,.
\end{equation}
 While Proposition \ref{Apriori-eps} holds with $\beta>0$, in Theorem \ref{Thm6.1} we shall require $\beta\ge2$. 
\subsection{Time rescaling}
\label{ss:6.1}
Let us now set up the vanishing viscosity analysis
following \cite{Roub09RIPV}, where this analysis was carried out for
\emph{isothermal} rate-independent processes in viscous solids, see also
\cite{DMScQEPP13} in the context of perfect plasticity and
\cite{Roub13ACVB,Scal14LVDP}  for delamination, still in the isothermal case.
We consider a family
${(\fve, \hve, \hse)}_\eps$ of data for system
\eqref{ourPDE} and
 we rescale
$\fve,\,\hve, \,\hse$
 by the factor $\eps>0$,
hence we introduce
\begin{equation*}
\bigF^\eps(t) 
:= \fve(\fte)\, \qquad
\hvue(t):= \hve(\fte)\,, \qquad
\hsue(t):=\hse(\fte) \qquad
\text{for } t \in [0,\tfrac{T}\eps]\,.
\end{equation*}
Theorem  \ref{thm:main} guarantees that for every $\eps>0$ there
exists an energetic solution $(u^\eps, z^\eps,\theta^\eps)$, defined
on $[0,\tfrac T\eps]$, to (the Cauchy problem for) system
\eqref{ourPDE} supplemented with the data
$\bigF^\eps, \,\hvue, \,\hsue$,
and with the matrix of  heat conduction coefficients $\KK_\eps$ from  \eqref{scaling-K}.
For later convenience,
let us recall that such solutions arise as limits of
the time-discrete solutions to Problem \ref{prob-discrete}.
We now perform a rescaling of the solutions in  such a way as to
have them defined on the interval $[0,T]$. Namely, we set
\begin{equation*}
\ures(t) := u^\eps (\tfrac t\eps)\,, \qquad
\zres(t) := z^\eps (\tfrac t\eps)\,, \qquad \thres(t):= \theta^\eps
(\tfrac t\eps) \qquad \text{for } t \in [0,T]\,.
\end{equation*}
It is not difficult to check that, after transforming the time scale,
the triple $(\ures,\zres,\thres)$ (formally) solves the
following system in $(0,T)\times\Omega$:
\begin{subequations}
\label{our-pde-van}
\begin{alignat}{3}
\label{eq:u-van} &&&
 \eps^2\rho\ddot u_\eps-\div\big(\eps\,\DD(z_\eps,\theta_\eps)e(\dot u_\eps) + \CC(z_\eps) e(u_\eps)
-\theta_\eps\,\BB\big)
= \bigFe\,,
\\
\label{eq:z-van} &&&
\partial \mathrm{R}_1 (\dzres) + \mathrm{D}_z G(\zres,\nabla \zres)
- \div(\mathrm{D}_\xi G(\zres,\nabla \zres) )
+\tfrac12 \CC'(\zres)e(\ures) : e(\ures) \ni 0\,,
\\
\label{eq:theta-van} &&&
\eps\dthres-\tfrac1{ \eps^\beta }\div(\KK(\zres,\thres)\nabla\thres)=\eps\mathrm{R}_1(\dzres)+
\eps^2\DD(\zres,\thres) e(\dures)\psm e(\dures) -
\eps\thres\,\BB\psm e(\dures) + H_\eps\,,
\end{alignat}
\end{subequations}
with  the original data $\bigFe:=\fve$, $\hve$, and $\hse$,
and complemented with  the boundary conditions \eqref{bc}.
Since in the following we will be interested in the limit
of \eqref{our-pde-van} as $\eps\downarrow 0$, for notational simplicity
we shall henceforth set $\rho=1$ in \eqref{eq:u-van}.
\paragraph{\bf Energetic solutions for the rescaled system \eqref{init-res}--\eqref{weak-heat-res}.}
For later reference in the limit passage procedure as $\eps \downarrow 0$,
we recall the defining properties of energetic solutions.
Given a quadruple of initial data $(\ures^0,\dures^0,\zres^0,\thres^0)$
satisfying
\eqref{assu-init}, a triple
$(\ures,\zres,\thres)$ is an energetic solution of the Cauchy problem for the PDE system
\eqref{our-pde-van} if
it has the regularity \eqref{reguu},
it complies with the
initial conditions
\begin{equation}
\label{init-res} \ures(0)=\ures^0\,, \quad \dures(0)=\dures^0\,,
\quad  \zres(0)=\zres^0\,, \quad \thres(0)=\thres^0 \quad \aein
\Omega\,,
\end{equation}
and fulfills
\begin{compactitem}
 \item
\emph{semistability and unidirectionality}: for a.a.\ $x\in\Om$, $\zres(\cdot,x)\colon[0,T]\to[0,1]$
is nonincreasing and for all $t\in [0,T]$
\begin{equation}
 \label{semistab-general-res} \forall\, \tilde{z}\in
\calZ\,, \ \tilde{z}\le \zres(t)\colon\quad
\E_\eps(t,\ures(t),\zres(t))\le\E_\eps(t,\ures(t),\tilde z)+\calR_1(\zres(t)-\tilde z) \,,
\end{equation}
with the mechanical energy
\begin{equation}
\label{mech-en-eps}
\E_\eps(t,u,z):=\integ{\Om}{(\tfrac12 \CC(z)e(u)   \psm
e(u)+G(z,\nabla z))}{x} -
\pairing{}{H^1_{\mathrm{D}}(\Om;\R^d)}{\bigF_\eps(t)}{u} \,;
\end{equation}
\item
\emph{weak formulation of the momentum equation}: for all test functions
$v \in L^2(0,T; H_{\mathrm{D}}^{1}(\Omega;\R^d)) \cap W^{1,1}(0,T;L^2(\Omega;\R^d))$
 and for all  $t \in[0,T]$ 
\begin{equation}
\label{e:weak-momentum-variational-res}
\begin{aligned}
& \eps^2\!\integ{\Omega}{\dures(t) \ps v(t)}{x}
- \eps^2\!\integlin{0}{t}{\!\integ{\Omega}{\dures \ps \dot v}{x}}{t}
+\integlin{0}{t}{\!\integ{\Omega}{\left(\eps\,\DD(\zres,\thres)e(\dures) + \CC(\zres)  e(\ures)
-\thres\,\BB \right) \psm e(v)}{x}}{s}
\\
&= \eps^2\!\integ{\Omega}{\dures^0 \ps v(0)}{x}
+ \integlin{0}{t}{\pairing{}{H^1_\mathrm{D}(\Om;\R^d)}{\bigFe}{v}}{s} \,;
\end{aligned}
\end{equation}
\item
\emph{mechanical energy equality}: for all  $t \in[0,T]$
\begin{equation}
\label{mech-energy-eq-res}
\begin{aligned}
&  \tfrac{\eps^2}2\!{\integ{\Om}{\mod{\dot \ures(t)}^2}{x}}+
\E_\eps(t,\ures(t),\zres(t))+ \integ{\Om}{(\zres^0{-}\zres(t))}{x}
+\integlin{0}{t}{\!\integ{\Omega}{\left(\eps\,\DD(\zres,\thres)e(\dures) 
{-} \thres \,\BB \right)\psm e(\dures)}{x}}{s}
\\
&= \tfrac{\eps^2}2\!{\integ{\Om}{\mod{\dures^0}^2}{x}}+
\E_\eps(0,\ures^0,\zres^0)
+\int_0^t\partial_t\E_\eps(s,u(s),z(s))\,\mathrm{d}s
\,;
\end{aligned}
\end{equation}
 \item
\emph{weak  formulation of the heat equation}: for all  $t \in[0,T]$
\begin{equation}
\label{weak-heat-res}
\begin{aligned}
& \eps\pairing{}{W^{2,d+\delta}}{\thres(t)}{\testw(t)}
-\eps\!\integlin{0}{t}{\integ{\Om}{\thres\,\dot\testw}{x}}{s}
+\tfrac{1}{ \eps^\beta }\!\integlin{0}{t}{\integ{\Omega}{\KK(\thres,\zres)\nabla\thres\cdot\nabla \testw}{x}}{s}
\\
&=\eps\!\integ{\Omega}{\thres^0\,\testw(0)}{x}
+\integlin{0}{t}{\integ{\Omega}{\left(\eps^2\DD(\zres,\thres) e(\dures)\psm e(\dures)
- \eps\thres\,\BB\psm e(\dures) \right)\testw}{x}}{s}\\
&\phantom{=}+\eps\!\integlin{0}{t}{\!\integ{\Om}{\testw\mod{\dzres}}{x}}{s}
+\integlin{0}{t}{\!\integ{\partial\Om}{\hse\,\testw}{\acca^{d-1}(x)}}{s}
+\integlin{0}{t}{\!\integ{\Om}{\hve\,\testw}{x}}{s}
\end{aligned}
\end{equation}
for all test functions $\eta\in H^1(0,T;L^2(\Omega))\cap \mathrm{C}^0(0,T;W^{2,d+\delta}(\Omega))$
(recall that $\mod{\dzres}$ denotes the total variation measure of $\zres$).
\end{compactitem}
\begin{rmk}
Let us also observe that
testing \eqref{weak-heat-res} by $\tfrac1\eps$ and
summing up with  \eqref{mech-energy-eq-res} leads to the rescaled total energy equality
\begin{equation}
\label{1750}
\begin{aligned}
 &  \tfrac{\eps^2}2\!{\integ{\Om}{\mod{\dures(t)}^2}{x}}+
\E_\eps(t,\ures(t),\zres(t)) + \int_\Omega \thres(t) \dd x 
\\
& = \tfrac{\eps^2}{2}\!{\integ{\Om}{\mod{\dures^0}^2}{x}}+
\E_\eps(0,\ures^0,\zres^0)   + \int_\Omega \thres^0 \dd x 
\\
& \phantom{=}  +\int_0^t\partial_t\E_\eps(s,\ures(s),\zres(s))\,\mathrm{d}s
+\tfrac1\eps\!\integlin{0}{t}{\!\integ{\partial\Om}{\hse}{\acca^{d-1}(x)}}{s}
+\tfrac1\eps\!\integlin{0}{t}{\!\integ{\Om}{\hve}{x}}{s} \,.
\end{aligned}
\end{equation}
\end{rmk}
%
\subsection{A priori estimates uniform with respect to $\eps$}
\label{ss:6.2}
%
As done in the proof of Theorem \ref{thm:main}, we shall derive the basic
a  priori  estimates on the rescaled solutions
${(\ures,\zres,\thres)}_\eps$ from the total energy equality
\eqref{1750}. Therefore, it is clear that we shall have to assume
that the families of data ${(\hve)}_\eps$ and ${(\hse)}_\eps$
converge to zero in the sense that there exists $C>0$ such that for
all $\eps>0$
\begin{equation}
\label{Heps-heps}
\integlin{0}{t}{\!\integ{\Om}{\hve}{x}}{s} \leq
C\eps\,, \qquad
\int_0^t\!\int_{\partial\Om}\hse\,\mathrm{d}\calH^{d-1}(x)\,\mathrm{d}s
\leq C\eps\,.
\end{equation}
Furthermore,  we shall suppose that  there exists $\bigF$ such that
\begin{equation}
\label{weak-limit-bigF} \bigFe \to \bigF  \qquad
\text{ in } H^1(0,T; H^1_{\mathrm{D}}(\Omega;\R^d)^*) \,.
\end{equation}

We are now in a position to derive a  priori bounds on the
rescaled solutions ${(\ures,\zres,\thres)}_\eps$, uniform with respect
to $\eps>0$. These estimates are the
time-continuous counterpart of the
\emph{First}--\emph{Third a priori estimates}
in the proof of Proposition \ref{Apriori}.
 Actually, the  calculations underlying the \emph{Second} and \emph{Third} estimates
  can be performed only
formally, when arguing on the energetic formulation of system
\eqref{our-pde-van}. Indeed, 
these computations are based on testing the weak  heat equation \eqref{weak-heat-res}  by $ \thres^{\alpha-1}$, which is not admissible since 
 $\thres^{\alpha-1} \notin \mathrm{C}^0
([0,T]; W^{2,d+\delta}(\Omega))$.

That is why Proposition \ref{Apriori-eps} below will be stated
not for \emph{all} energetic solutions to the rescaled system
\eqref{our-pde-van}, but just for
those arising  from the discrete solutions
 to \eqref{our-pde-van} 
constructed in  Section \ref{ss:4.1}.
More precisely, we shall call ``approximable solution''
to the rescaled system \eqref{our-pde-van}
any triple obtained in the time-discrete to continuous limit,
for which convergences \eqref{convs} of Proposition \ref{Conv} hold;   in Section \ref{s:5} we have shown
that any approximable solution is an energetic solution. 
Now, it can be checked that some of
the a  priori estimates on the discrete solutions in Proposition
\ref{Apriori} (i.e.\ those corresponding to \eqref{apriori-eps} below)
 are uniform with respect to $\tau$ \emph{and} $\eps$
as well. Therefore, Proposition \ref{Conv} ensures that they are
inherited by   the ``approximable'' solutions in the limit
$\tau\downarrow 0$, still  uniformly with respect to $\eps$. 
\par
Nonetheless,
to simplify the exposition,
 in the proof of Prop.\ \ref{Apriori-eps}  we will no longer work on the time-discrete scheme but rather develop the calculations directly
 (and sometimes only formally)
 on the time-continuous level.  
\par
%


%
%
\begin{prop}[A priori estimates]
\label{Apriori-eps} Assume
\eqref{ass-dom}--\eqref{assG}, \eqref{scaling-K}  with $\beta>0$,  
${(\hve)}_\eps \subset L^1 (0,T; L^1(\Omega)) \cap L^2 (0,T; H^1(\Omega)^*)$,
${(\hse)}_\eps   \subset L^1 (0,T; L^2(\partial\Omega))$ fulfill
\eqref{Heps-heps}, and 
${(\bigFe)}_\eps \subset H^1 (0,T; H_{\mathrm{D}}^1(\Omega;\R^d)^*)$  comply with
\eqref{weak-limit-bigF}.
In addition to \eqref{assu-init}, let the family
of initial data ${(\ures^0,\dures^0,\zres^0,\thres^0)}_\eps$
fulfill
\begin{equation}
\label{family-eps-below}
| \E_\eps(0,\ures^0,\zres^0)| + \eps\|\dures^0
\|_{L^2(\Omega;\R^d)}+ \|\thres^0 \|_{L^1(\Omega)} 
\leq C
\end{equation}
for a constant $C$ independent of $\eps$. 
Let ${(u_\eps,z_\eps,\theta_\eps)}_\eps$
be a family of  approximable solutions to system \eqref{our-pde-van}.
Then, there exists a constant
$C>0$ such that the following estimates hold for all  $\eps>0$:
\begin{subequations}
\label{apriori-eps}
\begin{align}
\label{apDu-eps}
\|u_\eps\|_{L^\infty(0,T;H_{\mathrm{D}}^1 (\Omega;\R^d))}&\leq C\,,\\ \displaybreak[0]
\label{ap-u-lin-linfty-eps}
\eps \| \dot{u}_\eps \|_{L^\infty (0,T; L^2(\Omega;\R^d))} &\leq C\,,\\ \displaybreak[0]
\label{zeps-bv}
\calR_1 (\zres(T) - \zres^0) &\leq C\,,\\ \displaybreak[0]
\label{ap-z-infty-eps}
 \| {z}_\eps \|_{L^\infty ((0,T) \times \Omega)}
&\leq 1\,,
\\ \displaybreak[0]
\label{ap-z-w1q-eps} \| {z}_\eps \|_{L^\infty (0,T;
W^{1,q}(\Omega))}&\leq C\,,
\\ \displaybreak[0]
\label{aptheta1-eps}
\|{\theta}_\eps\|_{L^\infty(0,T;L^1(\Omega))}&\leq  C\,, \\ \displaybreak[0]
\label{apthetanabla-eps}
 \| \nabla{\theta}_\eps\|_{L^2(0,T;L^2(\Omega;\R^d))}&\leq  C  \eps^{\beta/2}  \,,
\\ \displaybreak[0]
\label{apthetaH12-eps}
 \|{\theta}_\eps\|_{L^2(0,T;H^1(\Omega))}&\leq  C\,,
\\ \displaybreak[0]
\label{apthetaLp-eps}
\|{\theta}_\eps\|_{L^p((0,T)\times\Omega)}&\leq C \quad\text{for any
}p\in\left\{
\begin{array}{ll}
[1,8/3]&\text{if }d{=}3\,,\\
{[1,3]}&\text{if }d{=}2\,,
\end{array}
\right.
\end{align}
\end{subequations}
with $\calR_1$ from \eqref{dissip-potential}.
\end{prop}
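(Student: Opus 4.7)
\textbf{Proof plan for Proposition \ref{Apriori-eps}.} The strategy is to mirror the derivation of Proposition \ref{Apriori}, but tracking carefully the dependence of every quantity on $\eps$. Since the test function $\theta_\eps^{\alpha-1}$ needed for the temperature estimates is not admissible in the weak formulation \eqref{weak-heat-res}, the calculations will be performed on the time-discrete approximations to \eqref{our-pde-van} (the bounds in Proposition \ref{Apriori} are actually uniform also in $\eps$), and then passed to the limit $\tau\downarrow 0$ via the convergences of Proposition \ref{Conv}, exploiting the very assumption that ${(u_\eps,z_\eps,\theta_\eps)}_\eps$ are approximable. For brevity I describe the computation directly on the time-continuous level.

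\smallskip

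\emph{First batch of estimates \eqref{apDu-eps}--\eqref{aptheta1-eps}.} These follow from the rescaled total energy equality \eqref{1750}. The unidirectionality of damage gives $0\leq z_\eps(t,x)\leq z_\eps^0(x)\leq 1$ a.e., whence \eqref{ap-z-infty-eps} and, together with $|\dot z_\eps|=-\dot z_\eps$, the dissipation bound \eqref{zeps-bv}. Thanks to \eqref{Heps-heps} the rescaled source contributions $\frac{1}{\eps}\int_0^t\int_\Omega H_\eps\dd x\dd s$ and $\frac{1}{\eps}\int_0^t\int_{\partial\Omega}h_\eps\dd\acca^{d-1}\dd s$ stay uniformly bounded, while the initial energy is controlled by \eqref{family-eps-below}. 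Coercivity of $\calE_\eps$ (cf.\ \eqref{est-mechen}, applied with $\partial_\mathrm{D}\Omega=\partial\Omega$) combined with $|\partial_t\calE_\eps(s,u_\eps,z_\eps)|\leq\|\dot\bigF_\eps(s)\|_{H^1_\mathrm{D}(\Omega;\R^d)^*}\,\|u_\eps(s)\|_{H^1_\mathrm{D}(\Omega;\R^d)}$ and \eqref{weak-limit-bigF} allow us to apply Gronwall's lemma, yielding \eqref{apDu-eps}, \eqref{ap-u-lin-linfty-eps}, \eqref{ap-z-w1q-eps}, and \eqref{aptheta1-eps} in one stroke.

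\smallskip

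\emph{Temperature gradient estimate \eqref{apthetanabla-eps}.} Following the template of the \emph{Second a priori estimate} in Proposition \ref{Apriori}, I would test \eqref{weak-heat-res} by $\theta_\eps^{\alpha-1}$ for $\alpha\in(0,1)$ (admissible at the discrete level thanks to the strict positivity of the discrete temperatures). Using $\dot\theta_\eps\theta_\eps^{\alpha-1}=\tfrac{d}{dt}(\theta_\eps^\alpha/\alpha)$ and $\KK\nabla\theta_\eps\cdot\nabla(\theta_\eps^{\alpha-1})=-\tfrac{4(1-\alpha)}{\alpha^2}\KK\nabla(\theta_\eps^{\alpha/2})\cdot\nabla(\theta_\eps^{\alpha/2})$, integrating over $[0,t]$ and moving the nonnegative viscous, rate-independent and source terms to the left yields
\[
\frac{4(1-\alpha)}{\alpha^2\eps^\beta}\!\!\int_0^t\!\!\!\int_\Omega\!\!\KK(z_\eps,\theta_\eps)|\nabla\theta_\eps^{\alpha/2}|^2\dd x\dd s+\eps^2\!\!\int_0^t\!\!\!\int_\Omega\!\!\DD(z_\eps,\theta_\eps)e(\dures)\psm e(\dures)\theta_\eps^{\alpha-1}\dd x\dd s\leq C-\eps\!\!\int_0^t\!\!\!\int_\Omega\!\!\theta_\eps\BB\psm e(\dures)\theta_\eps^{\alpha-1}\dd x\dd s,
\]
where the first two terms on the analogue of \eqref{tricky-bis} are bounded by $C$ since $\theta^\alpha\leq\theta+1$ and \eqref{aptheta1-eps}, \eqref{family-eps-below} are available. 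For the thermal expansion term I estimate by Young's inequality
$\eps|\theta_\eps\BB\psm e(\dures)\theta_\eps^{\alpha-1}|\leq\tfrac{\eps^2 C_\DD^1}{2}|e(\dures)|^2\theta_\eps^{\alpha-1}+\tfrac{(C_\BB)^2}{2C_\DD^1}\theta_\eps^{\alpha+1}$;
the first contribution is absorbed into the viscous dissipation on the left (which now carries the extra factor $\eps^2$ precisely matching the Young scaling). Choosing the sharp value $\alpha=2-\kappa\in(0,1)$ (allowed by $\kappa\in(1,\kappa_d)$) makes $\theta_\eps^{(\kappa+\alpha)/2}=\theta_\eps$, so using \eqref{ass-K-b} the surviving left-hand side becomes $\tfrac{c\,c_1}{\eps^\beta}\int_0^t\!\int_\Omega|\nabla\theta_\eps|^2$. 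After a Gagliardo-Nirenberg interpolation (exactly as in \eqref{calc2.2}--\eqref{calc2.3-2}) on the residual $\int\int\theta_\eps^{\alpha+1}$, a fraction of $\int\int|\nabla\theta_\eps|^2$ is absorbed back and I conclude
\[
\int_0^T\!\!\!\int_\Omega|\nabla\theta_\eps|^2\dd x\dd t\leq C\eps^\beta,
\]
which is \eqref{apthetanabla-eps}.

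\smallskip

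\emph{From \eqref{apthetanabla-eps} to \eqref{apthetaH12-eps} and \eqref{apthetaLp-eps}.} The $L^2$ bound on $\theta_\eps$ follows from Poincaré-Wirtinger: denoting the spatial mean of $\theta_\eps(t)$ by $\overline\theta_\eps(t)$, one has $|\overline\theta_\eps(t)|\leq C$ by \eqref{aptheta1-eps}, while $\|\theta_\eps-\overline\theta_\eps\|_{L^2(\Omega)}\leq C\|\nabla\theta_\eps\|_{L^2(\Omega)}$, whence $\|\theta_\eps\|_{L^2(0,T;L^2(\Omega))}\leq C(1+\eps^{\beta/2})\leq C$. Combined with \eqref{apthetanabla-eps} this gives \eqref{apthetaH12-eps}, and \eqref{apthetaLp-eps} follows by Gagliardo-Nirenberg interpolation between $L^\infty(0,T;L^1(\Omega))$ and $L^2(0,T;H^1(\Omega))$, exactly as in the last step of the proof of Proposition \ref{Apriori}. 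The main technical obstacle is ensuring that the absorption procedure in the second paragraph produces the correct $\eps^\beta$-power on the right, which forces the sharp choice $\alpha=2-\kappa$ and relies on the fact that the viscous dissipation carries the favourable $\eps^2$-factor balancing the $\eps^1$-factor in the thermal expansion coupling; no smallness assumption on $\beta$ is needed at this stage.
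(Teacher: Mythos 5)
Your proposal is correct and takes essentially the same route as the paper's own proof: the first group of bounds from the rescaled total energy balance \eqref{1750} together with the coercivity \eqref{est-mechen}, \eqref{Heps-heps}, \eqref{family-eps-below}, \eqref{weak-limit-bigF} and Gronwall; then the (formally written, rigorously time-discrete) test of the heat equation by $\theta_\eps^{\alpha-1}$ with the concavity identity, Young absorption of the thermal-expansion term into the $\eps^2$-weighted viscous dissipation, the choice $\alpha=2-\kappa$ and Gagliardo--Nirenberg absorption into the $\eps^{-\beta}$-weighted conduction term, and finally Poincar\'e plus interpolation for \eqref{apthetaH12-eps}--\eqref{apthetaLp-eps} — all matching the paper, including the justification via approximable solutions. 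The only blemish is the sign of the coupling term in your displayed inequality, which should read $+\eps\int_0^t\!\int_\Omega\theta_\eps\,\BB\psm e(\dures)\,\theta_\eps^{\alpha-1}\dd x\dd s$ on the right-hand side (cf.\ \eqref{calc-eps-1}); since you immediately estimate its modulus by Young's inequality, the argument is unaffected.
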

\begin{proof}[Sketch of the proof]
{\bf First a priori estimate: ad \eqref{apDu-eps},
\eqref{ap-u-lin-linfty-eps}, \eqref{zeps-bv}, \eqref{ap-z-infty-eps}
\eqref{ap-z-w1q-eps}, \eqref{aptheta1-eps}: }
Estimate \eqref{ap-z-infty-eps} is obvious.
Estimate \eqref{zeps-bv} follows from the definition of $\calR_1$, \eqref{Gind}, and \eqref{hyp-init},
and the fact that the functions $z_\eps(\cdot,x)$ are nonincreasing.
We start from the total energy equality \eqref{1750}.
Also thanks to \eqref{weak-limit-bigF}, the energies $\E_\eps$ enjoy the
coercivity property \eqref{est-mechen}  with constants independent
of $\eps$. Therefore,  relying on the uniform bound \eqref{weak-limit-bigF}
for $\dot{\bigF}_\eps$, and using that $\thres >0$ a.e.\ in $(0,T)\times \Omega$
for every $\eps>0$,
one can repeat the very same calculations as
in the first step of the proof of Proposition \ref{Apriori}, and conclude
that the left-hand side of \eqref{1750} is uniformly bounded from
above and from below, whence
\eqref{apDu-eps},
\eqref{ap-u-lin-linfty-eps},
\eqref{ap-z-w1q-eps}, \eqref{aptheta1-eps}.

{\bf Second and third a priori estimates: ad \eqref{apthetanabla-eps},  \eqref{apthetaH12-eps},
and \eqref{apthetaLp-eps}: }
We (formally) test \eqref{weak-heat-res} by $\thres^{\alpha-1}$,
integrate in time,
and arrive at the (formally written) analogue of \eqref{tricky}, viz.\
\begin{equation}
\label{calc-eps-1}
\begin{aligned}
& \tfrac{c}{ \eps^\beta } \!\int_0^t \!\int_\Omega  \KK(\zres,\thres) \nabla
(\thres^{\alpha/2} )\ps \nabla (\thres^{\alpha/2})  \dd x \dd s  +
\eps^2\! \int_0^t \!\int_\Omega \DD(\zres,\thres) e(\dures)  \psm e(\dures)
\thres^{\alpha-1} \dd x \dd s
\\ & \phantom{=}
+\eps \!\int_0^t \!\int_\Omega \thres^{\alpha-1} |\dot{z}_\eps|  \dd x
\dd s
+
\int_0^t \!\int_{\partial\Omega}\hse \thres^{\alpha-1}
\,\mathrm{d}\mathcal{H}^{d-1}(x)\dd s
+ \int_0^t \!\int_\Omega \hve \thres^{\alpha-1} \dd x \dd s \\
& = \eps \!\int_0^t
\!\int_\Omega\dot{\theta}_\eps\thres^{\alpha-1}\,\mathrm{d}x  \dd s
+\eps  \!\int_0^t  \!\int_\Omega \thres\,\BB  \psm e(\dot u_\eps)
\thres^{\alpha-1}\,\mathrm{d}x \dd s \doteq I_1 +I_2\,.
\end{aligned}
\end{equation}
As in the proof of Proposition \ref{Apriori}, we estimate
\begin{equation}
\label{calc-eps-2}
I_1 = \eps  \!\int_\Omega  \tfrac{(\thres(t))^\alpha }\alpha \dd x
-  \eps \!\int_\Omega   \tfrac{(\thres^0)^\alpha}\alpha  \dd x\,,
\end{equation}
whereas we estimate $I_2= \iint \eps \thres \,\BB \psm e(\dot u_\eps) \thres^{\alpha-1}$ by
\begin{equation}
\label{calc-eps-3}
I_2 \leq \eps^2 \tfrac{C_{\DD}^1}2  \int_0^t \!\int_\Omega  |e(\dures)|^2 \thres^{\alpha-1}  \dd x \dd s +
C  \int_0^t  \!\int_\Omega |\thres|^2 \thres^{\alpha-1}  \dd x \dd s\,,
\end{equation}
where the constant $C$ subsumes  the norm $|\BB|$  as well. Combining \eqref{calc-eps-1}--\eqref{calc-eps-3}
and then arguing exactly in the same way as in the proof of Proposition\
\ref{Apriori}, we end up with
the analogue of \eqref{tricky-bis}, i.e.,
\begin{equation*}
\tfrac{1}{ \eps^\beta }
\int_0^{t}\!\!\! \int_\Omega\!\!  \KK (\zres,\thres)
\nabla(\thres^{\alpha/2}) \cdot \nabla(\thres^{\alpha/2})   \dd x \dd s
+\int_\Omega\tfrac{\eps}\alpha (\thres^0)^\alpha \dd x
\leq   \int_\Omega\!\!  \tfrac{\eps}\alpha (\thres(t))^\alpha  \dd x +
 C  \int_0^{t}\!\!\!
\int_\Omega \thres^{\alpha+1} (s) \dd x \dd s\,,
\end{equation*}
whence
$
\tfrac1{ \eps^\beta }\int_0^T \int_\Omega \KK (\zres,\thres)
\nabla(\thres^{\alpha/2}) \cdot \nabla(\thres^{\alpha/2} ) \dd x \dd t
\leq C $.
From this, with the same arguments as in the third step of the
proof of Proposition \ref{Apriori},  cf.\ \eqref{171207},  we infer  that
\[
\int_0^T \!\int_\Omega |\nabla \thres|^2 \dd x \dd t \leq C \eps^\beta ,
\]
i.e.\ \eqref{apthetanabla-eps}.
Then, \eqref{apthetaH12-eps} follows from  \eqref{apthetanabla-eps}
and \eqref{aptheta1-eps}, via the Poincar\'e inequality.
Finally, \eqref{apthetaLp-eps} ensues by interpolation, as in the proof of Proposition \ref{Apriori}.
\end{proof}
Observe that in the proof of Proposition \ref{Apriori-eps}  we have not been
able to repeat the calculations in the  \emph{Fourth and Fifth estimates}, 
cf.\ the proof of Proposition \ref{Apriori}.
In particular, from the mechanical energy equality \eqref{mech-energy-eq-res} we  have not
been able to deduce an estimate for $\eps^{1/2} e(\dures)$ in
$L^2(0,T; L^2(\Omega;\R^{d \times d}))$,  since we cannot bound the term
$\int_0^t \int_\Omega \thres  \colon e(\dures) \dd x \dd s$ on the
right-hand side of  \eqref{mech-energy-eq-res}.
Therefore, in the proof
of our convergence result for vanishing viscosity and inertia, Theorem \ref{Thm6.1}
below, we shall have to resort to careful arguments
in order to handle the terms containing  $e(\dures)$, in the passage to the limit
in the momentum equation and mechanical energy equality, cf.\
\eqref{omg}--\eqref{halle}. In particular, differently from Proposition \ref{Apriori},
for a vanishing sequence ${(\eps_n)}_n$ the convergences
\begin{equation}
\label{Strongly}
\begin{split}
&\eps_n e(\duresn) \to 0 \quad \text{\emph{strongly} in } L^2(0,T;L^2(\Omega;\R^{d \times d}))
\quad\text{ and }\quad
\int_0^t\!\int_\Omega\theta_{\eps_n}:e(\dot u_{\eps_n})\,\mathrm{d}x\,\mathrm{d}s
\to0\,,\\
&\theta_\eps\to\Theta\quad \text{\emph{strongly} in } L^2(0,T)\times\Omega)
\end{split}
\end{equation}
will now be extracted from the  weak heat equation  \eqref{weak-heat-res}, 
using integration by parts and the information that $\Theta$ is constant in space.
It is in this connection that we need to further assume
homogeneous Dirichlet boundary conditions for the displacement
on the whole boundary $\partial \Omega$, cf.\ \eqref{Dir-b-c}.
%
\subsection{Convergence to local solutions of the rate-independent limit system}
\label{ss:6.3}
%
Let us mention in advance that in Theorem \ref{Thm6.1} we will prove that, up to a subsequence, the functions
$(\ures,\zres,\thres)$ converge to a limit triple $(u,z,\Theta)$ such that $\Theta$ 
is spatially constant.
As we will see, the pair $(u,z)$ fulfills the (pointwise-in-time) \emph{static}
momentum balance (i.e.\ without viscosity and inertia), a semistability condition
with respect to the energy $\calE$ arising from $\calE_\eps$ \eqref{mech-en-eps} in the
limit $\eps\downarrow 0$, and an energy inequality,
where the viscous, the inertial, and the thermal expansion  contributions are no longer present.
This inequality holds on $[0,t]$ \emph{for every} $t\in [0,T]$
in the general case, and on $[s,t]$ for all
$t \in [0,T]$ and almost every $s\in (0,t)$, under a
further condition on the gradient term in the energy~$\calE$, i.e.\ that $q>d$.
Indeed, the three properties (momentum balance, semistability, energy inequality)
constitute the notion of \emph{local solution} \cite{Miel08?DEMF, Roub13ACVB,RoThPa13SDLS}
to the rate-independent system driven by $\calR_1$ and $\calE$.
Observe that, in fact,  the spatially constant $\Theta$ does not appear in these relations, because
it contributes with a zero term to the momentum balance.

Moreover, testing the weak heat equation \eqref{weak-heat-res} with
functions $\eta$  that are  constant in space 
(which is the property of the limit temperature $\Theta$ by \eqref{apthetanabla-eps})
and taking into account the bounds
\eqref{Heps-heps}, \eqref{family-eps-below}, \eqref{aptheta1-eps},
and convergence \eqref{Strongly},   we find   in the limit relation $0=0$.
  This shows   that the temporal evolution of $\Theta$ 
is irrelevant in the rate-independent limit model.    In fact, in order to gain insight into the time evolution of  $\Theta$, we will perform
the limit passage in the heat equation \eqref{weak-heat-res} rescaled by the factor $1/\eps$ and
tested by $\eta\in H^1(0,T)$, constant in space.
 In this way, the heat-transfer term involving $\KK_\eps=\frac1{\eps^\beta}\KK$ will  disappear. 
    This will lead to   an ODE for the limit function $\Theta$, cf.\ \eqref{levico}.  Such an ODE 
 involves a \emph{defect} measure $\mu$, i.e.\ a Radon measure on $[0,T]$ arising in the limit of 
 the viscous dissipation term
 $\|\eps \DD(\zres,\thres)e(\dures):e(\dures)\|_{L^1(\Omega)}$, see \eqref{171208} below.

In the following proof, notice that Steps 0--3 can be proven for $\beta>0$, while in Step 4 we need $\beta\ge2$.
 Furthermore,  the   condition that  the tensor $\mathbb{B}$ is constant in space 
will have a crucial role in  handling    the thermal expansion term 
$\theta_{\eps}\,\BB:e(\dot{u}_\eps)$ in the rescaled heat equation, cf.\ \eqref{pistar} ahead.

\begin{thm}
\label{Thm6.1}
Assume \eqref{ass-dom}--\eqref{mono}, \eqref{assG}, \eqref{ass-data},
and, in addition, let
\eqref{Dir-b-c}, \eqref{scaling-K}  with $\beta\ge2$,  \eqref{Heps-heps},  and \eqref{weak-limit-bigF} be satisfied.
Let the initial data  ${(\ures^0,\dures^0,\zres^0,\thres^0)}_\eps$ fulfill
\eqref{assu-init}, \eqref{family-eps-below},
\begin{equation}
\label{converg-dureszero}
\eps \dures^0 \to 0 \quad \text{in } L^2(\Omega;\R^d)\,,
\end{equation}
and suppose that there exist 
$u_0 \in H_{\mathrm{D}}^1(\Omega;\R^d) $  and $z_0 \in \calZ$ such that
\begin{equation}
\label{convergence-initial-energies}
\ures^0 \weakto u_0 \text{ in } H_{\mathrm{D}}^1(\Omega;\R^d), \qquad
\zres^0 \weakto z_0 \text{ in } \calZ, \qquad 
 \E_\eps(0,\ures^0,\zres^0) \to
\E (0,u_0,z_0) \qquad \text{as } \eps \downarrow 0\,,
\end{equation}
with $\E_\eps$ as in \eqref{mech-en-eps}. 

Then, the functions ${(u_\eps,\zres,\thres)}_\eps$ converge
 (up to subsequences)
to a triple $(u,z,\Theta)$ such that
\begin{equation}
\label{limit-triple}
\begin{gathered}
u \in L^\infty (0,T; H^1_\mathrm{D}(\Omega;\R^d))\,, \qquad
z \in L^\infty (0,T; W^{1,q}(\Omega)) \cap L^\infty ((0,T)\times \Omega) \cap \BV ([0,T]; L^1(\Omega))\,,
\\
\Theta\text{ is constant in space and }\;
\Theta  \in
L^p (0,T) \quad\text{for any }p\in\left\{
\begin{array}{ll}
[1,8/3]&\text{if }d{=}3\,,\\
{[1,3]}&\text{if }d{=}2\,.
\end{array}
\right.
\end{gathered}
\end{equation}
The pair $(u,z)$ fulfills the unidirectionality as well as
\begin{compactenum}
\item
the semistability condition \eqref{semistab-general}
for  all $t\in [0,T]$, with the mechanical energy $\E$ defined as in~\eqref{mech-en-eps}
with $\bigFe$ replaced by
the weak limit $\bigF$  of the
sequence ${(\bigFe)}_\eps$, see~\eqref{weak-limit-bigF};
\item the weak momentum balance for  all $t\in [0,T]$
\begin{equation}
\label{stat-mombal} \int_\Omega  \CC(z(t)) e(u(t)) \psm e(v) \dd x =
\pairing{}{H^1_\mathrm{D}(\Om;\R^d)}{\bigF(t)}{v} \qquad\text{for all } v
\in H^{1}_\mathrm{D}(\Omega;\R^d)\,;
\end{equation}
\item 
the mechanical energy inequality for  all $t \in [0,T]$
\begin{equation}
\label{limit-mech-energy}
\begin{aligned}
&
\E(t,u(t),z(t))  + \integ{\Om}{(z(0){-}z(t))}{x} \leq \E(0,u(0),z(0))
+\int_0^t\partial_t\E(r,u(r),z(r))\,\mathrm{d}r\,;
\end{aligned}
\end{equation}
\end{compactenum}
If in addition the function $G$ fulfills the growth condition \eqref{G-growth} with $q>d$,
then $(u,z)$ also fulfill
\begin{equation}
\label{limit-mech-energy-better}
\begin{aligned}
&
\E(t,u(t),z(t))  + \integ{\Om}{(z(s){-}z(t))}{x} \leq \E(s,u(s),z(s))
+\int_s^t\partial_t\E(r,u(r),z(r))\,\mathrm{d}r
\end{aligned}
\end{equation}
for all $t\in [0,T]$ and for almost all $s\in (0,t)$.
\par
Moreover,
assume in addition that there exists $\widetilde H\in L^1(0,T)$ such that 
\begin{equation}
\label{additional-heat-sources}
\tfrac{1}{\eps}(\|H_\eps\|_{L^1(\Omega)}+\|h_\eps\|_{L^1(\partial\Omega)})
\rightharpoonup \widetilde H \quad \text{ in $L^1(0,T)$}. 
\end{equation}
Then, 
$\Theta$ fulfills
\begin{equation}
\label{levico}
\testw(t)\int_\Omega\Theta(t)  \dd x 
-\integlin{0}{t}{\dot\testw\integ{\Om}{\Theta}{x}}{s}
-\testw(0)\integ{\Omega}{\Theta(0)}{x}
=\int_0^t\eta\,\mathrm{d}\mu(s)
+\integlin{0}{t}{\testw\integ{\Om}{\mod{\dot z}}{x}}{s}
+\int_0^t\widetilde H\,\eta\,\mathrm{d}s\
\end{equation}
for a.a.\ $t\in(0,T)$ and for every $\eta\in H^1(0,T)$ constant in space, with 
the \emph{defect}  measure $\mu$ given by 
\begin{equation}\label{171208}
\|\eps \DD(\zres,\thres)e(\dures):e(\dures)\|_{L^1(\Omega)}\to \mu
\quad\text{ in the sense of Radon measures in $[0,T]$}\,. 
\end{equation}
\end{thm}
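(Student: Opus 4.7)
\medskip

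\textbf{Proof proposal.} The plan is to follow the strategy of Section \ref{s:5} (compactness $+$ semistability via mutual recovery sequences $+$ energy (in)equality), but now relying on the rescaled estimates of Proposition \ref{Apriori-eps} and on the scaling $\KK_\eps=\eps^{-\beta}\KK$ to extract both the limit mechanical system and the ODE for~$\Theta$.

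\emph{Step 1 (compactness and identification of $\Theta$).} From estimates \eqref{apDu-eps}, \eqref{ap-u-lin-linfty-eps}, \eqref{zeps-bv}--\eqref{ap-z-w1q-eps}, \eqref{apthetaH12-eps}--\eqref{apthetaLp-eps} I extract a (not relabeled) subsequence such that $u_\eps\weakstar u$ in $L^\infty(0,T;H^1_\mathrm{D}(\Omega;\R^d))$, $\eps\dot u_\eps\weakstar 0$ in $L^\infty(0,T;L^2(\Omega;\R^d))$, a Helly-type argument as in Proposition \ref{Conv} yields $z_\eps(t)\weakto z(t)$ in $W^{1,q}(\Omega)$ for every $t\in[0,T]$ (with $z$ monotone and $\calR_1$-$\BV$), and $\theta_\eps\weakto\Theta$ in $L^2(0,T;H^1(\Omega))$. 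The crucial point is that \eqref{apthetanabla-eps} gives $\|\nabla\theta_\eps\|_{L^2((0,T)\times\Omega)}\le C\eps^{\beta/2}\to 0$, so $\nabla\Theta\equiv 0$ and $\Theta=\Theta(t)$; combined with $\theta_\eps$ bounded in $L^p$ via \eqref{apthetaLp-eps}, Poincar\'e-Wirtinger plus an Aubin-Lions-type compactness give $\theta_\eps\to\Theta$ strongly in $L^2((0,T)\times\Omega)$.

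\emph{Step 2 (vanishing of $\eps\,e(\dot u_\eps)$ and of the thermal expansion work).} This is the technical heart of the argument. Testing the weak heat equation \eqref{weak-heat-res} with $\eta\equiv 1$ (which kills the $\KK_\eps$-term) and rearranging I obtain
\begin{equation*}
\int_0^t\!\!\int_\Omega \eps^2\DD(z_\eps,\theta_\eps)e(\dot u_\eps)\psm e(\dot u_\eps)\dd x\dd s
= \eps\!\int_\Omega(\theta_\eps(t){-}\theta_\eps^0)\dd x + \eps\!\int_0^t\!\!\int_\Omega\theta_\eps\,\BB\psm e(\dot u_\eps)\dd x\dd s - \eps\!\int_0^t\!\!\int_\Omega|\dot z_\eps|- \text{heat sources}.
\end{equation*}
By \eqref{Heps-heps}, \eqref{family-eps-below}, \eqref{zeps-bv}, \eqref{aptheta1-eps} the non-thermal-expansion terms on the right are $O(\eps)$. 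For the thermal expansion term, crucially using the \emph{full} Dirichlet condition \eqref{Dir-b-c} together with $\BB$ being constant, I subtract the spatial mean: $\int_\Omega\theta_\eps\,\BB\psm e(\dot u_\eps)\dd x=\int_\Omega(\theta_\eps-\bar\theta_\eps)\,\BB\psm e(\dot u_\eps)\dd x$ (since $\int_\Omega\BB\psm e(\dot u_\eps)=\int_{\partial\Omega}\BB\dot u_\eps\cdot\nu=0$), and Poincar\'e-Wirtinger plus Young absorb half of $\|\eps\,e(\dot u_\eps)\|_{L^2(L^2)}^2$ into the left-hand side at the price of $C\|\nabla\theta_\eps\|_{L^2(L^2)}^2=O(\eps^\beta)$. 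This yields $\|\eps\,e(\dot u_\eps)\|_{L^2((0,T)\times\Omega)}\to 0$, hence $\eps\,e(\dot u_\eps)\to 0$ strongly. As a by-product, the same Cauchy-Schwarz-Young estimate gives $\int_0^T\!\int_\Omega\theta_\eps\BB\psm e(\dot u_\eps)\dd x\dd t=o(\eps^{\beta/2-1})$, which tends to $0$ precisely when $\beta\ge 2$.

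\emph{Step 3 (passage to the limit in the momentum balance, semistability, and mechanical energy inequality).} In the rescaled momentum equation \eqref{e:weak-momentum-variational-res}, the inertial terms vanish since $\eps\dot u_\eps$ is bounded and they carry a prefactor $\eps$; the viscous term $\int\eps\,\DD(z_\eps,\theta_\eps)e(\dot u_\eps)\psm e(v)$ vanishes by the strong convergence just obtained and the $L^\infty$-bound on $\DD$; the thermal expansion term $\int\theta_\eps\,\BB\psm e(v)\to\Theta(t)\int_\Omega\BB\psm e(v)\dd x=0$ for every $v\in H^1_\mathrm{D}$ by the same integration-by-parts argument; finally the elastic term converges by weak-strong pairing (pointwise convergence of $\CC(z_\eps)$ via Lipschitz continuity and dominated convergence, together with weak convergence of $e(u_\eps)$). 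This yields \eqref{stat-mombal} pointwise in time. For semistability, I apply Theorem \ref{MRS} at every $t$ using the mutual recovery sequence \eqref{constr}; since all arguments of Section \ref{Semistab} are time-local and rely only on $z_\eps(t)\weakto z(t)$ in $W^{1,q}$ and $u_\eps(t)\weakto u(t)$ in $H^1_\mathrm{D}$ (the latter follows from \eqref{stat-mombal} and compactness), \eqref{semistab-general} follows. For the energy inequality, I take $\liminf$ in \eqref{mech-energy-eq-res}: the kinetic term $\tfrac{\eps^2}{2}\|\dot u_\eps(t)\|^2\ge 0$ is dropped, the viscous dissipation is nonnegative and dropped, the thermal work vanishes by Step 2, the initial energies converge by \eqref{convergence-initial-energies}--\eqref{converg-dureszero}, and the power $\int_0^t\partial_t\E_\eps$ passes to the limit by \eqref{weak-limit-bigF}; this gives \eqref{limit-mech-energy}. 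The improved inequality \eqref{limit-mech-energy-better} for a.e.\ $s$ requires $q>d$ so that $W^{1,q}(\Omega)\hookrightarrow\rmC^0(\overline\Omega)$ and one can start the discrete chain of inequalities from a continuity point of $z$ in time.

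\emph{Step 4 (ODE for $\Theta$ and defect measure).} The family of nonnegative Radon measures $\nu_\eps(\dd s):=\|\eps\,\DD(z_\eps,\theta_\eps)e(\dot u_\eps)\psm e(\dot u_\eps)\|_{L^1(\Omega)}\dd s$ on $[0,T]$ is uniformly bounded in total variation (by Step 2 its total mass is $O(1)$ from the heat-equation test with $\eta\equiv 1$), so up to a further subsequence $\nu_\eps\wtos\mu$ for some nonnegative $\mu$, yielding \eqref{171208}. To obtain \eqref{levico} I test \eqref{weak-heat-res} with $\eta\in H^1(0,T)$ regarded as spatially constant and divide by $\eps$: the $\KK_\eps$-term vanishes since $\nabla\eta=0$; the dissipation term converges to $\int_0^t\eta\,\dd\mu$ by definition of $\mu$; the thermal expansion term $\int_0^t\eta\int_\Omega\theta_\eps\,\BB\psm e(\dot u_\eps)$ vanishes by the refined Poincar\'e-Wirtinger--Young bound of Step 2 (this is exactly where $\beta\ge 2$ enters, exploiting the additional smallness $\|\eps\,e(\dot u_\eps)\|_{L^2(L^2)}=o(1)$ rather than $O(1)$); the rate-independent dissipation converges in the sense of measures by \eqref{measure-convergence} (which transfers to the $\eps$-family via the same argument as in Section \ref{s:5}); and the heat sources $\eps^{-1}(\int_\Omega H_\eps+\int_{\partial\Omega}h_\eps)$ converge weakly in $L^1(0,T)$ to $\widetilde H$ by \eqref{additional-heat-sources}. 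The LHS converges by the strong $L^2$-convergence of $\theta_\eps$. Collecting, \eqref{levico} follows. The hardest point throughout is controlling the thermal expansion work, which does not vanish from the estimates of Proposition \ref{Apriori-eps} alone and must be tamed by exploiting the full Dirichlet boundary condition, the spatial constancy of the limit $\Theta$, and the $\eps^{\beta/2}$ gain on $\nabla\theta_\eps$.
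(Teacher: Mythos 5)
Your overall strategy coincides with the paper's: compactness from Proposition \ref{Apriori-eps}, spatial constancy of $\Theta$ from the $\eps^{\beta/2}$-bound on $\nabla\theta_\eps$, vanishing of $\eps\,e(\dot u_\eps)$ extracted from the heat equation tested by $1$, limit passage in the momentum balance, semistability via Theorem \ref{MRS}, energy inequality by lower semicontinuity, and the ODE for $\Theta$ by dividing the heat equation by $\eps$ and testing with spatially constant $\eta$. Your treatment of the thermal-expansion work is a mild (and in fact slightly more quantitative) variant of the paper's: you subtract the spatial mean $\bar\theta_\eps$ and use Poincar\'e--Wirtinger together with $\int_\Omega\BB\psm e(\dot u_\eps)\,\mathrm{d}x=0$ (full Dirichlet condition, constant $\BB$), whereas the paper integrates by parts against $\div(\theta_\eps\BB)$, cf.\ \eqref{pistar} and \eqref{bella-1}--\eqref{bella-2}; both give the bound $o(\eps^{\beta/2-1})$, and you locate the role of $\beta\ge2$ exactly where the paper does. (Your Step 1 appeal to an ``Aubin--Lions-type'' argument for the strong $L^2$-convergence of $\theta_\eps$ tacitly needs a uniform time-regularity bound on the spatial means, e.g.\ a $\BV$-bound extracted from the rescaled heat equation; the paper is equally terse here, so I only note it.)

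There is, however, a genuine gap in your justification of \eqref{limit-mech-energy-better}. Passing to the limit in \eqref{mech-energy-eq-res} written on $[s,t]$ requires \emph{convergence} (not merely $\liminf$) of the stored energy at the left endpoint, $\E_{\eps_n}(s,u_{\eps_n}(s),z_{\eps_n}(s))\to\E(s,u(s),z(s))$, because this term sits on the right-hand side of the inequality, where lower semicontinuity goes the wrong way. This is where $q>d$ enters: it yields $\|z_{\eps_n}(s)-z(s)\|_{L^\infty(\Omega)}\to0$, so that the unilateral recovery sequence $\tilde z_{\eps_n}=\max\{0,\,z(s)-\|z_{\eps_n}(s)-z(s)\|_{L^\infty(\Omega)}\}$ is admissible in the $\eps$-semistability \eqref{semistab-general-res} and converges strongly in $W^{1,q}(\Omega)$, giving convergence of the gradient term $\calG$; combined with the strong convergence $e(u_{\eps_n})\to e(u)$ in $L^2((0,T)\times\Omega;\R^{d\times d})$ this yields the energy convergence at a.e.\ $s$. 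Your proposal contains neither ingredient: the phrase ``start the discrete chain of inequalities from a continuity point of $z$ in time'' does not address the endpoint-energy issue (continuity in time is not the obstruction), and you never establish the strong convergence of $e(u_{\eps_n})$, which in the paper is obtained by testing the rescaled momentum balance with $u_{\eps_n}$, a $\limsup$ argument, and Lemma \ref{appendix} (cf.\ \eqref{halleluja}, \eqref{enhanced-below}). Without these two steps the refined inequality \eqref{limit-mech-energy-better} is not proved; items 1--3 and the ODE \eqref{levico} are unaffected.
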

\begin{proof}
\textbf{Step $0$, compactness: } It follows from Proposition
\ref{Apriori-eps} that for every vanishing  sequence ${(\eps_n)}_n$
there exist a (not relabeled) subsequence and a triple $(u,z,\Theta)
$ as in \eqref{limit-triple} such  that the following convergences
hold
\begin{subequations}
\label{convs-eps}
\begin{alignat}{3}
\label{convu-0-eps}
\uresn &\weaksto\; && u&&\text{ in }L^\infty(0,T;H^1_\mathrm{D}(\Omega;\R^d)) \,,
\\
\label{convu-eps-added}
\eps_n \uresn&\weaksto&& 0&& \text{ in }W^{1,\infty}(0,T;L^2(\Omega;\R^d))\,,
\\
\label{convoz-eps}
\zresn &\weaksto&& z&&\text{ in }L^\infty(0,T;W^{1,q}(\Omega)) \cap L^\infty ((0,T) \times \Omega)\,,
\\
\label{convoz-ptw-eps-weal}
\zresn(t) &\weakto&& z(t)\; &&\text{ in }W^{1,q}(\Omega)
\quad \text{for all } t \in[0,T]
\\
\label{convoz-ptw-eps}
\zresn(t) &\to&& z(t) &&\text{ in }L^r(\Omega)
\quad \text{for all } 1 \leq r <\infty \text{ and for all } t \in[0,T]\,,
\\
\label{convotheta-eps}
\thresn&\rightharpoonup&&\Theta 
&&\text{ in }L^2(0,T;H^1(\Omega)) \cap L^p ((0,T) \times \Omega)
\text{ for all $p$ as in \eqref{apthetaLp-eps}}\,.
\end{alignat}
\end{subequations}
Indeed, \eqref{convu-0-eps} ensues from \eqref{apDu-eps}, and it gives, in particular, that
$\eps_n \uresn \to 0$ in $L^\infty(0,T;H^1_\mathrm{D}(\Omega;\R^d))$.
Then, convergence \eqref{convu-eps-added} directly follows
from estimate \eqref{ap-u-lin-linfty-eps}.
Convergences  \eqref{convoz-eps}--\eqref{convoz-ptw-eps}  ensue
from the very same compactness arguments as in the proof of
Proposition \ref{Conv}, also using the Helly Theorem.
Furthermore, \eqref{convotheta-eps} follows from estimates
\eqref{apthetaH12-eps}--\eqref{apthetaLp-eps} by weak compactness.
Observe that in view of \eqref{apthetanabla-eps} we have that
\begin{equation}
\label{true2}
\nabla \thresn \to 0 \quad \text{ in }
L^2(0,T; L^2(\Omega;\R^d))\,.
\end{equation}
Therefore, we conclude that
$ \nabla\Theta=0 $ a.e.\ in $(0,T)\times \Omega$. Since $\Theta$ is spatially constant,
hereafter we will write it as a function of the sole variable $t$.

We now prove the enhanced convergence
\begin{equation}
\label{omg}
\thresn \to \Theta\;\text{in }L^2(0,T;L^2(\Omega))\,.
\end{equation}
In fact, we use
the  Poincar\'e inequality
\[
\|\thresn-\Theta\|_{L^2(0,T;L^2(\Omega))}\leq
\|\nabla(\thresn-\Theta)\|_{L^2(0,T;L^2(\Omega;\R^d))}
+C(\Omega,T)\left|\int_0^T\!
\int_\Omega(\thresn-\Theta)\,\mathrm{d}x\,\mathrm{d}s\right|\longrightarrow0\,,
\]
where
the gradient term   tends to $0$ by \eqref{true2},
and the convergence of the second term follows from \eqref{convotheta-eps}.

Finally, let us show that
\begin{equation}
\label{strongly}
\eps_n \, e(\duresn) \to 0 \quad \text{\emph{strongly} in } L^2(0,T;L^2(\Omega;\R^{d \times d}))\,.
\end{equation}
Preliminarily, observe that,
since  $\BB$ and  the limit function $\Theta$  are constant in space, we have by
 integration by parts
\begin{equation}
\label{pistar}
\int_0^t\!\int_\Omega\Theta\,\BB:e(\duresn)\,\mathrm{d}x\,\mathrm{d}s
= \int_0^t\!\int_{\partial\Omega}\Theta\,\BB\,\nu\cdot\duresn\,\mathrm{d}\calH^{d-1}(x)\,\mathrm{d}s
-\int_0^t\!\int_\Omega\div(\Theta\,\BB)\cdot \duresn\,\mathrm{d}x\,\mathrm{d}s=0\,,
\end{equation}
where we used $\partial_\mathrm{D}\Omega =\partial\Omega$,
hence $\duresn \in L^{ 2} (0,T; H_{\mathrm{D}}^1 (\Omega;\R^d))$
implies that $\duresn =0$ a.e.\ in $(0,T) \times \partial\Omega$.
Using \eqref{pistar} in the weak heat equation
\eqref{weak-heat-res}
tested by $1$ and applying Young's inequality, we find
\begin{equation}
\label{halle}
\begin{split}
\eps_n\left( \int_\Omega (\thresn(t)-\thresn^0) \, \mathrm{d} x \right)
&\geq \int_0^t\!\int_\Omega \left[ \eps_n^2\DD (\zresn,\thresn)
 e(\duresn): e(\duresn)-\eps_n(\thresn{-}\Theta\,\BB):e(\duresn) \right]
\,\mathrm{d}x\,\mathrm{d}s\\
&\geq \int_0^t\!\int_\Omega\eps_n^2\tfrac{C_\DD}{2}|e(\duresn)|^2\,\mathrm{d}x\,\mathrm{d}s
-C \|\thresn-\Theta\|_{L^2(0,T;L^2(\Omega))}^2\,
\end{split}
\end{equation}
with $C = |\BB|/2$.
From this, taking into account that ${(\thresn^0)}_n$
is bounded in $L^1(\Omega)$ by \eqref{family-eps-below},
estimate~\eqref{aptheta1-eps} for ${(\thresn)}_n$, and
convergence \eqref{omg},
we conclude that
$\lim_{\eps_n \downarrow 0} \eps_n \| e(\duresn) \|_{L^2 (0,T; L^2(\Omega;\R^{d \times d}))} = 0$,
whence \eqref{strongly}.

In fact, by Korn's inequality we conclude  that
\begin{equation}
\label{correct?}
\eps_n \uresn \to 0 \quad \text{in } H^1 (0,T; H^1_\mathrm{D}(\Omega;\R^d))\,.
\end{equation}
\par
\textbf{Step $1$, passage to the limit in the momentum
balance \eqref{e:weak-momentum-variational-res}: }
Convergence \eqref{correct?}, joint with the boundedness
\eqref{assCD-3} of the tensor $\DD$, ensures that the  first and the
second summands on the left-hand side of
\eqref{e:weak-momentum-variational-res}  tend to zero.
Arguing as in the proof of Proposition \ref{prop:Mombal}, we show that for every test function
$v$ in \eqref{e:weak-momentum-variational-res},
$\CC(\zresn) e(v) \to \CC(z) e(v)$ in $L^2 ((0,T)\times \Omega;\R^{d \times d})$. We combine this with
\eqref{convu-0-eps}  and, also using
\eqref{convotheta-eps}, we pass
to the limit in the third term on the left-hand side
of \eqref{e:weak-momentum-variational-res},
 recalling that the fourth summand converges to zero similarly to \eqref{pistar}.
As for the right-hand side,
by \eqref{family-eps-below} we have
\begin{equation}
\label{quoted-below-zero}
\eps_n^2 \duresn^0 \to 0 \qquad \text{ in $L^2(\Omega;\R^{d})$,}
\end{equation}
hence the first term converges to zero. The second one
tends to zero for almost all $t \in (0,T)$ by \eqref{convu-eps-added}, which in particular gives
\begin{equation}
\label{quoted-below-dot}
\eps_n^2 \duresn \to 0 \qquad \text{ in $L^\infty (0,T; L^2(\Omega;\R^d)).$}
\end{equation}
For the third one, we use \eqref{weak-limit-bigF}.
We thus conclude that  \eqref{stat-mombal} holds at almost all $t \in (0,T)$.

In order to check it at \emph{every} $t\in [0,T]$, we observe
that for every $t\in [0,T]$ from the bounded sequence ${(\uresn(t))}_n$
(along which convergences \eqref{convs-eps} hold)
we can extract a subsequence, possibly depending on~$t$,
weakly converging to some $\bar{u}(t)$ in $H^1_{\mathrm{D}}(\Omega;\R^d)$.
Relying on  convergence \eqref{convoz-ptw-eps} for ${(\zresn(t))}_n$
and on~\eqref{weak-limit-bigF} for $(\bigF_{\eps_n}(t))$,
with the same arguments as above we conclude that
$\int_\Omega  \CC(z(t)) e(\bar{u}(t)) \psm e(v) \dd x =
\pairing{}{H^1_\mathrm{D}(\Om;\R^d)}{\bigF(t)}{v} $ for all $ v
\in H^{1}_\mathrm{D}(\Omega;\R^d)$. Since this equation has a unique solution, we conclude that
$\bar u(t) = u(t)$ for almost all $t\in (0,T)$, and that
the \emph{whole} sequence $\uresn(t)$ weakly converges to $\bar u(t)$ for every $t\in [0,T]$.
In this way  $u$ extends to a function defined on $[0,T]$,
such that
\begin{equation}
\label{weak-ptw-u}
\uresn(t) \weakto u(t) \quad \text{in } H^{1}_\mathrm{D}(\Omega;\R^d) \quad \text{for all } t \in [0,T]\,,
\end{equation}
solving  \eqref{stat-mombal} at all $t \in [0,T]$.

\par
\textbf{Step $2$, enhanced convergences for ${(\uresn)}_n$: }
As a by-product of this limit passage, we also extract
convergences  \eqref{enhanced-below} and \eqref{halleluja} below for ${(\uresn)}_n$,
which we will then use in the
passage to the limit in the semistability and in the mechanical energy inequality.
Indeed, we test \eqref{e:weak-momentum-variational-res} by $\uresn$,
thus obtaining
\[
\begin{aligned} 
& 
\limsup_{n \to \infty}
\integlin{0}{t}{\!\integ{\Omega}{\left( \CC(\zresn)  e(\uresn) {-}\thresn\,\BB\right)
\psm e(\uresn)}{x}}{s}
\\
&
\leq   \limsup_{n \to \infty} \eps_n^2\integlin{0}{t}{\!\integ{\Omega}{|\duresn|^2}{x}}{t}
- \liminf_{n\to \infty} \integlin{0}{t}{\!\integ{\Omega}{\eps_n\DD(\zresn,\thresn)e(\duresn)
\psm e(\uresn)}{x}}{s}
\\
& \phantom{\le}
+ \limsup_{n\to\infty}\eps_n^2\integ{\Omega}{\duresn^0 \ps \uresn^0}{x}
-  \liminf_{n\to \infty}  \eps_n^2\integ{\Omega}{\duresn(t) \ps \uresn(t)}{x}
+ \limsup_{n\to\infty}\integlin{0}{t}{\pairing{}{H^1_\mathrm{D}(\Om;\R^d)}{\bigF_{\eps_n}}{\uresn}}{s}
\\
&
=0+0+0+0 + \integlin{0}{t}{\pairing{}{H^1_\mathrm{D}(\Om;\R^d)}{\bigF}{u}}{s}
= \integlin{0}{t}{\!\integ{\Omega}{ \CC(z)  e(u) \psm e(u)}{x}}{s}
\end{aligned}
\]
where the first term
 in the right-hand side
converges to zero thanks to \eqref{correct?}, the second one by
the boundedness of $\DD$,
\eqref{convu-0-eps},  and \eqref{correct?}, the third one by \eqref{quoted-below-zero}
combined with the boundedness of ${(\uresn^0)}_n$, the fourth one by
\eqref{convu-0-eps} and  \eqref{quoted-below-dot}.
The fifth term passes to the limit by \eqref{weak-limit-bigF} and \eqref{convu-0-eps}.
The last identity follows from  \eqref{stat-mombal}. 
Remark that the second term in the left-hand side converges to zero by
\eqref{convu-0-eps} and \eqref{convotheta-eps}, as done for \eqref{pistar}.

From the above chain of inequalities we thus obtain that
\[
\limsup_{n \to \infty} \integlin{0}{t}{\!\integ{\Omega}{ \CC(\zresn) e(\uresn)
\psm e(\uresn)}{x}}{s}
\leq
\integlin{0}{t}{\!\integ{\Omega}{  \CC(z) e(u)  \psm   e(u)}{x}}{s}.
\]
Next, we may apply Lemma~\ref{appendix}
to deduce that $e(\uresn)$ strongly converges
to $e(u)$ in $L^2( (0,T) \times \Omega;\R^{d \times d}) $, see also Lemma \ref{strongerconv}. Hence, by Korn's inequality, we ultimately infer
\begin{equation}
\label{halleluja}
\uresn \to u \quad \text{ in } L^2 (0,T;H^1_\mathrm{D}(\Omega;\R^d))\,.
\end{equation}
For later convenience, we observe that, in particular, this yields
\begin{equation}
\label{enhanced-below}
\integ{\Omega}{ \CC(\zresn(t))   e(\uresn(t))  \psm e(\uresn(t))}{x} \to
\integ{\Omega}{ \CC(z(t))   e(u(t))  \psm e(u(t))}{x} \qquad \foraa t \in (0,T)\,.
\end{equation}
\par
\textbf{Step $3$, passage to the limit in the
semistability condition: }
In view of the pointwise convergences
 \eqref{convoz-ptw-eps-weal}--\eqref{convoz-ptw-eps}
for $\zresn$ and
$\uresn(t) \to u(t)$ in $H^1_\mathrm{D}(\Omega;\R^d)$
(by \eqref{halleluja}) for  all $t\in [0,T]$, we may apply the mutual recovery sequence
construction from Theorem \ref{MRS}
in order to pass to the limit as $\eps_n \downarrow 0$ in the semistability \eqref{semistab-general-res}.
Also taking into account convergence \eqref{weak-limit-bigF} for
${(\bigF_{\eps_n})}_n$, we  conclude
that $(u,z)$ comply with the semistability condition \eqref{semistab-general} for every $t\in [0,T]$.
\par
\textbf{Step $4$, passage to the limit in the mechanical energy inequality on $(0,t)$: }
By lower semicontinuity it follows from
convergences \eqref{weak-limit-bigF}, \eqref{weak-ptw-u},
\eqref{convoz-ptw-eps-weal}, and  \eqref{convoz-eps} that
\begin{equation}
\label{energy-liminf}
\liminf_{n \to \infty}\E_{\eps_n} (t,\uresn(t),\zresn(t)) \geq \E(t,u(t),z(t))
\qquad \text{for all } t \in [0,T]\,.
\end{equation}
Furthermore,
combining \eqref{weak-limit-bigF} with \eqref{convu-0-eps} we infer that
\begin{equation}
\label{power-cvrg}
\partial_t \E_{\eps_n} (t,\uresn,\zresn) =
- \pairing{}{H^1_\mathrm{D}(\Om;\R^d)}{\dot{\bigF}_{\eps_n}(t)}{\uresn}  \to
- \pairing{}{H^1_\mathrm{D}(\Om;\R^d)}{\dot{\bigF}(t)}{u} = \partial_t \E(t,u,z)
\quad \text{ in } L^2 (0,T) \,.
\end{equation}
We are now in a position to pass to the limit in the mechanical
energy inequality  \eqref{mech-energy-eq-res}.
 We notice that the first  term
on the left-hand side of \eqref{mech-energy-eq-res} is positive. 
For the second one we use \eqref{energy-liminf}
and the  third  one converges to $\int_\Omega (z(0)- z(t)) \dd x $ by
\eqref{convoz-ptw-eps}.
The fourth one,  given by
$$
\integlin{0}{t}{\!\integ{\Omega}{\left(\eps\,\DD(\zres,\thres)e(\dures) 
{-} \thres \,\BB \right)\psm e(\dures)}{x}}{s},
$$ 
is bounded from below by
$$
-\int_0^t\!\int_\Omega\thresn\,\BB:e(\duresn)\,\mathrm{d}x\,\mathrm{d}s.
$$
We can again argue as in \eqref{pistar}
\begin{equation}
\label{bella-1}
\begin{aligned}
\int_0^t\!\int_\Omega\thresn\,\BB:e(\duresn)\,\mathrm{d}x\,\mathrm{d}s
& = \int_0^t\!\int_{\partial\Omega}\thresn\,\BB\,\nu\cdot\duresn\,\mathrm{d}\calH^{d-1}(x)\,\mathrm{d}s
-\int_0^t\!\int_\Omega\div(\thresn\,\BB)\cdot \duresn\,\mathrm{d}x\,\mathrm{d}s
\\
& =0 -\int_0^t\!\int_\Omega\div(\thresn\,\BB)\cdot \duresn\,\mathrm{d}x\,\mathrm{d}s\,,
\end{aligned}
\end{equation}
where we have used that $\duresn$ complies with homogeneous Dirichlet conditions
on $\partial_{\mathrm{D}}\Omega=\partial\Omega$,
and then observe that
\begin{equation}
\label{bella-2}
\| \div(\thresn\,\BB)\cdot \duresn \|_{L^1 ((0,T) \times \Omega)}
= \|\eps_n^{-1} \div(\thresn\,\BB)\cdot \eps_n\duresn \|_{L^1 ((0,T) \times \Omega)}
\leq C\|\eps_n\duresn \|_{L^2 ((0,T) \times \Omega)} \to 0\,,
\end{equation}
due to estimate \eqref{apthetanabla-eps} and \eqref{correct?}.
 Notice that here we have used the fact that $\beta\ge2$;
this is the only point where we use such requirement. 
As for the right-hand side, we observe that
the first term  converges to zero
by \eqref{converg-dureszero}.
The second term passes to the limit by  the convergence
\eqref{convergence-initial-energies} for the initial energies, and the  third  one by \eqref{power-cvrg}.

 Therefore we conclude that
$$
\E(t,u(t),z(t))+\int_\Omega (z(0)- z(t)) \dd x\le \E(0,u(0),z(0))+\int_0^t\partial_t \E(s,u,z)\mathrm{d}s\,.
$$
\par
\textbf{Step $5$, case $q>d$, enhanced convergence for $(\zresn)$ and energy convergence: }
We now prove that
\begin{equation}
\label{vorrei}
\lim_{n \to \infty} \int_\Omega G(\zresn(t),\nabla \zresn(t)) \dd x
=  \int_\Omega G(z(t),\nabla z(t)) \dd x \qquad \foraa t \in (0,T)\,,
\end{equation}
which, combined with  \eqref{weak-limit-bigF},
\eqref{enhanced-below} and \eqref{halleluja} will yield the pointwise convergence of the energies
\begin{equation}
\label{energy-convergence}
\lim_{n \to \infty}\E_{\eps_n} (t,\uresn(t),\zresn(t)) =  \E(t,u(t),z(t))
\qquad \foraa t \in (0,T)\,.
\end{equation}

We obtain \eqref{vorrei} testing semistability \eqref{semistab-general-res}
by a suitable recovery sequence ${(\tilde{z}_{\eps_n} )}_n $  for  $\tilde z = z(t)$;
in the following lines, to avoid overburdening notation we will
drop $t$ when writing $\zresn(t)$, $z(t)$, $\uresn(t)$, and $u(t)$.
Following \cite[Lemma 3.9]{MiRou06}, where the recovery sequence right below has
been introduced to deduce energy convergence, we set
\begin{equation*}
\tilde{z}_{\eps_n}:= \max\{ 0, z- \|z_{\eps_n} - z\|_{L^\infty(\Omega)}\} \,.
\end{equation*}
Now,  for $q>d$ the convergence $z_{\eps_n} \weakto z$ in $W^{1,q}(\Omega)$, see \eqref{convoz-ptw-eps-weal},
implies $z_{\eps_n} \to z $ in $L^\infty(\Omega)$.
Thus, it can be checked that
\begin{equation}
\label{strong-w1q}
\tilde{z}_{\eps_n} \to z  \text { \emph{strongly} in  } W^{1,q}(\Omega) \,.
\end{equation}
Since $\tilde{z}_{\eps_n} \leq \zresn$, we can choose it as a test function
in \eqref{semistab-general-res}. The term
$ -
\pairing{}{H^1_\mathrm{D}(\Om;\R^d)}{\bigF_{\eps_n}(t)}{\uresn}$
on both sides of the inequality cancels out and we deduce
\begin{equation}
\label{fretta-1}
\begin{aligned}
&
\limsup_{n \to \infty}
\left( \integ{\Om}{(\tfrac12 \CC(\zresn) e(\uresn) \psm
e(\uresn)+G(\zresn,\nabla \zresn))}{x} \right)
\\
& = \limsup_{n \to \infty}
\left( \integ{\Om}{\tfrac12 \CC(\tilde z_n) e(\uresn) \psm
e(\uresn)}{x}
 + \integ{\Om}{G(\tilde{z}_{\eps_n},\nabla \tilde{z}_{\eps_n})}{x} \right)
\leq I_1 +I_2\,,
\end{aligned}
\end{equation}
where
\begin{equation*}
I_1 := \lim_{n \to \infty}
 \integ{\Om}{\tfrac12 \CC(\tilde z_n) e(\uresn) \psm
e(\uresn)}{x} \leq
\integ{\Om}{\tfrac12 \CC( z) e(u) \psm e(u)}{x}\,,
\end{equation*}
combining \eqref{strong-w1q} with \eqref{halleluja}
via the Lebesgue Theorem. It follows from \eqref{strong-w1q},
condition \eqref{G-growth} on the growth of $G$ from above,
and again the Lebesgue Theorem that
\begin{equation}
\label{fretta-3}
I_2 : =  \lim_{n \to \infty}  \int_{\Omega} G(\tilde{z}_{\eps_n}, \nabla \tilde{z}_{\eps_n}) \,
\mathrm{d} x =  \int_{\Omega} G({z}, \nabla z) \, \mathrm{d} x\,.
 \end{equation}
Taking into account  the previously proven \eqref{enhanced-below},
from \eqref{fretta-1}--\eqref{fretta-3}
we ultimately infer
\[
\limsup_{n \to \infty}  \integ{\Om}{G(\zresn,\nabla \zresn))}{x} \leq \integ{\Om}{G(z,\nabla z))}{x}\,,
\]
whence \eqref{vorrei}.
\par
\textbf{Step $6$, case $q>d$, passage to the limit in the mechanical energy inequality on $(s,t)$: }
We now pass to the limit in \eqref{mech-energy-eq-res}
written on an interval $[s,t] \subset [0,T]$, for every $t\in [0,T]$ and almost all $s\in (0,t)$.
Clearly, it is sufficient to discuss the limit passage on the
right-hand side of \eqref{mech-energy-eq-res}, evaluated at $s$.
The first summand tends to zero for almost all $s$, thanks to \eqref{correct?}, which
in particular ensures $\eps_n \duresn(s) \to 0$ in $L^2(\Omega;\R^d)$
for almost all $s\in (0,T)$. The second term passes to the limit  by
\eqref{energy-convergence}, while the third and the fourth ones can be dealt with
by \eqref{bella-1}--\eqref{bella-2} and \eqref{power-cvrg}, respectively.
\par
 \textbf{Step $7$, limit passage in the rescaled heat equation and temporal evolution of $\Theta$: }
We consider the heat equation  \eqref{weak-heat-res} rescaled by the factor $1/\eps$ and
tested by $\eta\in H^1(0,T)$, constant in space, which results in
\begin{equation}
\label{weak-heat-res2}
\begin{aligned}
&\testw(t) \int_\Omega \thres(t) \dd x 
-\integlin{0}{t}{\dot\testw \integ{\Om}{\thres}{x}}{s}
\\
&=\testw(0)\integ{\Omega}{\thres^0}{x}
+\integlin{0}{t}{\testw\integ{\Omega}{\left(\eps\,\DD(\zres,\thres) e(\dures)
{-} \thres\,\BB \right)\psm e(\dures)}{x}}{s}\\
&\phantom{=}+\integlin{0}{t}{\testw\integ{\Om}{\mod{\dzres}}{x}}{s}
+\tfrac{1}{\eps}\integlin{0}{t}{\testw\integ{\partial\Om}{\hse}{\acca^{d-1}(x)}}{s}
+\tfrac{1}{\eps}\integlin{0}{t}{\testw\integ{\Om}{\hve}{x}}{s} \,.
\end{aligned}
\end{equation}
From  the mechanical energy balance \eqref{mech-energy-eq-res}
we deduce by a comparison argument that
\begin{equation*}
\eps\int_0^T\!\int_\Omega\DD(\zres,\thres)e(\dures):e(\dures)\,\mathrm{d}x\,\mathrm{d}s\leq C\,,
\text{ hence also }\;
\eps\int_0^T\!\!\!\eta\int_\Omega\DD(\zres,\thres)e(\dures):e(\dures)\,\mathrm{d}x\,\mathrm{d}s
\leq C\|\eta\|_\infty
\end{equation*}
for every $\eta\in H^1(0,T)$,
taking into account \eqref{weak-limit-bigF}, \eqref{family-eps-below} as well as \eqref{Strongly}.
This allows us to conclude that
 there exists a Radon measure $\mu$ such that  \eqref{171208} holds.
A comparison argument in \eqref{weak-heat-res2} leads to
\begin{equation*}
\left|\eps\int_0^t\testw\int_\Omega\,\theta_\eps\,\BB:e(\dot u_\eps)\,\mathrm{d}x\,\mathrm{d}s\right|
\leq C\|\eta\|_\infty\,,
\end{equation*}
also in view of the bounds \eqref{Heps-heps}, \eqref{apthetaLp-eps} and \eqref{zeps-bv}.
Since $\eta$ is constant in space, integration by parts and an argument along the lines of Step 4 
yield that indeed
$\int_0^t\int_\Omega\eta\,\theta_\eps\,\BB:e(\dot u_\eps)\,\mathrm{d}x\,\mathrm{d}s\to0$.
Moreover, the third convergence in \eqref{Strongly} implies
that $\theta_\eps(t)\to\Theta(t)$ in $L^2(\Omega)$ for a.e.\ $t\in(0,T)$.
Using  \eqref{additional-heat-sources}, we finally pass to the limit in \eqref{weak-heat-res2} and  find that  $\Theta$ satisfies \eqref{levico}.  
\end{proof}

\bigskip
\paragraph{\bf Acknowledgments.}
This work has been supported by  the Italian Ministry of Education, University, and Research
through the PRIN 2010-11 grant for the project \emph{Calculus of Variations}, by  the
European Research Council through the two Advanced Grants
\emph{Quasistatic and Dynamic Evolution Problems in Plasticity and Fracture (290888)}
and \emph{Analysis of Multiscale Systems Driven by Functionals (267802)},
and by GNAMPA (Gruppo Nazionale per l'Analisi Matematica, la
Probabilit\`a e le loro Applicazioni) of  INdAM  (Istituto Nazionale
di Alta Matematica)  through the project \emph{Modelli variazionali per la propagazione di fratture, 
la delaminazione e il danneggiamento}.
G.L.\ acknowledges also the support of the University of W\"urzburg, of the DFG grant SCHL 1706/2-1, 
of SISSA, of the University of Vienna, and of the FWF project P27052.
This paper was submitted on October 24, 2014; the first referee report was received by the authors on December 6, 2017.
\newpage
\bigskip
{\small
\bibliographystyle{alpha}
\bibliography{LRTT_bib}
}
\end{document}